\documentclass[12pt]{amsart}
\oddsidemargin=0in \evensidemargin=0in 
\textwidth=6.5in \textheight=8.5in

\usepackage{amsfonts, amssymb, amscd, latexsym, graphicx, psfrag, color, float, enumitem}
\usepackage[all]{xy}
\usepackage{mathrsfs}
\usepackage[mathscr]{euscript}
\usepackage{mathabx}
\usepackage{tikz}

\usepackage[hyphens]{url}

\usepackage{todonotes}
\usepackage{hyperref}

\usepackage[hyphenbreaks]{breakurl}

\usepackage{tkz-euclide}
\usepackage{caption}


\newtheorem{dummy}{dummy}[section]
\newtheorem{lemma}[dummy]{Lemma}
\newtheorem{theorem}[dummy]{Theorem}

\newenvironment{customthm}[1]
{\innercustomthm}
  {\endinnercustomthm}

\newtheorem{corollary}[dummy]{Corollary}
\newtheorem{proposition}[dummy]{Proposition}
\newtheorem*{theorem*}{Theorem}
\newtheorem{definition}[dummy]{Definition}
\newtheorem*{definition*}{Definition}
\newtheorem{example}[dummy]{Example}

\newtheorem{remark}[dummy]{Remark}


\newcommand{\mHP}{\mathrm{HP}}
\newcommand{\mHHl}{\mathrm{HH}^{\mathrm{lin}}}

\newcommand{\bfA}{\mathsf{A}}
\newcommand{\bA}{\mathbb{A}}
\newcommand{\bC}{\mathbb{C}}
\newcommand{\bG}{\mathbb{G}}
\newcommand{\bL}{\mathbb{L}}

\newcommand{\bZ}{\mathbb{Z}}

\newcommand{\cC}{\mathcal{C}}

\newcommand{\cF}{\mathcal{F}}
\newcommand{\cO}{\mathcal{O}}
\newcommand{\cL}{\mathcal{L}}

\newcommand{\cK}{\mathcal{K}}
\newcommand{\cCK}{\mathcal{CK}}
\newcommand{\cX}{\mathcal{X}}

\newcommand{\cHH}{\mathcal{HH}}
\newcommand{\cHHl}{\mathcal{HH}^{\mathrm{lin}}}
\newcommand{\cH}{\mathcal{H}}

\newcommand{\cHP}{\mathcal{HP}}
\newcommand{\cHPl}{\mathcal{HP}^{\mathrm{lin}}}


\newcommand{\ft}{\mathfrak{t}}
\newcommand{\fg}{\mathfrak{g}}

\newcommand{\Map}[2]{\underline{\mathrm{Map}}\left (#1,#2\right )}


\newcommand{\dSt}{\mathrm{dSt}}

\newcommand{\Aff}[1]{\mathrm{Aff}(#1)}
\newcommand{\Spec}{\mathrm{S}\mathrm{pec}\,}

\newcommand{\DR}{\mathrm{DR}}

\newcommand{\El}{\mathcal{E}ll}
\newcommand{\an}{\mathrm{an}}

\newcommand{\Mod}{\mathrm{Mod}}
\newcommand{\Tot}{\mathrm{Tot}}



\newcommand{\Perf}{\mathrm{Perf}}
\newcommand{\Frac}{\mathrm{Frac}\,}

\newcommand{\Qcoh}{\mathrm{Qcoh}}
\newcommand{\QCoh}{\mathrm{QCoh}}
\newcommand{\Sym}{\mathrm{Sym}}

\newcommand{\Coh}{\mathrm{Coh}}

\setcounter{tocdepth}{1}

\begin{document}

\author[Tomasini]{Paolo Tomasini}
\address{Paolo Tomasini, SISSA\\
Via Bonomea 265\\ 34136 Trieste TS\\
Italy}
\email{\href{mailto:ptomasin@sissa.it}{ptomasin@sissa.it}}

\title[Adelic Descent for Equivariant Elliptic Cohomology]{Adelic Descent for Equivariant Elliptic Cohomology}

\begin{abstract}
We define $k$-rationalized $G$-equivariant elliptic cohomology, for a field of characteristic zero $k$ and a compact Lie group $G$, via adelic descent. We also give adelic descriptions of rationalized $G$-equivariant singular cohomology and K-theory. This completes a program first proposed by Ro\c{s}u. These descriptions are then used to obtain comparison results with periodic cyclic cohomologies defined via derived algebraic geometry.  
\end{abstract}

\maketitle

\tableofcontents

\section{Introduction}
A   powerful method to study complexified  equivariant cohomology theories is to regard them as \emph{quasi-coherent} sheaves of algebras on the decompletion of the formal group associated to their non-equivariant incarnation. Based on this principle,  Grojnowski  proposed a first construction of complexified equivariant elliptic cohomology in   \cite{groj}. At around the same time Ginzburg--Kapranov--Vasserot  gave an axiomatic description of equivariant elliptic cohomology in \cite{GKV}.  
The details of Grojnowski's construction were worked out by Ro\c{s}u in \cite{Ros1}, who applies similar methods in \cite{Ros3} to build complexified equivariant K-theory. The upshot is that elliptic cohomology and K-theory can be constructed  complex-analytically from the equivariant cohomology of fixed point loci. In this same paper, Ro\c{s}u states that such a description of complexified equivariant K-theory can also be obtained algebraically, via completions. Clearly, a byproduct of this construction would be a lift from the complexification to rationalization with respect to a general field of characteristic zero. Unfortunately, Ro\c{s}u never completed this program. 

This is what we are set to do in this paper: our goal is to obtain a purely algebraic description of equivariant K-theory and of Grojnowski's equivariant elliptic cohomology via adelic methods. In the last forty years elliptic cohomology has been intensely studied for its importance in homotopy theory --- for example Greenlees' approach via his algebraic model for rational $G$-spectra \cite{GreenRat} and Ganter's \cite{Gan14} --- and its relevance in mathematical physics and the theory of representations of loop groups (see Witten's \cite{WitQuant,WitDir} for the relationship with string theory and Dirac operators on loop spaces, or Ando's \cite{Ando} that highlights the relationship with loop groups). Nevertheless, the original question by Ro\c{s}u of a purely algebraic construction of its rationalized equivariant version remains unanswered. 

We will achieve this description via \emph{adelic descent}. Adelic descent has its origins in algebraic number theory and algebraic geometry, as a tool to study curves. One of the earliest applications  of adelic methods is a celebrated theorem of Andr\'e Weil, that describes principal $G$-bundles on curves in terms of the adelic ring of the curve. This theory was generalized to $n$-dimensional Noetherian schemes by Parshin \cite{ParAd} and Beilinson \cite{BeilAd}. Broadly speaking, adelic descent yields a description of coherent sheaves on Noetherian schemes in terms of their completion at chains of points on the scheme. A recent formulation    is due to Groechenig \cite{Groech}, who proved an \emph{adelic reconstruction theorem} in terms of an equivalence of $\infty$-categories of perfect complexes on a Noetherian scheme $X$ and perfect complexes on the adelic decomposition of $X$.

\subsection*{Rationalized equivariant elliptic cohomology}
In Section \ref{section:adelicGroj} we apply  adelic descent to study $G$-equivariant elliptic cohomology for compact Lie groups $G$.  Our approach is encoded in Definition \ref{def:elln} and Definition \ref{def:ellnG}. The key case is when $G=T$ is a torus. Here the construction proceeds by induction on the rank of $T$.  
If $k=\bC$ is the field of the complex numbers, our definition recovers Grojnowski's: this is the content of Theorem \ref{thm:equivGroj}.
Finally, we provide a chain-level presentation of Grojnowski's sheaf: this is carried out in the remarks in Subsection \ref{ssec:chains}. This presentation uses the same inductive construction of Definition \ref{def:elln} and rests on the formality of the algebra $C^\ast(BT)$ of cochains on the classifying space of $T$.

\subsection*{Adelic descent for equivariant cohomology and K-theory}
In Section \ref{section:adelicsingK} we complete Ro\c{s}u's program by giving a description of rationalized equivariant K-theory in terms of  adelic descent data, which is incarnated in Theorem \ref{thm:equivRosu}. This shows that rationalized equivariant K-theory can be constructed in a purely algebraic manner out of the equivariant cohomology of fixed loci, thus  implementing Ro\c{s}u's proposal. Our construction has several advantages, with respect to Ro\c{s}u's original paper \cite{Ros3}: it works over all fields of characteristic zero, and it recovers equivariant K-theory directly, rather than its extension by the holomorphic functions over the complexification of the torus.

We perform similar computations also for equivariant cohomology.
In particular this shows that, up to rationalization, equivariant elliptic cohomology, equivariant K-theory and equivariant cohomology can ultimately be built out of some basic local data --- always expressed in terms of Borel-equivariant singular cohomology of fixed loci --- and an induction based on localization theorems. This implements the philosophical picture determined by the Atiyah--Segal completion theorem.

\subsection*{Comparisons with periodic cyclic counterparts}
In Section \ref{section:geometry} we compare rationalized equivariant cohomology and K-theory with \emph{periodic cyclic homology theories} constructed respectively from the \emph{shifted tangent bundle} and the \emph{derived loop space} of a quotient stack.
The case of rationalized equivariant K-theory was considered by Halpern-Leistner--Pomerleano in \cite{HLPom}. There, they establish an equivalence between the periodic cyclic homology of quotient stacks $[X/G]$ of smooth quasi-projective schemes over $\bC$ admitting a semi-complete KN stratification and the $G$-equivariant topological K-theory of the analytification of $X$. 
Here we use adelic descent to prove the following theorem:
\begin{customthm}{A}
[Theorem \ref{thm:HPKG}]
\label{ctmthm:introKth}
Let $X$ be a smooth quasi-projective variety over $\bC$ acted on by a reductive group $G$. There is an equivalence of $\bZ_2$-periodic coherent sheaves on the GIT adjiont quotient $G//G$
$$\pi_\ast\cHP([X/G])\simeq\cK_{G^{\an}}(X^{\an})$$
which is natural in $X$ with respect to $G$-equivariant maps. Here, the left-hand side is periodic cyclic homology and the right-hand side is $G^\an$-equivariant K-theory of the analytification of $X$.
\end{customthm}

Our approach differs from that of \cite{HLPom}, as our proof makes use only of more elementary mathematics --- mainly adelic descent and localization theorems. Localization theorems in equivariant K-theory are quite classical and date back to the work of Segal \cite{SegK}, while localization in periodic cyclic homology is due to Chen \cite{HChen}.
Our techniques also allow us to drop the hypothesis of having a semi-complete KN stratification on the quotient $[X/G]$.

Halpern-Leistner and Pomerleano remark that their theorem follows from an identification at the level of cochains. Theorem \ref{ctmthm:introKth} has a similar lift, which requires a cochain model for equivariant K-theory. Such a model can be constructed again using adelic descent with the same arguments made in the elliptic setting in Section \ref{section:adelicGroj}. Such chain-level lifts appear in Proposition \ref{thm:HPKchains} (for algebraic tori) and Remark \ref{remark:HPKGchains} (for reductive $G$).

As a corollary of Theorem \ref{ctmthm:introKth} and Theorem 2.10 of \cite{HLPom}, we obtain a version of the lattice conjecture for categories of perfect complexes over quotient stacks of varieties by actions of reductive groups. Corollary \ref{cor:lattice} provides such version of the lattice conjecture, i.e. the existence of a rational structure on the periodic cyclic homology of a DG-category over the complex numbers. Also in our situation the rational structure comes from Blanc's topological K-theory \cite{Blanc}.

We remark that similar comparison results are known in differential geometry since the 90's. In this context, the equivariant K-theory of a smooth manifold can be recovered from the periodic cyclic homology of the algebra of $C^\infty$-functions on the manifold itself. Relevant references include \cite{BrylCycHom} and \cite{BlockGetz}.

A similar picture holds in the case of equivariant cohomology, where the \emph{shifted tangent stack} takes on the role played by the loop space in the case of K-theory. This story is well-known, and follows immediately from work of Pantev--To\"en--Vaqui\'e--Vezzosi \cite{PTVV} and Calaque--Pantev--To\"en--Vaqui\'e--Vezzosi \cite{CPTVV}. We include it in the paper, as we give a different argument based on  adelic methods. This allows to treat on the same footing equivariant elliptic cohomology, equivariant K-theory and equivariant cohomology. 

\begin{customthm}{B}[Theorem \ref{thm:HPlHG}]
Let $X$ be a smooth quasi-projective variety acted on by a reductive group $G$. There is an isomorphism of $\bZ_2$-periodic perfect complexes on the GIT adjoint quotient of the Lie algebra $\fg//G$
$$\pi_\ast\cHPl([X/G])\simeq\cH_{G^{\an}}(X^{\an})$$
where the right-hand side is $G^\an$-equivariant cohomology of the analytification of $X$, and the left-hand side is \emph{linearized} periodic cyclic homology (i.e. the Tate fixed points of the mixed structure on the de Rham complex of $[X/G]$).
\end{customthm}

A necessary requirement for the proof via adelic descent is a \emph{localization formula} for the shifted tangent stack, Proposition \ref{prop:shiftedloc}. To the best of the author's knowledge, this localization phenomenon does not appear in the literature and might be of independent interest. Localization for the loop space has been extensively studied in \cite{HChen}.

The case of equivariant elliptic cohomology has been investigated in \cite{MapStI}, where the geometric object taking the role of the loop space and the shifted tangent stack is an appropriate derived stack of \emph{quasi-constant} maps from an elliptic curve $E$ over a field of characteristic zero. The appropriate notion of \emph{elliptic periodic cyclic homology} is introduced, and proved to be equivalent to Grojnowski's equivariant elliptic cohomology when $k=\bC$. Thus our results in this paper, together with \cite{MapStI}, provide a unified treatment of (rationalized) equivariant elliptic cohomology, equivariant K-theory and equivariant cohomology and their comparisons with corresponding invariants defined via derived algebraic geometry. 

{\bf Acknowledgements:} 
I would like to thank Nicol\`o Sibilla for numerous discussions we had on the topic of this paper, and for reading a preliminary draft. I deeply thank Pavel Safronov for pointing out some issues with a previous version of Section \ref{ssection:HPlin} and Lemma \ref{lemma:trivaction} and for clarifying the role of $B\widehat{\bG}_a$.
I also thank Joost Nuiten, Emanuele Pavia and Bertrand To\"en for important discussions on topics related to the content of this paper.

\section{Preliminaries}
\label{section:preliminaries}

For the preliminary section of this paper we refer to the Preliminaries section in the paper \cite{MapStI}. For the reader's convenience, we review some  basic material.

\subsection{Complexified Equivariant Elliptic Cohomology}\label{subsection:PerlimGroj} 
Complexified equivariant elliptic cohomology was axiomatically defined by Ginzburg--Kapranov--Vasserot in \cite{GKV} and constructed by Grojnowski in \cite{groj}.
We follow mostly the more recent exposition found in \cite{Gan14} and \cite{SchSib}. Other reviews closer in style to the original can be found in \cite{And1}, \cite{Ros1} and \cite{GreenRat}. We remark that Grojnowski's paper only sketches the construction, but the details were carried out by Ro\c{s}u in \cite{Ros1}. 

Let $X$ be a finite $T$-CW-complex, where $T$ is real torus of rank $n$. Complex $T$-equivariant elliptic cohomology of $X$ is defined by first constructing a coherent sheaf of $\bZ_2$-graded algebras $\El^{\an}_T(X)$ over the complex manifold 
$$E_T := E\otimes_{\bZ}\check{T}$$
and then viewing it as an algebraic coherent sheaf via standard GAGA arguments, yielding
$$
\El_T(X) \in \Qcoh(E_T)^{\mathbb{Z}_2}
$$ 
The construction we review is as a $\bZ_2$-periodic rather than $\bZ_2$-graded coherent sheaf. The two constructions are completely equivalent.

Grojnowski's main insight is that, rationally, equivariant elliptic cohomology is locally given by equivariant cohomology of loci in $X$ which are fixed by some subgroups of $T$ indexed by points of $E_T$.
\begin{definition}\label{definition:T(x)}
Let $e$ be a point of $E_T$.  Let $S(e)$ be the set of subgroups $K\subset T$ such that the closure of $e$, $\overline{\{e\}}$, belongs to $E_{K}\subset E_T$. Then we define 
$$T(e):=\bigcap_{K\in S(e)}K$$
and 
$$T'(e):=T/T(e)$$
\end{definition}


Let 
$$H_{T}^{\oplus,\ast}(X)=\bigoplus_{i\in\bZ}H_T^{\ast+2i}(X;\bC)$$
be the \emph{sum}-$\bZ_2$-periodization of $T$-equivariant singular cohomology of $X$, with complex coefficients.
This is a module over the even $T$-equivariant cohomology of the point $$H_T^{\oplus,0}(\ast)=\bC[u_1,\dots,u_n]$$ or equivalently a quasi-coherent sheaf over $\Spec H^{\oplus,0}_T(\ast)\simeq\ft_{\bC}\simeq\bA^{n}_{\bC}$, where $\ft_{\bC}$ is the complexified Lie algebra of $T$. Let us call $\mathcal{H}_T(X)$ this quasi-coherent sheaf. 
\begin{remark}
Under our assumptions, $\cH_T(X)$ is a \emph{coherent} sheaf on $\ft_\bC$.
\end{remark}
We denote by $\cH_T^{\an}(X)$ the analytification, i.e. the coherent sheaf 
$$\mathcal{H}_T^{\an}(X)=\mathcal{H}_T(X)\otimes_{\cO_{\ft_\bC}}\cO_{\ft_\bC}^\an$$

There is a quotient map
$$\mathrm{exp^2}:\ft_\bC\to E_T$$
which is an isomorphism if restricted to sufficiently small analytic disks $U$ in $E_T$. Let us call $\mathrm{log^2}$ the local inverse. Moreover, the group structure (we use multiplicative notation) on $E_T$ induces translation maps
\begin{align*}
\tau_e: & E_T\to E_T \\ & f\mapsto fe
\end{align*}
for all closed points $e$ in $E_T$.
Then, for a closed point $e\in E_T$ and a sufficiently small analytic neighbourhood $U_e$ of $e$, we set
$$\El_T^\an(X)|_{U_e}=(\tau_e\circ\mathrm{exp^2})_{\ast}\cH_T^{\an}(X^{T(e)})|_{\mathrm{log^2}(e^{-1}U_e)}$$
The algebraic coherent sheaf obtained from $\El_T^{\an}(X)$ via GAGA is denoted $\El_T(X)$.

The completions of Grojnowski's sheaf over closed points $e$ of $E_T$ can be expressed in terms of the \emph{product}-$\bZ_2$-periodization of $T$-equivariant singular cohomology:
$$\El_T(X)_{\widehat{e}}=H_{T}^{\prod,\ast}(X^{T(e)})\simeq H_{T}^{\oplus,\ast}(X^{T(e)})\otimes_{\cO(\ft_\bC)}\cO_{E_T,\widehat{e}}$$
where $\cO_{E_T,\widehat{e}}$ is a module over $\cO(\ft_\bC)$ via the completed multiplication map $\widehat{\mu}_e:E_{T,\widehat{1}}\to E_{T,\widehat{e}}$ and the identification $E_{T,\widehat{1}}\simeq \ft_{\bC,\widehat{0}}$.

\subsection{Adelic descent}
\label{sec:adelic}
We now review adelic descent theory for $n$-dimensional schemes. This theory was first introduced by Parshin \cite{ParAd} and Beilinson \cite{BeilAd}. A review of this theory can be found in \cite{HubAd} and \cite{MorAd}. Recently, Groechenig \cite{Groech} made a very relevant contribution to the theory. His paper is the main reference for the short reminder that follows. 

Let $X$ be a Noetherian scheme. For two points $x$ and $y$ we say $x\geq y$ if $y\in\overline{\{x\}}$.
We let $|X|_k$ denote the set of \emph{$k$-chains} on $X$, i.e. sequences of $k+1$ ordered points $(x_0\geq\dots\geq x_k)$ in $X$. If $k=0$, we equivalently write $|X|=|X|_0$.
Finally, for a subset $T\subset|X|_k$, we call
$$_{x}T:=\{\Delta\in |X|_{k-1}|(x\geq \Delta)\in T\}$$

We are now ready to define sheaves of ad\`eles on $X$ for a choice of $T\subset|X|_{k}$. The ad\`eles are the unique family of exact functors of abelian categories 
$$\bfA_{X}(T,-):\QCoh(X)\to\Mod_{\cO_X}$$ 
such that:
\begin{itemize}
	\item $\bfA_{X}(T,-)$ commutes with directed colimits;
	\item if $\cF$ is coherent and $k=0$, $\bfA_{X}(T,\cF)=\prod_{x\in T}\lim_{r\geq 0}\tilde{j}_{rx}\cF$;
	\item if $\cF$ is coherent and $k>0$, $\bfA_{X}(T,\cF)=\prod_{x\in|X|}\lim_{r\geq 0}\bfA_{X}(_{x}T,\tilde{j}_{rx}\cF)$.
\end{itemize}

The notation we use is borrowed from \cite{MorAd}. $\tilde{j}_{rx}$ denotes the functor $j_{rx\,\ast}j_{rx}^{\ast}$, where 
$$j_{rx}:\Spec \cO_{X,x}/\mathfrak{m}_x^r\to X$$
is the canonical immersion of an $r$-thickening of the point $x$. $\cO_{X,x}$ is the local ring at $x$ and $\mathfrak{m}_x$ its maximal ideal.

We denote by $\bA_{X}(T,\cF)$ the global sections $\Gamma(X, \bfA_{X}(T,\cF))$, and call them \emph{the groups of ad\`eles}. 

The sets $|X|_k$ admit the structure of a simplicial set by defining face and degeneracy maps by the operations of removing a point from a chain or doubling one up respectively. Let this simplicial set be denoted by $|X|_{\bullet}$. This implies that the sheaves of ad\`eles assemble into a cosimplicial sheaf of $\cO_{X}$-modules $\bfA_{X}(T_\bullet,\cF)$, for some $T_\bullet\subset|X|_\bullet$. In the case $T_\bullet=|X|_\bullet$, this cosimplicial sheaf is denoted by $\bfA_{X}^{\bullet}(\cF)$, and its global sections by $\bA_{X}^{\bullet}(\cF)$. If $\cF=\cO_X$, the notation we reserve is $\bfA_{X}^{\bullet}$ and $\bA_{X}^{\bullet}$ respectively.

It is possible to consider the cosimplicial sheaf of ``products of local ad\`eles''
$$[n]\mapsto\prod_{\Delta\in|X|_n}\bfA_{X}(\Delta,\cF)$$
i.e. we choose $T=\{\Delta\}$ and then take the product over all chains $\Delta$.
The content of Theorem 2.4.1 in \cite{HubAd} is that the natural inclusion of the ad\`eles into this product respects the cosimplicial structures.

Adelic descent theory allows to reconstruct sheaves from their \emph{adelic descent data}, i.e. the data of the sheaves of ad\`eles or their global sections. We state two theorems on adelic descent for Noetherian n-dimensional schemes. The first one is due to Groechenig and holds for perfect complexes in the context of small $\infty$-categories.

\begin{theorem}[Theorem 3.1 in \cite{Groech}]
Let $X$ be a Noetherian scheme. Then adelic reconstruction is an equivalence of symmetric monoidal $\infty$-categories
$$\Perf^{\otimes}(X)\simeq\Tot\Perf^{\otimes}(\bA_{X})$$
\end{theorem}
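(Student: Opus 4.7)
The plan is to construct the candidate functor $\Perf(X) \to \Tot \Perf(\bA_{X})$ by sending a perfect complex $\mathcal{E}$ to the cosimplicial object $[n] \mapsto \bA_{X}^{n}(\mathcal{E}) = \Gamma(X, \bfA_{X}^{n}(\mathcal{E}))$, and then to establish separately that it is fully faithful and essentially surjective, before upgrading it to a symmetric monoidal equivalence.

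Fully faithfulness will rely on the classical Beilinson--Parshin theorem that, for any quasi-coherent sheaf $\cF$ on a Noetherian scheme, the cosimplicial sheaf $\bfA_{X}^{\bullet}(\cF)$ is a resolution of $\cF$ by flasque $\cO_{X}$-modules; in particular the totalization of the global adelic complex $\bA_{X}^{\bullet}(\cF)$ computes $R\Gamma(X,\cF)$. Applied to the quasi-coherent sheaf of internal homomorphisms between two perfect complexes $\mathcal{E}$ and $\cF$, this identifies the mapping space $R\Hom_{\cO_{X}}(\mathcal{E},\cF)$ with the totalization of the cosimplicial diagram of mapping spaces in $\Perf(\bA_{X}^{n})$, which is precisely the mapping space in $\Tot \Perf(\bA_{X})$.

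For essential surjectivity I would proceed by induction on $\dim X$. In dimension zero, $X$ is a finite disjoint union of spectra of Artinian local rings and the cosimplicial ring $\bA_{X}^{\bullet}$ essentially degenerates, so the claim is immediate. For the inductive step, I would choose a dense open $U \subset X$ with complement $Z$ of strictly smaller dimension, and decompose $k$-chains in $X$ according to whether they lie entirely in $U$ or pass through $Z$. By the inductive hypothesis a descent datum on $\bA_{X}$ then produces a perfect complex on $U$, a compatible perfect complex on the formal completion of $X$ along $Z$, and an identification of the two on the punctured formal neighbourhood; formal gluing at the level of perfect complexes reassembles these into an object of $\Perf(X)$ that recovers the original datum.

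The main obstacle is essential surjectivity, and specifically verifying that the reconstructed object is perfect rather than merely quasi-coherent; this requires controlling finite presentation through the completions and localizations entering the definition of $\bfA_{X}^{\bullet}$, using Noetherianness in an essential way to ensure that the formal gluing step stays inside $\Perf$. The symmetric monoidal enhancement follows once the underlying equivalence is in place, because $\cF \mapsto \bfA_{X}^{\bullet}(\cF)$ is manifestly compatible with tensor products of quasi-coherent sheaves and therefore lifts automatically to a symmetric monoidal functor into the cosimplicial category of modules over $\bA_{X}^{\bullet}$.
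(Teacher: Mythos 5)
You should first note that the paper does not actually prove this statement: it is quoted, with attribution, as Theorem 3.1 of Groechenig's \emph{Adelic descent theory} and used as a black box, so there is no internal proof to compare against. The relevant comparison is with Groechenig's own argument, and your outline does parallel its architecture: full faithfulness via Beilinson's flasque adelic resolution, and essential surjectivity by induction on dimension combined with formal glueing along a closed subset.

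As a proof, however, your proposal has genuine gaps, because the steps you treat as routine are exactly the technical content. For full faithfulness you silently need that the levelwise functor $\Perf(X)\to\Perf(\bA_X^{n})$ is base change along a \emph{flat} $\cO_X$-algebra, i.e. $\bfA_X^{n}(\cF)\simeq\cF\otimes_{\cO_X}\bfA_X^{n}$ for perfect $\cF$ with $\bfA_X^{n}$ flat; flatness of the ad\`eles is a theorem, not a formality, and it is also what lets the adelic functor commute with internal $\Hom$ of perfect complexes and what justifies the monoidal enhancement. Your claim that $\cF\mapsto\bfA_X^{\bullet}(\cF)$ is ``manifestly'' compatible with tensor products fails on all of $\QCoh$ (completion does not commute with arbitrary tensor products); it holds on $\Perf$ precisely because of flat base change. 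You also need to pass from Beilinson's abelian-category resolution to a statement about totalizations for bounded complexes of internal Homs, which requires a convergence/boundedness argument. For essential surjectivity, ``formal gluing at the level of perfect complexes reassembles these'' is the crux rather than a consequence of the inductive hypothesis: one needs a Beauville--Laszlo-type fibre square $\Perf(X)\simeq\Perf(\widehat{X}_Z)\times_{\Perf(W)}\Perf(U)$ for Noetherian $X$, with $W$ the punctured formal neighbourhood, and, separately, an identification of $\Tot\Perf(\bA_X)$ with the corresponding fibre product obtained by sorting chains according to whether they lie in $U$ or terminate in $Z$. Both are substantial theorems in Groechenig's paper, and without them (together with some care about the induction when $X$ has large or infinite Krull dimension) the sketch does not close.
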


The following theorem due to Beilinson appears as Theorem 1.16 in \cite{Groech}. This is a classical theorem for the abelian category of quasi-coherent sheaves.
\begin{theorem}[Beilinson \cite{BeilAd}]
Let $\cF$ be a quasi-coherent sheaf on $X$. The augmentation $\cF\to\bfA_{X}^{\bullet}(\cF)$ is a resolution of $\cF$ by flasque $\cO_X$-modules. In particular, the totalization of the ad\`eles $\Tot \bA_{X}^{\bullet}(\cF)$ computes the cohomology of $\cF$.
\end{theorem}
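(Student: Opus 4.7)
The plan is to split the statement into the two independent claims: \textbf{(a)} each sheaf $\bfA_X^n(\cF)$ is flasque, and \textbf{(b)} the augmented complex $\cF \to \bfA_X^\bullet(\cF)$ is exact.

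For \textbf{(a)} I would argue by induction on the simplicial degree $n$, using the recursive definition of the ad\`eles recalled just before the theorem. When $n=0$, each factor $\lim_{r}\tilde{j}_{rx}\cF$ is an inverse limit of pushforwards along the closed immersions $j_{rx}$ of Artinian local schemes; such pushforwards are visibly flasque (their sections over any two opens containing $x$ agree and the restriction maps are isomorphisms), and the transition maps in $r$ are surjective on these sections, so the inverse limit remains flasque. Arbitrary products of flasque sheaves are flasque. For $n>0$, the recursion $\bfA_X(T,\cF)=\prod_{x\in|X|}\lim_r\bfA_X({}_xT,\tilde{j}_{rx}\cF)$ combines the inductive hypothesis with the same closure properties.

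For \textbf{(b)} I would first reduce to the case of coherent $\cF$: since $\bfA_X(T,-)$ commutes with directed colimits, exactness of the augmented complex propagates from coherent subsheaves to any filtered colimit, and on a Noetherian scheme every quasi-coherent sheaf is such a colimit. For coherent $\cF$ I would run a Noetherian induction on $\dim\mathrm{supp}(\cF)$. The base case, where $\mathrm{supp}(\cF)$ consists of closed points, is essentially formal: the augmentation $\cF\to\prod_x\widehat{\cF}_x$ is an isomorphism and the higher terms collapse to an acyclic complex by a direct verification. For the inductive step I would filter $\bfA_X^\bullet(\cF)$ by the codimension of the leading point of each chain; at each point $x$, the associated graded piece is (a shift of) the adelic complex of a coherent sheaf supported on the proper closed subscheme $\overline{\{x\}}$, to which the inductive hypothesis applies.

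The main obstacle lies in the filtration argument in \textbf{(b)}: the cosimplicial face maps of $\bfA_X^\bullet$ interleave completions at points of different dimensions, and one must verify that the associated graded really does split as a direct sum of lower-dimensional adelic resolutions. The cleanest route is a stalk-wise computation at each point $x$, using that applying $\lim_r j_{rx,\ast}j_{rx}^\ast$ to a coherent sheaf produces, on an affine neighborhood, the formal completion along the maximal ideal $\mathfrak{m}_x$, which is faithfully flat over the localization. Once this is verified, the cosimplicial identities match the \v{C}ech-type structure of a cover by completed local rings along chains, and the exactness of $\cF \to \bfA_X^\bullet(\cF)$ follows from faithfully flat descent combined with the inductive hypothesis on smaller strata.
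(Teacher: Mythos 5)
First, a structural point: the paper does not prove this statement at all --- it is Beilinson's theorem, quoted via Groechenig (Theorem 1.16 in \cite{Groech}) and originally due to \cite{BeilAd}, with the full argument in Huber \cite{HubAd}. So there is no internal proof to compare with, and your proposal has to be judged on its own. Part (a) is essentially the standard argument and is fine: each $\tilde{j}_{rx}\cF$ has sections $\cF_x\otimes\cO_{X,x}/\mathfrak{m}_x^r$ on every open containing $x$ and $0$ elsewhere, so it is flasque, the limit over $r$ stays flasque because the restriction maps are isomorphisms or zero, and products (and, for quasi-coherent $\cF$, directed colimits over a Noetherian space) preserve flasqueness. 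One inaccuracy: $j_{rx}$ is not a closed immersion unless $x$ is a closed point ($\Spec\cO_{X,x}/\mathfrak{m}_x^r$ maps onto the single point $x$, not onto $\overline{\{x\}}$); your actual flasqueness argument does not use this, so it is harmless, but it should not be stated.

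Part (b) is where the genuine gap sits, and it is exactly the hard core of Beilinson--Huber's proof. The reduction to coherent $\cF$ and the zero-dimensional base case are fine, but the filtration step does not close as described. Filtering by the leading point $x$ of a chain, the graded piece at $x$ is $\lim_r$ of the adelic complex of $\tilde{j}_{rx}\cF$; this sheaf is quasi-coherent but \emph{not} coherent (for $x$ non-closed it is essentially the pushforward of the stalk $\cF_x/\mathfrak{m}_x^r\cF_x$ spread constantly along $\overline{\{x\}}$), and when $x$ is a generic point of $\mathrm{supp}(\cF)$ the closure $\overline{\{x\}}$ has the \emph{same} dimension as $\mathrm{supp}(\cF)$, so your Noetherian induction on $\dim\mathrm{supp}$ of coherent sheaves does not apply to these pieces; one also has to justify commuting the inverse limit over $r$ with exactness (a Mittag--Leffler argument) and to invoke the nontrivial compatibility of ad\`eles with closed subschemes and with supports (Huber's key lemmas) to even identify the graded pieces as adelic complexes on $\overline{\{x\}}$. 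Finally, the closing appeal to faithfully flat descent is not a proof: the adelic complex is not the Amitsur/\v{C}ech complex of a faithfully flat cover --- already for a curve the degree-zero term is $\cF_\eta\times\prod_x\widehat{\cF}_x$ while the degree-one term is a restricted product of $\cF_\eta\otimes\Frac\cO_{E,\widehat{x}}$-type factors, not a two-fold tensor power --- so exactness in the key local step is an explicit approximation-type computation (e.g.\ $\cO_{X,x}=\widehat{\cO}_{X,x}\cap\cO_{X,\eta}$ inside $\Frac\widehat{\cO}_{X,x}$ in dimension one), not formal descent. To repair the outline you would need to follow Huber's dévissage: reduce to integral $X$ and to the structure sheaf, treat the generic-point factor explicitly, and run the induction on dimension through the local rings rather than through $\dim\mathrm{supp}$ of the graded pieces.
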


\begin{remark}
The arguments made by Groechenig in \cite{Groech} hold without variations in the context of $\bZ_2$-periodic sheaves. This is the context we are interested in, as we are dealing with $\bZ_2$-periodic cohomology theories.
\end{remark}

\subsection{Shifted tangent bundles and loop spaces}\label{ssec:stbdls}
In this subsection we recall   two fundamental objects in derived algebraic geometry that will be used in the second part of this paper. For general references on derived algebraic geometry in the context of $E_\infty$-rings, see Lurie's work \cite{HTT}, \cite{HA} and \cite{SAG}. In the context of simplicial commutative rings/cdgas the theory has been developed by To\"en and Vezzosi in \cite{HAGI} and \cite{HAGII}. For a short review, see the preliminaries section of \cite{MapStI}.

Let $X$ be a derived stack, and $\bL_X$ be its cotangent complex.
\begin{definition}
The \emph{shifted tangent stack} of $X$ is the derived stack 
$$T_{X}[-1]:=\Spec_{\cO_X}\Sym\,\bL_X[1]$$
\end{definition}
\begin{definition}
The \emph{derived loop space} of $X$ is the stack
$$\cL X:=\Map{S^1}{X}$$
\end{definition}
In the above, $\Map{-}{-}$ denotes the mapping stack as derived stacks.
\begin{remark}
Since there is an equivalence $S^1\simeq \ast\coprod_{\ast\coprod\ast}\ast$, we have that 
$$\cL X\simeq X\times_{X\times X}X$$
\end{remark}
The algebra of (derived) global sections of the structure sheaf of $S^1$ is formal over a field of characteristic zero $k$. In particular, over $k$ this algebra is isomorphic to $k[\epsilon]$, where $\epsilon$ is a variable in cohomological degree one. In particular, the cosimplicial spectrum in the sense of \cite{ToenPhD}, $\Spec k[\epsilon]=\Spec \cO(S^1)$, is the \emph{affinization} of the circle $S^1$ (see also \cite{MRT} for more on affinization and affine stacks). This allows us to introduce a third object:
\begin{definition}
The \emph{unipotent loop space} of $X$ is the stack
$$\cL^u X:=\Map{\Spec k[\epsilon]}{X}$$
\end{definition}

If $X$ is a scheme over $k$, Ben-Zvi and Nadler establish a Zariski codescent result for the loop space, Lemma 4.2 in \cite{BZNLoopConn}, which has the consequence that for schemes the three objects described above coincide (Proposition 4.4 in \cite{BZNLoopConn}), as they do for affine schemes. In particular, taking global sections gives a form of the HKR theorem, as the global sections of the loop space compute Hochschild homology and the global sections of the shifted tangent bundle compute the de Rham complex of $X$.

We will be interested in the shifted tangent bundle and in the derived loop space of quotient stacks. In the following few lines, we review the case of classifying stacks. 
\begin{example}
Let $G$ be a smooth affine reductive algebraic group over $k$. The shifted tangent bundle of $BG$, $T_{BG}[-1]$, is given by 
$$T_{BG}[-1]\simeq [\fg/G]$$
where $\fg$ is the Lie algebra of $G$ and $G$ acts on $\fg$ via the adjoint representation. In particular, the affinization of the shifted tangent is given by the GIT quotient, $\fg//G$.
This example can be found in \cite{BZNLoopConn}. It depends on the fact that the cotangent complex of $BG$, pulled back along $a:\Spec k\to BG$, is given by:
$$a^\ast\bL_{[\Spec k/G]}\simeq \fg^{\vee}[-1]$$
where $\fg^{\vee}$ is the dual to the Lie algebra $\fg$.
The shift places $\fg^{\vee}$ in degree zero, so that in the end 
$$\Sym\,\bL_{BG}[1]\simeq\Sym(\fg^{\vee})^{G}$$
\end{example}

\begin{example}
Let $G$ be a smooth affine reductive algebraic group over $k$. The derived loop space of $BG$, $\cL BG$, is given by 
$$\cL BG\simeq [G/G]$$
where $G$ acts on itself via the adjoint action. In particular, the affinization of the derived loop space is given by the GIT quotient, $G // G$. This follows because the derived loop space of $BG$ classifies $G$-local systems on the circle, which are given exactly by $[G/G]$. For more details, see \cite{BZNLoopConn}.
\end{example}

\begin{remark}
Let $G$ be a compact Lie group.
By work of Atiyah--Bott \cite{AtiBott} we have:
$$\Spec H_G^{\oplus,0}(\ast;\bC)\simeq \fg_\bC//G_\bC$$
where $\fg_\bC$ and $G_\bC$ denote the complexifications of the Lie algebra of $G$ and of $G$ itself respectively. 
Similarly, we have
$$\Spec K_G^0(\ast)\otimes_{\bZ}\bC\simeq G_\bC//G_\bC$$
In particular, over the complex numbers, 
$$\Aff{T_{BG}[-1]}\simeq\Spec H_{G^c}^{\oplus,0}(\ast)$$
$$\Aff{\cL BG}\simeq\Spec K_{G^c}^0(\ast)$$
Here, $G^c$ is the maximal compact subgroup of $G$ ($G$ is the complexification of $G^c$).
\end{remark}

\section{The Adelic Decomposition of Equivariant Elliptic Cohomology}\label{section:adelicGroj}
In this section we define $k$-rationalized $T$-equivariant elliptic cohomology, where $k$ is a field of characteristic zero, for finite $T$-CW complexes. The main point in the construction is the localization theorem, which allows us to describe this object inductively via its adelic descent data. When the torus is $S^1$, the adelic descent data for $k$-rationalized equivariant elliptic cohomology can be described in terms of Borel equivariant singular cohomology with coefficients in $k$.
\subsection{The rank one case}
We begin with a definition of $k$-rationalized $S^1$-equivariant elliptic cohomology of finite $T$-CW-complexes. 

\begin{definition}\label{def:ell1}
Let $k$ be a field of characteristic zero and $E$ be an elliptic curve over $k$, and let $X$ be a finite $T=S^1$-CW-complex. We define $k$-rationalized $T$-equivariant elliptic cohomology as the coherent sheaf $\El_T(X)$ on $E$ such that:
\begin{itemize}
\item For a closed point $e\in E$, $\El_T(X;k)_{\widehat{e}}=H_{T}^{\oplus, \ast}(X^{T(e)};k)\otimes_{\cO(\ft)}\cO_{E,\widehat{e}}$;
\item $\El_T(X;k)_{\widehat{\eta}}=H^{\oplus, \ast}(X^T;k)\otimes_{k}\cO_{E,\eta}$;
\item $\El_T(X;k)_{\widehat{(\eta>e)}}=H^{\oplus, \ast}(X^T;k)\otimes_{k}\Frac\cO_{E,\widehat{e}}$.
\end{itemize}
The (reduced) cosimplicial structure is induced by the cosimplicial structure on the ad\`eles for $E$ and pullback and change of group maps in equivariant singular cohomology:
\begin{itemize}
\item the map $\El_T(X;k)_{\widehat{\eta}}\to\prod_{e\in |E|_{cl}}\El_T(X;k)_{\widehat{(\eta>e)}}$ is given by the identity tensored with the coface map $\cO_{E,\eta}\to\Frac\cO_{E,\widehat{e}}$ of the ad\`eles of $\cO_E$;
\item the map $\prod_{e\in |E|_{cl}}\El_T(X;k)_{\widehat{e}}\to\prod_{e\in |E|_{cl}}\El_T(X;k)_{\widehat{(\eta>e)}}$ is given by the product of tensor products of pullback maps in cohomology along inclusions $X^{T}\hookrightarrow X^{T(e)}$ and coface maps $\cO_{E,\widehat{e}}\to\Frac\cO_{E,\widehat{e}}$ of the ad\`eles of $\cO_E$.
\end{itemize}
\end{definition}
In the above definition, $|E|_{cl}$ denotes the set of closed points of $E$. 

\begin{remark}
In the above definition we only describe the \emph{reduced} cosimplicial structure of the ad\`eles, and not the full cosimplicial structure. This is enough. See for example \cite{MorAd} for a description of the reduced ad\`eles in the one dimensional case, and of the relevant (reduced) cosimplicial structure.
\end{remark}

\begin{proposition}
Let $X$ be a finite $T=S^1$-CW-complex. Let $\El_T(X)$ be Grojnowski's $T$-equivariant elliptic cohomology. The adelic descent data for $\El_T(X)$ is 
\begin{itemize}
\item For a closed point $e\in E$, $\El_T(X)_{\widehat{e}}=H_{T}^{\oplus, \ast}(X^{T(e)};\bC)\otimes_{\cO(\ft)}\cO_{E,\widehat{e}}$;
\item $\El_T(X)_{\widehat{\eta}}=H^{\oplus, \ast}(X^T;\bC)\otimes_{k}\cO_{E,\eta}$;
\item $\El_T(X)_{\widehat{(\eta>e)}}=H^{\oplus, \ast}(X^T;\bC)\otimes_{k}\Frac\cO_{E,\widehat{e}}$.
\end{itemize}
The reduced cosimplicial structure is given by the coface maps for the ad\`eles of $\cO_E$ and pullback maps in singular cohomology.
\end{proposition}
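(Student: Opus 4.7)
The plan is to identify each term of the adelic decomposition of $\El_T(X)$ directly from Grojnowski's analytic construction, and then invoke the rank-one localization theorem in $T$-equivariant cohomology to match the formulas at the crossings; functoriality then takes care of the cosimplicial structure.

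The closed-point completions $\El_T(X)_{\widehat{e}}$ are essentially built into the construction recalled at the end of Subsection \ref{subsection:PerlimGroj}: on a small analytic neighbourhood $U_e$ of a closed point $e \in E$, Grojnowski's sheaf is the translation via $\exp^2$ of $\cH_T^{\an}(X^{T(e)})$, and passing to the formal completion along $e$ presents this as the base change of $H_T^{\oplus,*}(X^{T(e)};\bC)$ along the exponential map $\cO(\ft_\bC) \to \cO_{E,\widehat{e}}$. Algebraic and analytic completions agree by coherence of $\El_T(X)$ and GAGA. For the generic stalk, I would restrict to a Zariski dense open $U \subset E$ avoiding the identity and the torsion points; on every closed point $e \in U$ one has $T(e)=T$, hence $X^{T(e)} = X^T$, and because $T$ acts trivially on $X^T$, $H_T^{\oplus,*}(X^T;\bC) \simeq H^{\oplus,*}(X^T;\bC) \otimes_\bC \cO(\ft_\bC)$. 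Grojnowski's construction then makes the sheaf restrict to $H^{\oplus,*}(X^T;\bC) \otimes_\bC \cO_E|_U$ on $U$; localizing at $\eta$ gives the claimed formula.

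The crossing stalks are the main step. Combining the identification of $\El_T(X)_{\widehat{e}}$ with the one-dimensional adelic formula $\cF_{\widehat{(\eta>e)}} = \mathrm{Frac}\,\cO_{E,\widehat{e}} \otimes_{\cO_{E,\widehat{e}}} \cF_{\widehat{e}}$ yields
$$\El_T(X)_{\widehat{(\eta>e)}} \simeq H_T^{\oplus,*}(X^{T(e)};\bC) \otimes_{\cO(\ft_\bC)} \mathrm{Frac}\,\cO_{E,\widehat{e}}.$$
Matching this with $H^{\oplus,*}(X^T;\bC)\otimes_\bC \mathrm{Frac}\,\cO_{E,\widehat{e}}$ is precisely the content of the localization theorem applied to the inclusion $X^T \hookrightarrow X^{T(e)}$: the pullback map becomes an isomorphism after inverting the characters of $T$ appearing as weights on the normal bundle $N_{X^T/X^{T(e)}}$. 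The main thing to verify — and the principal technical obstacle — is that these weights, which are characters of $T$ trivial on $T(e)$ but nontrivial on $T$, map to units in $\mathrm{Frac}\,\cO_{E,\widehat{e}}$ under the exponential identification $\widehat{(\ft_\bC)}_{\log(e)} \simeq \cO_{E,\widehat{e}}$. This reduces to checking that such a character specializes at $\log(e) \in \ft_\bC$ to a nonzero complex number, a direct computation once one writes down $\log(e)$ in terms of the order of $e$ and the cocharacter lattice of $T$.

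Finally, the reduced cosimplicial structure on the adelic decomposition is inherited from that on the ad\`eles of $\cO_E$ together with the pullback maps in $T$-equivariant singular cohomology along the inclusions $X^T \hookrightarrow X^{T(e)}$. Each of the identifications above is functorial in these operations --- the completion at $e$ is natural in $X^{T(e)}$ and the isomorphism at the crossing is induced by the localization map --- and so the adelic descent data assembles into the reduced cosimplicial sheaf of Definition \ref{def:ell1}.
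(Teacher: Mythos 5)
Your overall architecture (closed points from Grojnowski's construction, localization for the generic point and the crossing, cosimplicial structure from functoriality of pullbacks and adelic cofaces) matches the paper's, and your use of the one\-dimensional adelic formula $\bA_E((\eta>e),\cF)\simeq\cF_{\widehat e}\otimes_{\cO_{E,\widehat e}}\Frac\cO_{E,\widehat e}$ for coherent $\cF$ is legitimate. But the step you yourself call the principal technical obstacle is carried out with the wrong identification. Grojnowski's local model at a closed point $e$ is $(\tau_e\circ\mathrm{exp}^2)_{\ast}\cH_T^{\an}(X^{T(e)})|_{\mathrm{log}^2(e^{-1}U_e)}$: because of the translation $\tau_e$, the completion at $e$ is the completion of $\cH_T(X^{T(e)})$ at $0\in\ft_\bC$, i.e.\ the $\cO(\ft_\bC)$-module structure on $\cO_{E,\widehat e}$ is the one recalled in Subsection \ref{subsection:PerlimGroj}, via $\widehat{\mu}_e:E_{\widehat 1}\to E_{\widehat e}$ and $E_{\widehat 1}\simeq\ft_{\bC,\widehat 0}$ --- not via your identification $\widehat{(\ft_\bC)}_{\log(e)}\simeq\cO_{E,\widehat e}$. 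With your identification the closed-point formula itself already fails: for $X=S^1/(\bZ/n)$ and $e$ of exact order $n$ one has $X^{T(e)}=X$ and $H_T^{\oplus,\ast}(X;\bC)\simeq\bC$ supported at $0\in\ft_\bC$, so its completion at $\log(e)\neq 0$ vanishes, while Grojnowski's sheaf has nonzero completion at $e$. Consequently the check you propose (that the normal-bundle weights evaluate to nonzero numbers at $\log(e)$) is not the relevant one. With the correct module structure the verification is different and immediate: the kernel and cokernel of $H_T^{\oplus,\ast}(X^{T(e)};\bC)\to H_T^{\oplus,\ast}(X^{T};\bC)$ are finitely generated and supported at $0\in\ft_\bC$ (the isotropy groups occurring in $X^{T(e)}\setminus X^T$ are proper, hence finite), any nonzero linear form on $\ft_\bC$ maps to a nonzero element of $\cO_{E,\widehat e}$ and hence to a unit of $\Frac\cO_{E,\widehat e}$, and flatness of $\Frac\cO_{E,\widehat e}$ over $\cO(\ft_\bC)$ gives the isomorphism. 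This is in effect how the paper treats the crossing, writing $\El_T(X)_{\widehat{(\eta>e)}}=\bA_E((e),\tilde j_\eta\El_T(X))$ and applying localization.

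A second, smaller gap is your treatment of the generic stalk: there is no Zariski open of $E$ avoiding all torsion points, since these form an infinite (Zariski-dense) set of closed points while a nonempty open omits only finitely many. The argument is saved by finiteness of $X$ as a $T$-CW-complex: only finitely many isotropy subgroups occur, so the comparison with the sheaf with sections $H^{\oplus,\ast}(X^T;\bC)\otimes_\bC\cO_E$ fails at only finitely many closed points; equivalently, apply the localization theorem directly at $\eta$, as the paper does. The remaining points --- the closed-point completions read off from the construction (with GAGA), and the functoriality argument for the reduced cosimplicial structure --- are fine and agree with the paper's proof.
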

\begin{proof}
The case of closed points is explained in \cite{Gan14}. For the generic point, the localization theorem yields
$$\El_T(X)_{\widehat{\eta}}\simeq \left (c_{\eta}^{\ast}H^{\oplus,\ast}(X^{T(\eta)};\bC)\right )_{\widehat{\eta}}\simeq H^{\oplus, \ast}(X^T;\bC)\otimes_{k}\cO_{E,\eta}$$
For the chain $(\eta>e)$, by definition we have
$$\El_T(X)_{\widehat{(\eta>e)}}=\bA_{E}((e), \tilde{j}_{\eta}\El_T(X))$$
hence by localization
$$\bA_{E}((e), \tilde{j}_{\eta}\El_T(X))\simeq H^{\oplus, \ast}(X^T;\bC)\otimes_{k}\Frac\cO_{E,\widehat{e}}$$

We now describe the cosimplicial structure. The map
$$\bA_{E}((\eta),\El_{S^1}(X))\to \bA_{E}((\eta>e),\El_{S^1}(X))$$
is given by the identity on $H^{\oplus,\ast}(X^T)$ tensored with the coface map relative to the ad\`eles for the structure sheaf of $E$. The map
$$\bA_{E}((e),\El_{S^1}(X))\to \bA_{E}((\eta>e),\El_{S^1}(X))$$
is given by the pullback along the inclusion $X^{T(e)}\hookrightarrow X$ in singular cohomology tensored with the coface map as in the case above.
\end{proof}

The above proposition shows that, if $k=\bC$, Definition \ref{def:ell1} recovers $S^1$-equivariant elliptic cohomology in the sense of Grojnowski:
\begin{corollary}
Let $X$ be a finite $S^1$-CW-complex. Then
$$\El_{S^1}(X;\bC)\simeq\El_{S^1}(X)$$
\end{corollary}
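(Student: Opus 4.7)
The corollary is essentially an adelic reconstruction statement. The plan is to observe that Definition \ref{def:ell1} constructs $\El_{S^1}(X;\bC)$ by prescribing its reduced adelic descent datum on the one-dimensional Noetherian scheme $E$, and that the preceding proposition identifies Grojnowski's sheaf $\El_{S^1}(X)$ as a coherent sheaf on $E$ whose reduced adelic descent datum coincides, term by term and coface by coface, with the one written down in Definition \ref{def:ell1} (with $k=\bC$). Hence by adelic reconstruction the two coherent sheaves agree.

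More precisely, the first step is to note that both $\El_{S^1}(X;\bC)$ and Grojnowski's $\El_{S^1}(X)$ are coherent sheaves on $E$, a Noetherian scheme of dimension one. For such a scheme Beilinson's theorem (Theorem 1.16 of \cite{Groech}, recalled in Subsection \ref{sec:adelic}) gives a flasque resolution $\cF \to \bfA_E^\bullet(\cF)$, and Groechenig's reconstruction theorem identifies a quasi-coherent sheaf with the totalization of its reduced adelic complex. So it suffices to produce a map of cosimplicial objects of adelic data between the two sides that is an equivalence level-wise.

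The second step is the actual comparison of the descent data. On closed points this is immediate from the definition together with the content of \cite{Gan14} as used in the proposition. On the generic point and on the chains $(\eta > e)$ one uses the localization theorem in equivariant cohomology to identify the stalks at $\eta$ (respectively the adelic pieces at $(\eta > e)$) of Grojnowski's sheaf with $H^{\oplus,\ast}(X^T;\bC) \otimes_{\bC} \cO_{E,\eta}$ (respectively with $H^{\oplus,\ast}(X^T;\bC) \otimes_{\bC} \Frac \cO_{E,\widehat{e}}$), matching the prescription of Definition \ref{def:ell1}. The coface maps on both sides are, by construction on one side and by inspection on the other, the tensor product of the adelic coface maps for $\cO_E$ with the pullback maps in Borel equivariant singular cohomology along the closed inclusions $X^T \hookrightarrow X^{T(e)}$.

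The main (minor) obstacle is purely bookkeeping: one must check that the natural isomorphisms of adelic pieces intertwine the two kinds of coface operators, i.e.\ that the compatibility between the ``change of fixed locus'' map and the completion map is the one produced by localization. This is automatic once one writes down the commuting square of pullback in equivariant cohomology, completion, and localization, using that the completion of $H_T^{\oplus,\ast}(X^{T(e)};\bC)$ along the ideal of $e \in E$ is computed by base change to $\cO_{E,\widehat{e}}$ via the map $\widehat{\mu}_e$. Once this is checked, adelic reconstruction gives the desired equivalence $\El_{S^1}(X;\bC) \simeq \El_{S^1}(X)$ of coherent sheaves on $E$.
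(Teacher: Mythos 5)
Your proposal is correct and follows essentially the same route as the paper: the preceding proposition computes, via the localization theorem, the reduced adelic descent data and coface maps of Grojnowski's sheaf and identifies them with the data prescribed in Definition \ref{def:ell1} for $k=\bC$, after which adelic reconstruction on the one-dimensional Noetherian scheme $E$ yields the isomorphism. The bookkeeping step you flag (compatibility of completion, localization, and pullback) is exactly what the proposition's proof handles, so nothing further is needed.
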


We now construct pullback and change of group maps in $S^1$-equivariant elliptic cohomology with coefficients in $k$.

\begin{lemma}\label{lemma:pullbacks1}
A $T$-equivariant map $f:X\to Y$ induces a pullback map
$$f^\ast:\El_{S^1}(Y;k)\to\El_{S^1}(X;k)$$
\end{lemma}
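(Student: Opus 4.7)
The plan is to build $f^\ast$ piecewise on the adelic descent data of $\El_{S^1}(-;k)$ as described in Definition \ref{def:ell1}, and then verify compatibility with the (reduced) cosimplicial structure. Since the sheaf $\El_{S^1}(X;k)$ is specified by this adelic data together with the cosimplicial maps, a compatible morphism of adelic data will descend uniquely to a map of coherent sheaves on $E$ by adelic descent.

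First, I would use that $f\colon X\to Y$ being $T$-equivariant implies that for every closed subgroup $K\subset T$ the map $f$ restricts to $f^{K}\colon X^{K}\to Y^{K}$, still $T$-equivariant. In particular, for each closed point $e\in E$ we obtain $f^{T(e)}\colon X^{T(e)}\to Y^{T(e)}$ and $f^{T}\colon X^{T}\to Y^{T}$, which induce pullback maps in Borel $T$-equivariant singular cohomology with $k$-coefficients. I then define $f^\ast$ on descent data as follows: at a closed point $e$, take $(f^{T(e)})^\ast \otimes \id$ as a map
$$H_{T}^{\oplus,\ast}(Y^{T(e)};k)\otimes_{\cO(\ft)}\cO_{E,\widehat{e}}\longrightarrow H_{T}^{\oplus,\ast}(X^{T(e)};k)\otimes_{\cO(\ft)}\cO_{E,\widehat{e}};$$
at the generic point $\eta$, take $(f^{T})^\ast\otimes \id$ over $\cO_{E,\eta}$; at a chain $(\eta>e)$, take $(f^{T})^\ast\otimes \id$ over $\Frac \cO_{E,\widehat{e}}$.

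Second, I would verify the two coface compatibilities spelled out in Definition \ref{def:ell1}. The compatibility with the coface $\El_T(-)_{\widehat{\eta}}\to \prod_{e}\El_T(-)_{\widehat{(\eta>e)}}$ is immediate: this coface is the identity on the cohomology factor tensored with the adelic coface $\cO_{E,\eta}\to \Frac\cO_{E,\widehat{e}}$, so it commutes with $(f^{T})^\ast\otimes\id$ by associativity of tensor product. For the coface $\prod_{e}\El_T(-)_{\widehat{e}}\to \prod_{e}\El_T(-)_{\widehat{(\eta>e)}}$, the relevant square of cohomology pullbacks is the one obtained by applying Borel $T$-equivariant cohomology to the commutative square of $T$-equivariant maps
$$\begin{array}{ccc} X^{T} & \hookrightarrow & X^{T(e)} \\ \downarrow f^{T} & & \downarrow f^{T(e)} \\ Y^{T} & \hookrightarrow & Y^{T(e)}, \end{array}$$
which commutes by functoriality of $H^{\oplus,\ast}_{T}(-;k)$; tensoring with the adelic coface $\cO_{E,\widehat{e}}\to \Frac\cO_{E,\widehat{e}}$ preserves this commutativity.

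Third, I would invoke adelic descent (the coherent version, i.e.\ Beilinson's theorem recalled in Subsection \ref{sec:adelic}) to promote the morphism of cosimplicial adelic data to a morphism $f^\ast\colon \El_{S^1}(Y;k)\to \El_{S^1}(X;k)$ of coherent sheaves on $E$. The main obstacle is essentially bookkeeping: one must check that the pullback in Borel equivariant cohomology is indeed $\cO(\ft)$-linear (which is automatic, since $\cO(\ft)\simeq H^{\oplus,0}_{T}(\mathrm{pt};k)$ acts via pullback along the constant map, functorial in the space), so that the tensor-product extensions to $\cO_{E,\widehat{e}}$, $\cO_{E,\eta}$, and $\Frac\cO_{E,\widehat{e}}$ are well defined. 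All other verifications reduce to the naturality of restriction to fixed loci and of Borel equivariant cohomology, both of which are standard.
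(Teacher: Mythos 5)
Your proposal is correct and follows essentially the same route as the paper: in both cases $f^\ast$ is defined chainwise on the adelic descent data as the pullback along the restriction of $f$ to the relevant fixed locus tensored with the identity on the adelic factor, compatibility with the coface maps is reduced to the naturality of (equivariant) cohomology pullbacks applied to the square of fixed-point inclusions, and the resulting cosimplicial map is then identified with a map of sheaves via adelic reconstruction.
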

\begin{proof}
We can give the adelic decomposition of $f^\ast$ as follows. For a reduced chain $\Delta$ on $E$, we declare
$$\bA_{E}(\Delta, f^{\ast}):H^{\oplus,\ast}_{T}(Y^{T(\Delta)};k)\otimes_{k}\cO_{E,\eta}\to H^{\oplus,\ast}_{T}(X^{T(\Delta)};k)\otimes_{k}\cO_{E,\eta}$$
to be the tensor product of the pullback map $H^{\oplus,\ast}_{T}(Y^{T(\Delta)};k)\to H^{\oplus,\ast}_{T}(X^{T(\Delta)};k)$ in equivariant cohomology relative to the map $f^{T(\Delta)}: X^{T(\Delta)}\to Y^{T(\Delta)}$. Here we denote by $T(\Delta)$ the subgroup $T(x)$, where $x$ is the maximal point in $\Delta$.

The coface maps in Definition \ref{def:ell1} are defined to be the tensor product of pullback maps in singular cohomology and the coface maps for the ad\`eles for $E$. In particular, the maps $\bA_{E}(\Delta, f^{\ast})$ assemble into a cosimplicial map
$$\bA_{E}^{\bullet}(f^{\ast}):\bA_{E}^{\bullet}(\El_{S^1}(Y;k))\to\bA_{E}^{\bullet}(\El_{S^1}(X;k))$$
as pullbacks commute with pullbacks.
\end{proof}

\begin{lemma}\label{lemma:grpch11}
Let $X$ be a finite $S^1$-CW-complex, and $1:\ast\to S^1$ be the identity of $S^1$. The map $1$ induces a map of sheaves on $E$
$$\El_{S^1}(X;k)\to (1_E)_{\ast} H^{\oplus,\ast}(X;k)$$
where $1_E:E_1\simeq\Spec k\to E\simeq E_{S^1}$ is the identity section of $E$.
\end{lemma}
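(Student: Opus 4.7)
The plan is to produce a morphism between the adelic descent data of the two sheaves and appeal to adelic descent to recover a morphism of sheaves on $E$. First I would identify the adelic data of the target: $(1_E)_\ast H^{\oplus,\ast}(X;k)$ is a skyscraper supported at $1\in E$, so all of its entries in $\bA_E^\bullet$ vanish except at the closed vertex $1$, where the completion equals $H^{\oplus,\ast}(X;k)$. Vanishing at other closed points is definitional, while vanishing at the generic point and at chains $(\eta>e)$ follows because tensoring a torsion module supported at $1$ with $\cO_{E,\eta}$ or with $\Frac\cO_{E,\widehat e}$ yields zero.

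On the source side, Definition \ref{definition:T(x)} forces $T(1)=\{1\}$: the identity $1\in E$ lies in $E_K$ for every closed subgroup $K\subset S^1$, so $S(1)$ exhausts all such $K$ and the intersection is trivial. Hence $X^{T(1)}=X$ and
$$\El_{S^1}(X;k)_{\widehat 1}=H^{\oplus,\ast}_{S^1}(X;k)\otimes_{\cO(\ft)}\cO_{E,\widehat 1}.$$
At this vertex I would take as my map the tensor product, over $\cO(\ft)$, of the change-of-group restriction $H^{\oplus,\ast}_{S^1}(X;k)\to H^{\oplus,\ast}(X;k)$ along $1\colon \ast\hookrightarrow S^1$ with the evaluation $\cO_{E,\widehat 1}\to k$ at the identity. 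Both factors descend through the augmentation $\cO(\ft)\to k$---the restriction kills the equivariant parameter $u\in H^{\oplus,0}_{S^1}(\ast;k)$, and the exponential sends $0\in\ft$ to $1\in E$---so this tensor product is well defined as a map of $\cO_{E,\widehat 1}$-modules. At every other entry of $\bA_E^\bullet$ the map is forced to be zero by vanishing of the target.

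Finally I would verify that these pieces assemble into a morphism of the cosimplicial objects $\bA_E^\bullet\El_{S^1}(X;k)\to\bA_E^\bullet(1_E)_\ast H^{\oplus,\ast}(X;k)$. The only nontrivial squares to check involve the coface maps into the degree-one entries at chains $(\eta>1)$ and $(\eta>e)$, and they commute automatically since the relevant targets vanish, making both legs zero. By Beilinson's theorem (in the form recalled in Subsection \ref{sec:adelic}) this cosimplicial map yields the desired morphism of sheaves on $E$. I do not expect any serious obstacle: the content is concentrated at the single closed vertex $1$, and there it is nothing more than the classical restriction map in Borel-equivariant cohomology, evaluated at the identity of $E$.
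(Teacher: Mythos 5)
Your proposal is correct and matches the paper's argument, which likewise defines the map through its adelic decomposition: the restriction $H^{\oplus,\ast}_{S^1}(X;k)\to H^{\oplus,\ast}(X;k)$ over the identity section and the zero map everywhere else. The extra details you supply (the skyscraper's adelic data, $T(1)=\{1\}$, well-definedness over $\cO(\ft)$, and the trivially commuting cosimplicial squares) are exactly what the paper leaves implicit.
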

\begin{proof}
The adelic decomposition of this map can be described as follows. Over the identity section $1_E$, map is induced by the analogous map at the level of equivariant cohomology:
$$H_{S^1}^{\oplus,\ast}(X)\to H^{\oplus,\ast}(X)$$
while everywhere else it is given by the zero map.
\end{proof}

\begin{lemma}\label{lemma:grpchS1}
Let $S^1\to S^1/S^1\simeq\ast$ be the quotient map, and $X$ be a finite CW-complex. This map induces a map of sheaves on $E$
$$H^{\oplus,\ast}(X;k)\to c_{\ast} \El_{S^1}(X;k)$$
where $c:E\to\Spec k$ is the structure map of $E$. In the above, $S^1$ acts on $X$ through the homomorphism $S^1\to S^1/S^1$, i.e. trivially.
\end{lemma}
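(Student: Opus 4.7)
The plan is to construct the map via adelic descent, mirroring the strategy used for Lemma \ref{lemma:grpch11} but in the opposite direction. The observation that drives everything is that when $S^1$ acts trivially on $X$, we have $X^K = X$ for every closed subgroup $K \subseteq S^1$, and moreover by K\"unneth $H_{S^1}^{\oplus,\ast}(X;k) \simeq H^{\oplus,\ast}(X;k)\otimes_k H^{\oplus,\ast}_{S^1}(\ast;k) \simeq H^{\oplus,\ast}(X;k)\otimes_k \cO(\ft)$ as an $\cO(\ft)$-module. Substituting this into Definition \ref{def:ell1}, the adelic components of $\El_{S^1}(X;k)$ collapse to
\begin{align*}
\El_{S^1}(X;k)_{\widehat{e}} &\simeq H^{\oplus,\ast}(X;k)\otimes_{k}\cO_{E,\widehat{e}}, \\
\El_{S^1}(X;k)_{\widehat{\eta}} &\simeq H^{\oplus,\ast}(X;k)\otimes_{k}\cO_{E,\eta}, \\
\El_{S^1}(X;k)_{\widehat{(\eta>e)}} &\simeq H^{\oplus,\ast}(X;k)\otimes_{k}\Frac\cO_{E,\widehat{e}}.
\end{align*}

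Next, I would define the chain-wise components of the desired map by tensoring the identity on $H^{\oplus,\ast}(X;k)$ with the unit maps $k \to \cO_{E,\widehat{e}}$, $k\to\cO_{E,\eta}$, and $k\to\Frac\cO_{E,\widehat{e}}$. Verifying that these components assemble into a cosimplicial map $\bA_{E}^{\bullet}(c_\ast H^{\oplus,\ast}(X;k)) \to \bA_{E}^{\bullet}(\El_{S^1}(X;k))$ is straightforward: the coface maps in Definition \ref{def:ell1} are tensor products of cohomological pullbacks along inclusions $X^{S^1}\hookrightarrow X^{T(e)}$ with coface maps for the ad\`eles of $\cO_{E}$, and under trivial action these pullbacks are all identities, so compatibility reduces to compatibility of the unit maps $k \to R_\Delta$ with the cosimplicial structure on the ad\`eles of $\cO_E$, which is automatic.

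Applying the reduced adelic reconstruction then produces the desired map of (quasi-)coherent sheaves, or equivalently the map of $k$-modules $H^{\oplus,\ast}(X;k) \to \Gamma(E,\El_{S^1}(X;k)) = c_\ast\El_{S^1}(X;k)$. There is no real obstacle in this argument: once the simplification of the fixed loci under the trivial action is recorded, both the definition of the map and the verification of its cosimplicial compatibility are essentially tautological, in close analogy with the zero-section map of Lemma \ref{lemma:grpch11}.
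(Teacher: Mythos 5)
Your argument is correct and matches the paper's: both rest on the observation that, for the trivial action, every fixed locus is $X$ itself and K\"unneth collapses all the adelic components of $\El_{S^1}(X;k)$ to $H^{\oplus,\ast}(X;k)\otimes_k(\text{ad\`eles of }\cO_E)$, i.e.\ $\El_{S^1}(X;k)\simeq H^{\oplus,\ast}(X;k)\otimes_k\cO_E\simeq c^{\ast}H^{\oplus,\ast}(X;k)$, so the desired map is just the unit/augmentation $k\to\bA_E(\Delta,\cO_E)$ assembled through the totalization. The only cosmetic slip is writing $c_{\ast}$ where $c^{\ast}$ is meant for the source of the cosimplicial comparison; the paper expresses the same content more briefly by passing to the adjoint map $c^{\ast}H^{\oplus,\ast}(X;k)\to\El_{S^1}(X;k)$ and noting it is an isomorphism.
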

\begin{proof}
By adjunction, we can equivalently describe the map 
$$c^{\ast} H^{\oplus,\ast}(X;k)\to\El_{S^1}(X;k)$$ 
This map is actually an isomorphism. Indeed, as $S^1$ acts on $X$ via the homomorphism $S^1\to S^1/S^1$, the equivariant elliptic cohomology $\El_{S^1}(X;k)$ is the tensor product 
$$H^{\oplus,\ast}(X;k)\otimes_{k}\cO_E$$
which is exactly the pullback of $H^{\oplus,\ast}(X;k)$ along the structure map $c:E\to\Spec k$.
\end{proof}

\subsection{The higher rank case}
We define $\El_{T}(X;k)$ for higher rank tori via induction on the rank of the torus $T$. This allows us to avoid describing the ad\`eles with respect to a chain $\Delta=(x,x_1,\dots,x_p)$ in terms of singular cohomology whenever $x_p$ is not closed. If $x_p$ is closed it is possible to give such a description. 

\begin{definition}\label{def:elln}
Let $k$ be a field of characteristic zero and $E$ be an elliptic curve over $k$, $T$ be a real torus of rank $n$ and $X$ be a finite $T$-CW-complex. The $k$-rationalized $T$-equivariant elliptic cohomology of $X$ is the coherent sheaf $\El_T(X;k)$ on $E_T$ inductively defined by the following adelic descent data:
\begin{itemize}
\item given a reduced chain $\Delta=(x,x_1,\dots,x_p)$,\\
$\bfA_{E_T}(\Delta, \El_T(X;k))=c_{x}^{\ast}\El_{T'(x)}(X^{T(x)};k)\otimes_{\cO_{E_T}}\cO_{E_T,\widehat{\Delta}}$;
\item given a reduced chain $\Delta=(x,x_1,\dots,x_p)$, if $x_p$ is closed,\\
$\bfA_{E_T}(\Delta, \El_T(X;k))=H_{T}^{\oplus, \ast}(X^{T(x)};k)\otimes_{\cO(\ft)}\cO_{E_T,\widehat{\Delta}}$.
\end{itemize}
Here $\El_{T'(x)}(X^{T(x)};k)$ is $k$-rationalized $T'(x)$-equivariant elliptic cohomology of $X^{T(x)}$ and 
$$c_x:E_T\to E_{T'(x)}$$
The cosimplicial structure is the following:
\begin{itemize}
\item if the chain $\Delta=(x,x_1,\dots,x_p)$ is such that $x_p$ is closed we use Proposition 3.2.2 in \cite{HubAd}, so that the relevant coface maps for removing a point of the chain are given by tensor products of the corresponding coface maps for the ad\`eles for the structure sheaf of $E_T$ and pullback maps in equivariant cohomology;
\item If $x_p$ is not closed, the coface maps are tensor products of the corresponding maps of the ad\`eles of $\cO_{E_T}$ and pullback and change of group maps in equivariant elliptic cohomology with respect to tori of rank strictly smaller than $\mathrm{rk}(T)$. 
\end{itemize}
\end{definition}
\begin{remark}
As in Definition \ref{def:ell1}, we give only the reduced cosimplicial structure and deal with reduced chains. Moreover, as $x$ is not a closed point, the torus $T'(x)$ is of rank strictly smaller than the rank of $T$. 
\end{remark}
\begin{remark}
If we remove a point $x_i$, $i\in\{1,\dots,p-1\}$, the pullback map in equivariant cohomology is the identity, since the point $x$ is not removed. This map differs from the identity only when we remove $x$ from $\Delta$. 
Similarly, if we remove $x_p$ from $\Delta$, we need to use the presentation in terms of equivariant elliptic cohomology for smaller rank tori when describing the associated coface map, as $x_{p-1}$ is not closed.
\end{remark}

To ensure that Definition \ref{def:elln} is well posed, we need to produce pullback and restriction maps inductively.
\begin{lemma}\label{lemma:pullbacksn}
Let $f:X\to Y$ be a $T$-equivariant map, where $T$ is a torus of rank $n$. Assume that $k$-rationalized equivariant elliptic cohomology has pullbacks with respect to $\tilde{T}$-equivariant maps, where $\tilde{T}$ is any torus of rank strictly smaller than $n$. Moreover, assume that change of group maps with respect to maps of tori exist in $k$-rationalized $\tilde{T}$-equivariant elliptic cohomology. 
Then $f$ induces a pullback map
$$f^\ast:\El_{T}(Y;k)\to\El_{T}(X;k)$$
in $k$-rationalized $T$-equivariant elliptic cohomology.
\end{lemma}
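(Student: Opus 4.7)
The plan is to mimic the rank one argument of Lemma \ref{lemma:pullbacks1}, defining $f^\ast$ chain-by-chain on adelic descent data and then verifying compatibility with the cosimplicial structure from Definition \ref{def:elln}. First I would unpack what has to be specified: for every reduced chain $\Delta=(x,x_1,\dots,x_p)$ in $|E_T|_\bullet$, a map
$$\bfA_{E_T}(\Delta,f^\ast):\bfA_{E_T}(\Delta,\El_T(Y;k))\to\bfA_{E_T}(\Delta,\El_T(X;k)),$$
together with the verification that these maps commute with all cofaces.

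Next I would define $\bfA_{E_T}(\Delta,f^\ast)$ by case analysis following Definition \ref{def:elln}. In the case where $x_p$ is a closed point, both sides are of the form $H_T^{\oplus,\ast}(-^{T(x)};k)\otimes_{\cO(\ft)}\cO_{E_T,\widehat{\Delta}}$, so I take the tensor product of the equivariant singular pullback along $f^{T(x)}:X^{T(x)}\to Y^{T(x)}$ with the identity on $\cO_{E_T,\widehat{\Delta}}$. In the general case, both sides are of the form $c_x^\ast\El_{T'(x)}(-^{T(x)};k)\otimes_{\cO_{E_T}}\cO_{E_T,\widehat{\Delta}}$, and since $T'(x)$ has rank strictly smaller than $n$, the inductive hypothesis produces a pullback
$$\left (f^{T(x)}\right )^\ast:\El_{T'(x)}(Y^{T(x)};k)\to\El_{T'(x)}(X^{T(x)};k)$$
whose pullback along $c_x$ tensored with the identity on $\cO_{E_T,\widehat{\Delta}}$ furnishes $\bfA_{E_T}(\Delta,f^\ast)$.

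The main obstacle, and the step I would spend most care on, is verifying that these assignments are compatible with the cosimplicial structure. There are two flavors of cofaces to check. For cofaces that remove an intermediate or terminal non-maximal point $x_i$ (with $i\geq 1$), the rank-$n$ fixed locus $X^{T(x)}$ is unchanged, so the equivariant/inductive pullback factor is literally the identity and compatibility reduces to naturality of the adelic coface on $\cO_{E_T}$. For the coface removing the maximal point $x$ of a chain $\Delta$, compatibility becomes naturality of either the equivariant singular pullback (when $x_p$ is closed, via Proposition 3.2.2 of \cite{HubAd}) or of the inductive pullback/change-of-group maps with respect to the inclusion $X^{T(x)}\hookrightarrow X^{T(x')}$ for the remaining maximal point $x'$. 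In the latter subcase one uses both the inductive pullback hypothesis and the inductive existence of change-of-group maps for the quotient $T'(x')\to T'(x)$: the square
$$\El_{T'(x')}(Y^{T(x')};k)\to\El_{T'(x')}(X^{T(x')};k),\qquad \El_{T'(x)}(Y^{T(x)};k)\to\El_{T'(x)}(X^{T(x)};k)$$
commutes by functoriality of inductive pullbacks under change-of-group, which itself follows by a further induction on the rank, reducing either to Lemma \ref{lemma:pullbacks1} or to the trivial observation that pullbacks commute with pullbacks in equivariant singular cohomology at the closed-chain base case.

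Finally, having produced the cosimplicial map $\bA_{E_T}^\bullet(f^\ast)$, adelic reconstruction (the version of Groechenig's theorem recalled in Subsection \ref{sec:adelic}, applied in the $\bZ_2$-periodic coherent setting) produces the desired morphism $f^\ast:\El_T(Y;k)\to\El_T(X;k)$. The functoriality $(g\circ f)^\ast=f^\ast\circ g^\ast$ and $\id^\ast=\id$ are then immediate chain-by-chain from the corresponding properties in equivariant singular cohomology and, inductively, in $\El_{T'(x)}(-;k)$.
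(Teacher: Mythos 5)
Your proposal is correct and follows essentially the same route as the paper: define $\bfA_{E_T}(\Delta,f^\ast)$ chain-by-chain as the tensor product of the identity on $\cO_{E_T,\widehat{\Delta}}$ with either the equivariant singular pullback (when $x_p$ is closed) or the inductively given pullback for $T'(x)$, and then check cosimplicial compatibility. Your treatment of the coface removing the maximal point, via naturality of the inductive pullback and change-of-group maps, is simply a more explicit spelling-out of what the paper compresses into ``pullbacks commute with pullbacks.''
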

\begin{proof}
We construct these maps using the adelic decomposition described in Definition \ref{def:elln}. In the case $x_p$ is not closed, we use pullback maps for equivariant elliptic cohomology relative to smaller rank tori, which exist by the inductive hypothesis. Indeed, the maps we need to construct is 
$$\bfA_{E_T}(\Delta,\El_{T}(Y;k))\to\bfA_{E_T}(\Delta,\El_{T}(X;k))$$
for all reduced chains $\Delta=(x,x_1,\dots,x_p)$, respecting the reduced cosimplicial structure. The map above is constructed then as the tensor product
$$c_{x}^{\ast}\El_{T'(x)}(X^{T(x)};k)\otimes_{\cO_{E_T}}\cO_{E_T,\widehat{\Delta}}\to c_{x}^{\ast}\El_{T'(x)}(Y^{T(x)};k)\otimes_{\cO_{E_T}}\cO_{E_T,\widehat{\Delta}}$$
of the pullback along $f^{T(x)}:X^{T(x)}\to Y^{T(x)}$ with the identity of $\cO_{E_T,\widehat{\Delta}}$.
If $x_p$ is closed, we equivalently use the presentation of the ad\`eles in terms of equivariant cohomology. In this case, the map between the ad\`eles becomes the tensor product
$$H_{T}^{\oplus, \ast}(X^{T(x)};k)\otimes_{\cO(\ft)}\cO_{E_T,\widehat{\Delta}}\to H_{T}^{\oplus, \ast}(Y^{T(x)};k)\otimes_{\cO(\ft)}\cO_{E_T,\widehat{\Delta}}$$
of the pullback in equivariant cohomology along $f^{T(x)}:X^{T(x)}\to Y^{T(x)}$ with the identity of $\cO_{E_T,\widehat{\Delta}}$.
The commutativity with the cosimplicial structure is clear, as pullbacks commute with pullbacks.
\end{proof}

The proof of the next lemma works very similarly to that of the previous lemma.
\begin{lemma}\label{lemma:gpchangen}
Let $T_1\to T_2$ be a group homomorphism, and $T_1$ and $T_2$ be tori of rank smaller or equal to $n$. Assume that $k$-rationalized equivariant elliptic cohomology has pullbacks with respect to $\tilde{T}$-equivariant maps, where $\tilde{T}$ is any torus of rank strictly smaller than $n$. Moreover, assume that change of group maps with respect to maps of tori exist in $k$-rationalized $\tilde{T}$-equivariant elliptic cohomology.
Then we have a change of group map
$$\El_{T_2}(Y;k)\to c_{1,2 \ast}\El_{T_1}(X;k)$$
In the above, $c_{1,2}$ is the projection map
$$E_{T_1}\to E_{T_2}$$
\end{lemma}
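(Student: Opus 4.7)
The plan is to mirror the strategy of Lemma \ref{lemma:pullbacksn}, constructing the change of group map on adelic descent data chain by chain over $E_{T_2}$. Write $\phi: T_1 \to T_2$ for the homomorphism underlying $c_{1,2}$. Since $c_{1,2}$ is affine, for any reduced chain $\Delta = (x, x_1, \ldots, x_p)$ on $E_{T_2}$ the completion $\bfA_{E_{T_2}}(\Delta, c_{1,2\,\ast} \El_{T_1}(X;k))$ can be identified with the product of completions $\bfA_{E_{T_1}}(\widetilde{\Delta}, \El_{T_1}(X;k))$ ranging over chains $\widetilde{\Delta} = (y, y_1, \ldots, y_p)$ in $E_{T_1}$ lying over $\Delta$. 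It therefore suffices to produce, for each such pair $(\Delta, \widetilde{\Delta})$, a map of $\cO_{E_{T_2},\widehat{\Delta}}$-modules
\[
\bfA_{E_{T_2}}(\Delta, \El_{T_2}(Y;k)) \longrightarrow \bfA_{E_{T_1}}(\widetilde{\Delta}, \El_{T_1}(X;k))
\]
compatible with the reduced cosimplicial structure.

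Next I would split into the two presentations of Definition \ref{def:elln}. When $x_p$ (and hence $y_p$) is closed, both sides admit the singular-cohomological description, and I would define the component as the tensor product of the classical change of group map $H_{T_2}^{\oplus,\ast}(X^{T_2(x)};k) \to H_{T_1}^{\oplus,\ast}(X^{T_1(y)};k)$, induced by $\phi$ together with the inclusion of fixed loci $X^{T_2(x)} \hookrightarrow X^{T_1(y)}$ (note $\phi(T_1(y)) \subseteq T_2(x)$ by construction of the $T(-)$), with the natural coface map of completed structure sheaves on the ad\`eles. When $x_p$ is not closed the top point $x$ is also not closed, so $T_2'(x)$ and $T_1'(y)$ have rank strictly less than $n$; here I would invoke the inductive hypothesis to obtain a change of group map $\El_{T_2'(x)}(X^{T_2(x)};k) \to c'_{\ast}\El_{T_1'(y)}(X^{T_1(y)};k)$ for the induced quotient homomorphism $T_1'(y) \to T_2'(x)$, and tensor it with the identity of $\cO_{E_{T_2},\widehat{\Delta}}$.

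Finally I would verify compatibility with the reduced cosimplicial structure. Removing an intermediate point of a chain is automatic, since the local pieces are unchanged. Removing the top point $x$ or the bottom point $x_p$ reduces to naturality of the Atiyah--Bott change of group map with respect to the pullbacks along inclusions of fixed loci supplied by Lemma \ref{lemma:pullbacksn}, together with naturality of the inductively built elliptic change of group maps. The step I expect to be the main obstacle is precisely the bookkeeping when removing $x_p$: there the adelic presentation switches between the singular-cohomological and the elliptic form, and one must check that the two constructions above agree under this switch. This amounts to a careful but routine naturality check, mediated by the tensor-product identification between $H_T^{\oplus,\ast}$ and the pullback along $\ft \to E_T$ of the corresponding elliptic object, exactly parallel to the compatibility verified in Lemma \ref{lemma:pullbacksn}.
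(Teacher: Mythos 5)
Your overall strategy coincides with the paper's: build the map component-wise on adelic descent data, use the classical change of group map in equivariant singular cohomology tensored with the identity of the adèle when the bottom point of the chain is closed, invoke the inductively constructed change of group maps for the lower-rank elliptic theories otherwise, and check compatibility with the reduced cosimplicial structure. However, the opening step of your construction contains a genuine error. You assert that $c_{1,2}$ is affine and that consequently $\bfA_{E_{T_2}}(\Delta, c_{1,2\,\ast}\El_{T_1}(X;k))$ decomposes as a product of completions $\bfA_{E_{T_1}}(\widetilde{\Delta},\El_{T_1}(X;k))$ over chains $\widetilde{\Delta}$ of the same length lying over $\Delta$. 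The lemma allows arbitrary homomorphisms of tori, in particular surjections with positive-dimensional kernel, and for these $c_{1,2}$ is proper but not affine: already in the rank-one case $T_1=S^1\to T_2=S^1/S^1$ (Lemma \ref{lemma:grpchS1}) the map $c_{1,2}$ is the structure map $c\colon E\to\Spec k$, and $\bfA_{\Spec k}(\Delta,c_{\ast}\cF)=\Gamma(E,\cF)$, which is not a product of completions of $\cF$ along chains of $E$ (nor is there a length-preserving notion of chains lying over $\Delta$). The product description of the adèles of a pushforward is valid only for finite morphisms, i.e.\ essentially when $T_1\to T_2$ has finite kernel. Moreover, even if one had compatible maps into such a product, adelic reconstruction produces a morphism of sheaves on $E_{T_2}$ only from a cosimplicial map into the actual adèles of $c_{1,2\,\ast}\El_{T_1}(X;k)$, which you have not identified.

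The repair is the one already used in the paper's rank-one case: pass by adjunction to a map $c_{1,2}^{\ast}\El_{T_2}(Y;k)\to\El_{T_1}(X;k)$ of sheaves on $E_{T_1}$ and run adelic descent on $E_{T_1}$. Pullback, unlike pushforward, interacts simply with completions along chains: for a coherent sheaf the adèle of $c_{1,2}^{\ast}\El_{T_2}(Y;k)$ at a reduced chain $\widetilde{\Delta}$ is the coefficient datum of $\El_{T_2}(Y;k)$ at the image chain, base-changed to $\cO_{E_{T_1},\widehat{\widetilde{\Delta}}}$. With that in place, your component maps (change of group on the coefficient factor --- singular cohomology when the bottom point is closed, the inductively given elliptic change of group map otherwise --- tensored with the identity of the adèle) and your cosimplicial compatibility checks go through essentially as you wrote them, and this is in substance the paper's one-line argument. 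Alternatively, factor $T_1\to T_2$ into a surjection followed by an injection and treat the finite (injective) case by your product decomposition, reserving the adjunction argument for the surjective part.
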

\begin{proof}
As in the proof of Lemma \ref{lemma:pullbacksn}, the relevant map between the ad\`eles relative to the reeduced chain $\Delta$ is given by a tensor product of the identity of the ad\`ele $\cO_{E_T,\widehat{\Delta}}$ with the change of group map in either $k$-rationalized equivariant elliptic cohomology with respect to tori of smaller rank if $x_p$ is not closed or in singular cohomology for $x_p$ closed.
\end{proof}

As a consequence of the two lemmas \ref{lemma:pullbacksn} and \ref{lemma:gpchangen}, we obtain that $k$-rationalized $T$-equivariant elliptic cohomology has pullbacks with respect to $T$-equivariant maps and change of group maps with respect to homomorphisms of tori.

\begin{corollary}\label{corollary:pullchange}
Let $f:X\to Y$ be a $T$-equivariant map. Then $f$ induces a pullback map
$$f^{\ast}:\El_{T}(Y;k)\to\El_{T}(X;k)$$
Let $T_1\to T_2$ be a group homomorphism, and $T_1$ and $T_2$ be tori. We have an induced change of group map
$$\El_{T_2}(Y;k)\to c_{1,2 \ast}\El_{T_1}(X;k)$$
where $c_{1,2}$ is the projection map
$$E_{T_1}\to E_{T_2}$$
\end{corollary}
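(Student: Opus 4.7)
The plan is to proceed by induction on $n=\mathrm{rk}(T_1)+\mathrm{rk}(T_2)$, or equivalently, on the maximum of the two ranks involved (treating both statements simultaneously, since the constructions are mutually intertwined in Definition \ref{def:elln}). At each stage of the induction, the inductive hypothesis is precisely the input required by Lemma \ref{lemma:pullbacksn} and Lemma \ref{lemma:gpchangen}, so these lemmas carry out the inductive step essentially verbatim.

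For the base case $n=1$, pullbacks along $S^1$-equivariant maps are furnished by Lemma \ref{lemma:pullbacks1}. Change of group maps need to be handled for all homomorphisms between tori of rank at most $1$. The two extreme cases, namely the inclusion $1\hookrightarrow S^1$ and the quotient $S^1\twoheadrightarrow \ast$, are Lemmas \ref{lemma:grpch11} and \ref{lemma:grpchS1} respectively. A general homomorphism $\varphi\colon S^1\to S^1$ is multiplication by some integer $m$; the induced map $c_{1,2}\colon E\to E$ is an isogeny of elliptic curves, and the associated map can be constructed adelically chain by chain as the tensor product of the identity on $\cO_{E,\widehat{\Delta}}$ with the change of group map $H_{S^1}^{\oplus,\ast}(X^{S^1};k)\to H_{S^1}^{\oplus,\ast}(X^{S^1};k)$ induced by $\varphi$ at the level of Borel-equivariant singular cohomology (this is a well-defined map since the fixed loci agree, $X^{S^1}=X^{S^1}$, as $\ker\varphi$ is finite). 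The verification that these chain-level maps respect the reduced cosimplicial structure follows from naturality of the coface maps in Definition \ref{def:ell1} and the functoriality of equivariant cohomology with respect to homomorphisms.

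For the inductive step, assume pullbacks and change of group maps exist whenever all tori involved have rank strictly less than $n$, and let $T$ be a torus of rank $n$. The hypothesis of Lemma \ref{lemma:pullbacksn} is exactly this inductive assumption, so we obtain pullbacks $f^\ast\colon\El_T(Y;k)\to\El_T(X;k)$ for arbitrary $T$-equivariant $f\colon X\to Y$. Similarly, Lemma \ref{lemma:gpchangen} produces change of group maps along homomorphisms $T_1\to T_2$ where $\mathrm{rk}(T_i)\leq n$, again using the same inductive assumption for the chains $\Delta=(x,x_1,\dots,x_p)$ whose maximal point $x$ is not closed (so that $T'(x)$ has strictly smaller rank than $T$). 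For the chains with $x_p$ closed, the alternative presentation in Definition \ref{def:elln} in terms of Borel-equivariant singular cohomology is used, and the required maps come from functoriality of singular cohomology.

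The main conceptual check, rather than a serious obstacle, is verifying that the two descriptions of the ad\`eles in Definition \ref{def:elln} (one via smaller-rank equivariant elliptic cohomology, one via equivariant singular cohomology when $x_p$ is closed) produce compatible maps, so that the tensor-product-by-identity construction glues into a genuine cosimplicial morphism. This compatibility is essentially the content of the equivalence of the two local presentations, which in turn rests on the localization theorem and on the inductive applicability of Proposition 3.2.2 of \cite{HubAd}. Once this compatibility is recorded once and for all, the corollary is a formal consequence of the two preceding lemmas.
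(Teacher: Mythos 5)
Your proposal is correct and matches the paper's intent exactly: the corollary is stated as an immediate consequence of Lemmas \ref{lemma:pullbacks1}, \ref{lemma:grpch11}, \ref{lemma:grpchS1} (base case) and Lemmas \ref{lemma:pullbacksn}, \ref{lemma:gpchangen} (inductive step), with the induction on rank supplying the hypotheses of the latter two lemmas, which is precisely the structure you describe. Your additional treatment of general rank-one homomorphisms $S^1\to S^1$ of degree $m$ (not spelled out in the paper's rank-one lemmas) is a reasonable and compatible completion of the base case, handled in the same adelic, chain-by-chain fashion as the paper's other change of group maps.
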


We now observe that, if $k=\bC$, our definition recovers Grojnowski's. To do so, we need to compute the ad\`eles of Grojnowski's sheaf first. 

\begin{lemma}\label{lemma:localizadeles}
Given a chain $\Delta=(x,x_1,\dots,x_p)$ we have that 
$$\bfA_{E_T}(\Delta, \El_T(X)) \simeq c_{x}^{\ast}\El_{T'(x)}(X^{T(x)})\otimes_{\cO_{E_T}}\cO_{E_T,\widehat{\Delta}}$$
\end{lemma}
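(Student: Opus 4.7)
The plan is to prove this by induction on the length $p$ of the chain $\Delta = (x, x_1, \ldots, x_p)$, exploiting the recursive formula for adèles from Subsection~\ref{sec:adelic}. For a single chain $\Delta$, the set $_y\{\Delta\}$ is empty unless $y = x$, in which case it equals $\{(x_1,\ldots,x_p)\}$, so the recursion collapses to
$$\bfA_{E_T}(\Delta, \El_T(X)) \simeq \lim_{r} \bfA_{E_T}\bigl((x_1,\ldots,x_p),\,\tilde{j}_{rx}\El_T(X)\bigr),$$
with the base case $p=0$ being simply the formal completion $\lim_r \tilde{j}_{rx}\El_T(X)$ of Grojnowski's sheaf at $x$.

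For the base case, I would first treat a closed point $x\in |E_T|_{cl}$, where the formula recalled in Subsection~\ref{subsection:PerlimGroj} (completions of Grojnowski's sheaf are given in terms of $H_T^{\oplus,\ast}(X^{T(x)})$ base-changed to $\cO_{E_T,\widehat{x}}$) directly yields the right-hand side, since $c_x^\ast\El_{T'(x)}(X^{T(x)})$ agrees with $H_{T}^{\oplus,\ast}(X^{T(x)})\otimes_{\cO(\ft_\bC)}\cO_{E_T}$ on a neighborhood of any closed point over which $T(x)=T(e)$. For a general point $x$, the same localization argument works: on an analytic neighborhood of $\overline{\{x\}}$ the subgroup $T(x)\subseteq T$ acts trivially on $X^{T(x)}$, and near $x$ the sheaf $\El_T(X)$ is, by construction, the appropriate translate and pushforward along $c_x\colon E_T\to E_{T'(x)}$ of the coherent sheaf on $E_{T'(x)}$ corresponding to the $T'(x)$-equivariant cohomology of $X^{T(x)}$. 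Formally completing this expression at $x$ produces precisely $c_x^\ast\El_{T'(x)}(X^{T(x)})\otimes_{\cO_{E_T}}\cO_{E_T,\widehat{x}}$.

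For the inductive step, by the base case the inner sheaf $\lim_r\tilde{j}_{rx}\El_T(X)$ has already been identified with $c_x^\ast\El_{T'(x)}(X^{T(x)})\otimes_{\cO_{E_T}}\cO_{E_T,\widehat{x}}$. All subsequent points $x_1,\ldots,x_p$ of the chain lie in $\overline{\{x\}}$, and iterating the adelic recursion along the sub-chain $(x_1,\ldots,x_p)$ amounts to further tensoring over $\cO_{E_T,\widehat{x}}$ with completions at these successive specializations. Since $c_x^\ast\El_{T'(x)}(X^{T(x)})$ is already a coherent sheaf pulled back from $E_{T'(x)}$, and hence flat in the transverse directions captured by these further completions, the iterated completion does not alter this factor and simply replaces $\cO_{E_T,\widehat{x}}$ by the iterated completion $\cO_{E_T,\widehat{\Delta}}$, yielding the claimed formula.

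The main obstacle is the base case for a non-closed point: one must verify that Grojnowski's construction, originally phrased via analytic neighborhoods of closed points, admits a well-defined localized description at every scheme-theoretic point of $E_T$, and that this analytic localization matches the algebraic formal completion under GAGA. Once this is in place, the inductive step is largely formal, reducing to the compatibility of the recursive definition of adèles with pullback along the smooth projection $c_x\colon E_T \to E_{T'(x)}$ and with flat base change.
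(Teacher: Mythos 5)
Your proposal is, in substance, the paper's own argument: the only real input is the localization theorem at the maximal point $x$ of the chain, identifying Grojnowski's sheaf near $x$ with $c_x^{\ast}\El_{T'(x)}(X^{T(x)})$, after which the formula is formal adelic bookkeeping for coherent sheaves. The paper organizes this more efficiently than your induction on chain length: it applies the localization isomorphism $\tilde{j}_{rx}\El_T(X)\simeq\tilde{j}_{rx}c_x^{\ast}\El_{T'(x)}(X^{T(x)})$ at each finite thickening $r$, so that the recursive definition $\bfA_{E_T}(\Delta,\El_T(X))=\lim_{r}\bfA_{E_T}({}_{x}\Delta,\tilde{j}_{rx}\El_T(X))$ immediately gives $\bfA_{E_T}(\Delta,\El_T(X))\simeq\bfA_{E_T}(\Delta,c_x^{\ast}\El_{T'(x)}(X^{T(x)}))$, which is the right-hand side because the ad\`eles of a coherent sheaf along a single chain are the sheaf tensored with $\cO_{E_T,\widehat{\Delta}}$ (Huber, Proposition 3.2.1). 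Two spots in your write-up need tightening, though neither is fatal: (i) your inductive step tacitly replaces $\lim_r\bfA_{E_T}((x_1,\dots,x_p),\tilde{j}_{rx}\El_T(X))$ by the ad\`eles of the completed sheaf $\lim_r\tilde{j}_{rx}\El_T(X)$; the ad\`ele functor commutes with directed colimits, not limits, so this exchange requires an argument --- or is avoided entirely by localizing thickening-by-thickening as the paper does; (ii) the reason the factor $c_x^{\ast}\El_{T'(x)}(X^{T(x)})$ passes unchanged through the iterated completions is not flatness ``in transverse directions'' but coherence (finite presentation), which is exactly what the tensor formula for ad\`eles of coherent sheaves rests on. The base case at non-closed points that you flag as the main obstacle is precisely what the paper also takes as given from the localization theorem, so on that point you and the paper are on equal footing.
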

\begin{proof}
By definition, 
$$\bfA_{E_T}(\Delta, \El_T(X))=\lim_{r\geq 0}\bfA_{E_T}(_{x}\Delta, \tilde{j}_{rx}\El_T(X))$$
The localization theorem dictates that 
$$\tilde{j}_{rx}\El_T(X)\simeq\tilde{j}_{rx}\El_T(X^{T(x)})\simeq\tilde{j}_{rx}c_x^\ast\El_{T'(x)}(X^{T(x)})$$
hence 
$$\bfA_{E_T}(\Delta, \El_T(X))\simeq\lim_{r\geq 0}\bfA_{E_T}(_{x}\Delta, \tilde{j}_{rx}c_x^\ast\El_{T'(x)}(X^{T(x)}))$$
$$\simeq\bfA_{E_T}(\Delta, c_x^\ast\El_{T'(x)}(X^{T(x)}))$$
\end{proof}

Whenever the chain $\Delta=(x,x_1,\dots,x_p)$ is such that $x_p$ is closed, it is easy to obtain a description of the ad\`eles in terms of singular cohomology:
\begin{lemma}\label{lemma:localizadelesclosed}
Given a chain $\Delta=(x,x_1,\dots,x_p)$ with $x_p$ closed, we have that 
$$\bfA_{E_T}(\Delta, \El_T(X))=H_{T}^{\oplus,\ast}(X^{T(x)})\otimes_{\cO(\ft)}\cO_{E_T,\widehat{\Delta}}$$
\end{lemma}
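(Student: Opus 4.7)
My plan is to combine Lemma \ref{lemma:localizadeles} with the description of Grojnowski's sheaf at closed points recalled in Subsection \ref{subsection:PerlimGroj}, exploiting a subgroup identification that is special to the situation where $x_p$ is closed. By Lemma \ref{lemma:localizadeles} we already have
$$
\bfA_{E_T}(\Delta, \El_T(X)) \simeq c_x^\ast \El_{T'(x)}(X^{T(x)}) \otimes_{\cO_{E_T}} \cO_{E_T, \widehat{\Delta}}.
$$
Since $x_p \in \Delta$ is closed, so is its image $c_x(x_p) \in E_{T'(x)}$, and the structure map $\cO_{E_{T'(x)}} \to \cO_{E_T, \widehat{\Delta}}$ factors through the completion at $c_x(x_p)$. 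Thus in the tensor product above I may replace $\El_{T'(x)}(X^{T(x)})$ by its completion $\El_{T'(x)}(X^{T(x)})_{\widehat{c_x(x_p)}}$, which by the definition of Grojnowski's sheaf at a closed point equals $H_{T'(x)}^{\oplus, \ast}\bigl((X^{T(x)})^{T'(x)(c_x(x_p))}\bigr) \otimes_{\cO(\ft'(x))} \cO_{E_{T'(x)}, \widehat{c_x(x_p)}}$.

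The crucial observation is then that $T'(x)(c_x(x_p)) \subset T'(x)$ is the \emph{trivial} subgroup. Indeed, its preimage $H$ in $T$ is the intersection of those subgroups $K \subset T$ with $T(x) \subset K$ and $x_p \in E_K$, so that $H \supset T(x)$. The reverse inclusion follows from the fact, built into Definition \ref{definition:T(x)}, that $\overline{\{x\}} \subset E_{T(x)}$ and hence $x_p \in E_{T(x)}$: this means $T(x)$ itself appears in the intersection, giving $H = T(x)$ and $T'(x)(c_x(x_p)) = \{1\}$. In particular $(X^{T(x)})^{T'(x)(c_x(x_p))} = X^{T(x)}$.

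Putting the pieces together identifies the adele with $H_{T'(x)}^{\oplus, \ast}(X^{T(x)}) \otimes_{\cO(\ft'(x))} \cO_{E_T, \widehat{\Delta}}$. To convert this to the statement of the lemma, I will use that $T(x)$ acts trivially on $X^{T(x)}$, so that the fibration $BT(x) \to BT \to BT'(x)$ yields a K\"unneth-type splitting $H_T^{\oplus, \ast}(X^{T(x)}) \simeq H_{T'(x)}^{\oplus, \ast}(X^{T(x)}) \otimes_k \cO(\ft(x))$; combined with the tensor decomposition $\cO(\ft) \simeq \cO(\ft'(x)) \otimes_k \cO(\ft(x))$, this upgrades the preceding identification to the desired form.

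The step I expect to be the most delicate is the factorization of the pullback--completion $c_x^\ast(-) \otimes_{\cO_{E_T}} \cO_{E_T, \widehat{\Delta}}$ through $\cO_{E_{T'(x)}, \widehat{c_x(x_p)}}$, which needs to be extracted from the iterative definition of the reduced adele recalled in Subsection \ref{sec:adelic}, together with the fact that $x_p \in \Delta$ is closed so that the innermost completion is genuinely an adic completion at a closed point. Once this bookkeeping is done, the remaining content of the proof is the combination of localization at a closed point and the subgroup triviality $T'(x)(c_x(x_p)) = \{1\}$.
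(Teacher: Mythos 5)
Your argument is correct, but it is organized differently from the paper's. The paper proves this lemma directly, without routing through Lemma \ref{lemma:localizadeles}: it invokes Huber's Proposition 3.2.1 to express $\bfA_{E_T}(\Delta,-)$ as iterated completion/localization operations whose innermost step is completion at the closed point $x_p$, uses the completion formula $\El_T(X)_{\widehat{x_p}}\simeq H_T^{\oplus,\ast}(X^{T(x_p)})\otimes_{\cO(\ft)}\cO_{E_T,\widehat{x_p}}$, observes that the remaining operations only affect the ring factor because the cohomology modules are finitely presented over the Noetherian ring $\cO(\ft)$, and then applies the localization theorem once more to replace $X^{T(x_p)}$ by $X^{T(x)}$. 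You instead start from Lemma \ref{lemma:localizadeles} and move the closed-point analysis to $E_{T'(x)}$; your key observation that $T'(x)(c_x(x_p))$ is trivial is correct and can be stated even more simply: $x_p\in\overline{\{x\}}\subseteq E_{T(x)}=\ker(c_x)$, so $c_x(x_p)$ is the identity of $E_{T'(x)}$, whose associated subgroup is trivial. Your K\"unneth step is also fine, though it is cleaner as the base-change statement $H_T^{\oplus,\ast}(X^{T(x)})\simeq H_{T'(x)}^{\oplus,\ast}(X^{T(x)})\otimes_{\cO(\ft'(x))}\cO(\ft)$, valid because $T(x)$ acts trivially on $X^{T(x)}$ so that $X^{T(x)}//T\simeq (X^{T(x)}//T'(x))\times_{BT'(x)}BT$ and $\cO(\ft)$ is free over $\cO(\ft'(x))$; this avoids choosing splittings, and the compatibility of the $\cO(\ft)$- and $\cO(\ft'(x))$-structures on $\cO_{E_T,\widehat{\Delta}}$ is automatic since translation by $x_p$ commutes with $c_x$ (as $c_x(x_p)=1$) and $\exp$ is natural in the torus. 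The step you flag as delicate --- that $\cO_{E_{T'(x)}}\to\cO_{E_T,\widehat{\Delta}}$ factors through the completion at $c_x(x_p)$ when $x_p$ is closed --- is exactly what Huber's Proposition 3.2.1 provides, so citing it closes that gap; it is the same external input the paper relies on. What your route buys is a derivation of the closed-chain formula from Lemma \ref{lemma:localizadeles}, making explicit the consistency of the two descriptions used in Definition \ref{def:elln}; what the paper's route buys is brevity, since it never leaves $E_T$ and needs no change-of-group bookkeeping.
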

\begin{proof}
This is a simple application of Proposition 3.2.1 from \cite{HubAd}, together with the description of the ad\`eles at closed points. The $C_{x_i}$ operations do not affect the singular cohomology modules as they are finitely presented over $H_T^{\oplus}$, since this ring is Noetherian and $X$ is a finite $T$-CW-complex. We can replace the equivariant cohomology of the locus fixed by $T(x)$ with the equivariant cohomology of the locus fixed by $T(x_p)$ by applying the localization formula.
\end{proof}

The two Lemmas \ref{lemma:localizadeles} and \ref{lemma:localizadelesclosed} allow us to deduce immediately the following Theorem:
\begin{theorem}\label{thm:equivGroj}
Let $X$ be a finite $T$-CW-complex. Then
$$\El_{T}(X;\bC)\simeq\El_{T}(X)$$
\end{theorem}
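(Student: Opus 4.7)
The plan is to prove this by induction on the rank $n$ of $T$, using that the right-hand side and left-hand side have been (or can be) presented through the same adelic descent data.

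\textbf{Base case.} For $n=1$ the statement is exactly the corollary immediately following Definition \ref{def:ell1}: we already checked that Grojnowski's sheaf on $E = E_{S^1}$ has adelic descent data at the closed points, at the generic point, and at the chain $(\eta > e)$ matching the data specifying $\El_{S^1}(X;\bC)$, and that the reduced coface maps agree because both are tensor products of the coface maps of the ad\`eles of $\cO_E$ with pullbacks/change-of-group maps in Borel equivariant singular cohomology.

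\textbf{Inductive step.} Fix $n \geq 2$ and assume the theorem for all tori of rank $< n$. By adelic reconstruction (and by the fact that, in this Noetherian setting, a coherent sheaf on $E_T$ is determined by its cosimplicial sheaf of ad\`eles), it suffices to produce an isomorphism of cosimplicial sheaves
\[
\bfA_{E_T}^{\bullet}(\El_T(X)) \;\simeq\; \bfA_{E_T}^{\bullet}(\El_T(X;\bC))
\]
that is compatible with the reduced cosimplicial structure. On a reduced chain $\Delta = (x, x_1, \dots, x_p)$ with $x_p$ \emph{not} closed, Lemma \ref{lemma:localizadeles} gives
\[
\bfA_{E_T}(\Delta, \El_T(X)) \simeq c_x^\ast \El_{T'(x)}(X^{T(x)}) \otimes_{\cO_{E_T}} \cO_{E_T,\widehat{\Delta}},
\]
and since $T'(x)$ has rank strictly smaller than $n$ (because $x$ is not closed), the inductive hypothesis identifies $\El_{T'(x)}(X^{T(x)})$ with $\El_{T'(x)}(X^{T(x)};\bC)$; this matches the first bullet of Definition \ref{def:elln}. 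On a reduced chain $\Delta$ with $x_p$ closed, Lemma \ref{lemma:localizadelesclosed} gives
\[
\bfA_{E_T}(\Delta, \El_T(X)) \simeq H_T^{\oplus,\ast}(X^{T(x)}) \otimes_{\cO(\ft)} \cO_{E_T,\widehat{\Delta}},
\]
which is exactly the second bullet of Definition \ref{def:elln}.

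\textbf{Matching the cosimplicial structure.} What remains is to check that the coface maps agree. For chains ending at a closed point, both sides use Proposition 3.2.2 of \cite{HubAd}, so the coface maps are tensor products of pullbacks in Borel equivariant singular cohomology with the coface maps of the ad\`eles of $\cO_{E_T}$ — they match tautologically. For chains with $x_p$ not closed, the coface maps on Grojnowski's side are, by construction of his sheaf, induced by the pullbacks and change-of-group maps for lower-rank tori acting on fixed loci (those that were already used in \cite{Ros1} to glue the local models); on the adelic side of Definition \ref{def:elln} they are defined to be exactly the tensor products of the ad\`ele coface maps with the pullback/change-of-group maps provided by Lemmas \ref{lemma:pullbacksn} and \ref{lemma:gpchangen}. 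By the inductive hypothesis these lower-rank pullback and change-of-group maps agree with those of Grojnowski's construction.

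\textbf{Main obstacle.} The nontrivial point is really the middle step: one needs the localization-based identification of the formal neighborhood of a non-closed chain with a pullback from a lower-rank elliptic curve, and one needs this identification to be functorial in $X$ and compatible with change of group. This is precisely the content of Lemma \ref{lemma:localizadeles} combined with the functorial statements in Corollary \ref{corollary:pullchange}, which is why the argument works out as an immediate corollary rather than requiring new input.
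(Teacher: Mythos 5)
Your proposal is correct and follows essentially the same route as the paper: the paper's proof likewise compares the adelic descent data of Grojnowski's sheaf (via Lemmas \ref{lemma:localizadeles} and \ref{lemma:localizadelesclosed}) with the data of Definition \ref{def:elln}, matches the cosimplicial structures, and concludes by adelic reconstruction, passing through the product of local ad\`eles in which the ad\`eles sit as a cosimplicial subcomplex. You merely make explicit the induction on the rank of $T$ (needed to identify $\El_{T'(x)}(X^{T(x)})$ with $\El_{T'(x)}(X^{T(x)};\bC)$ for non-closed chains), which the paper's terse proof leaves implicit.
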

\begin{proof}
Lemmas \ref{lemma:localizadeles} and \ref{lemma:localizadelesclosed} give us a description of the ad\`eles of Grojnowski's equivariant elliptic cohomology relative to reduced chains $\Delta$. The cosimplicial structure corresponds exactly to the one described in Definition \ref{def:elln}, hence we obtain an isomorphism of the ``product of local ad\`eles" 
$$\prod_{\Delta\in|E_T|_\bullet}\bfA_{E_T}(\Delta,\El_{T}(X;\bC))\simeq\prod_{\Delta\in|E_T|_\bullet}\bfA_{E_T}(\Delta,\El_{T}(X))$$ 
As the ad\`eles embed in the product of local ad\`eles as a cosimplicial subcomplex, we obtain the isomorphism
$$\El_{T}(X;\bC)\simeq\El_{T}(X)$$
\end{proof}

\subsection{Extension to compact Lie groups}
In this subsection we extend the construction from tori to compact Lie groups. There is a canonical way to do so, and is explained in \cite{Gan14}. Let $G$ be a compact Lie group, and call $T$ its maximal torus and $W$ the Weyl group. Then call 
$$E_G=E_T/W$$
\begin{definition}\label{def:ellnG}
The $k$-rationalized $G$-equivariant elliptic cohomology of a finite $G$-CW-complex $X$ is the coherent sheaf of $W$-invariants on $E_G$:
$$\El_{G}(X;k) := \El_T(X;k)^{W}$$
\end{definition}

\subsection{Cochain-level variant}\label{ssec:chains}
The same construction we developed in this section can be adapted to a cochain-level variant. The key to this construction is the formality of the algebra of cochains on the Borel quotient $\ast//T$. Let 
$$C^{\oplus,\ast}_T(X):=C^{\oplus,\ast}(X//T)$$
denote the sum-$\bZ_2$-periodization of the cochains on the Borel construction $X//T$. The algebra $C^{\oplus,0}_T(\ast)$ is formal, in particular $C^{\oplus,0}_T(\ast)\simeq\cO(\ft)$. Hence we can redefine the adelic descent data given in Definition \ref{def:elln} by replacing the cohomology of $X^{T(x)}//T$ with its cochains. The same inductive procedure yields, by an application of Groechenig's Theorem 3.1 in \cite{Groech}, an object in $\Perf(E_T)$ (as $X$ is a finite $T$-CW-complex), that we denote by 
$$\cC\El_T(X)$$
If the base field $k$ is taken to be the field of the complex numbers, this object can be equivalently constructed following Grojnowski's methodology. See the Preliminaries in \cite{MapStI} for the details. 

\section{The Adelic Decomposition for Equivariant Cohomology and K-theory}\label{section:adelicsingK}
In this section we study the adelic description of equivariant singular cohomology and equivariant K-theory, summarized in the following two lemmas. These lemmas will play a role in section \ref{section:geometry}, where we construct a periodic cyclic homology model for these two cohomology theories.

\subsection{Equivariant cohomology}
We work over a field $k$ of characteristic zero. Throughout this section, we denote by
$$\cH_{T}(X)$$
the quasi-coherent sheaf on
$$\ft^{alg}=\bA^1_{k}\otimes_{\bZ}\check{T}$$
whose global sections are
$$\Gamma(\ft^{alg},\cH_{T}^{\oplus,\ast}(X))=H_{T}(X;k)$$
The adelic decomposition of singular cohomology is obtained as a simple application of the localization formula and the observation stated in Groechenig's \cite{Groech} after Remark 1.9: if $X$ is an affine Noetherian scheme and $\cF$ a quasi-coherent sheaf, $\bA_{X}^{n}(\cF)\simeq \cF(X)\otimes_{\cO_{X}(X)}\bA_{X}^{n}$. 

First we need to introduce the notion of subgroup of $T$ associated to an element of the Lie algebra $\ft$.

\begin{definition}\label{def:dictH}
Let $\xi\in\ft^{alg}$ be a point of $\ft^{alg}$. We define 
$$T(\xi)\leq T$$
to be the smallest subgroup of $T$ such that the closure of the point $\xi$, $\overline{\{\xi\}}$, is contained in 
$$\ft^{alg}_{T(\xi)}:=\bA^1_k\otimes_{\bZ}\check{T}(\xi)$$
where $\check{T}(\xi)$ is the cocharacter lattice of $T(\xi)$.
\end{definition}

\begin{lemma}\label{lemma:AdSC}
Let $X$ be a finite $T$-CW-complex and let $\Delta=(\xi>\xi_1>\dots>\xi_k)$ be a chain of length $k$ on $\ft_{\bC}$. Then
$$\bA_{\ft^{alg}}\left (\Delta, \cH_{T}(X)\right )\simeq H_{T}^{\oplus}(X^{T(\xi)})\otimes_{\cO(\ft^{alg})}\bA_{\ft^{alg}}\left (\Delta,\cO_{\ft^{alg}}\right )$$
\end{lemma}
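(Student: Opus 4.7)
My plan is to combine the two ingredients that the author highlights just before the lemma: the Atiyah--Borel--Segal localization theorem in equivariant singular cohomology, and Groechenig's affine observation, which says that for an affine Noetherian scheme $X$ and a quasi-coherent sheaf $\cF$ one has $\bA_{X}^{n}(\cF)\simeq \cF(X)\otimes_{\cO(X)}\bA_{X}^{n}$, applied chain by chain.

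First, since $X$ is a finite $T$-CW-complex, $\cH_T(X)$ is a coherent sheaf on the affine Noetherian scheme $\ft^{alg}$. Applying Groechenig's affine observation to the fixed chain $\Delta=(\xi>\xi_1>\dots>\xi_k)$ gives
\[
\bA_{\ft^{alg}}(\Delta,\cH_T(X))\simeq H_T^{\oplus,\ast}(X)\otimes_{\cO(\ft^{alg})}\bA_{\ft^{alg}}(\Delta,\cO_{\ft^{alg}}).
\]
This reduces the statement to checking that after tensoring with the adele $\bA_{\ft^{alg}}(\Delta,\cO_{\ft^{alg}})$, the restriction map $H_T^{\oplus,\ast}(X)\to H_T^{\oplus,\ast}(X^{T(\xi)})$ becomes an isomorphism.

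For this, I would invoke the Atiyah--Borel--Segal localization theorem: the restriction $H_T^{\oplus,\ast}(X)\to H_T^{\oplus,\ast}(X^{T(\xi)})$ is an isomorphism of $\cO(\ft^{alg})$-modules after inverting the Euler classes of weights of $T$ that are non-trivial on $T(\xi)$. By Definition \ref{def:dictH}, such weights are precisely those that do not vanish at $\xi$; hence their Euler classes are units in the complete local ring $\cO_{\ft^{alg},\widehat{\xi}}$. Now the inductive definition of the ad\`eles says that $\bA_{\ft^{alg}}(\Delta,\cO_{\ft^{alg}})$ is built by first applying $\lim_r \tilde j_{r\xi}$, i.e.\ completing at the maximal point $\xi$ of $\Delta$, and then taking ad\`eles along the shorter chain $_\xi\Delta$. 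In particular, this module is naturally a module over $\cO_{\ft^{alg},\widehat{\xi}}$, so those Euler classes act as units on it, and the localization isomorphism persists after tensoring. Combining the two steps yields the desired
\[
\bA_{\ft^{alg}}(\Delta,\cH_T(X))\simeq H_T^{\oplus,\ast}(X^{T(\xi)})\otimes_{\cO(\ft^{alg})}\bA_{\ft^{alg}}(\Delta,\cO_{\ft^{alg}}).
\]

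I expect the one genuinely non-trivial point to be the second step: carefully matching the relevant multiplicative set in the localization theorem with classes that are already inverted inside $\bA_{\ft^{alg}}(\Delta,\cO_{\ft^{alg}})$. This should be immediate for the closed point case ($\xi$ closed, so the adele is the full completion $\cO_{\ft^{alg},\widehat{\xi}}$), and it extends to generic $\xi$ by the observation that the subgroup $T(\xi)$ is chosen precisely so that weights non-vanishing at $\xi$ and weights non-trivial on $T(\xi)$ coincide, which is what makes the Euler classes invertible in the formal completion at $\xi$.
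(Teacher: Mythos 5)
Your proposal is correct and follows essentially the same route as the paper: apply Groechenig's affine observation to identify $\bA_{\ft^{alg}}(\Delta,\cH_T(X))$ with $H_T^{\oplus}(X)\otimes_{\cO(\ft^{alg})}\bA_{\ft^{alg}}(\Delta,\cO_{\ft^{alg}})$, then use the localization theorem to see that restriction to $X^{T(\xi)}$ becomes an isomorphism after tensoring with the ad\`eles. Your added justification --- that weights non-trivial on $T(\xi)$ do not vanish at $\xi$ and hence act invertibly on the ad\`ele, which is a module over the completed local ring at the maximal point of the chain --- is exactly the detail implicit in the paper's appeal to localization.
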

\begin{proof}
Groechenig's observation yields
$$\bA_{\ft^{alg}}\left (\Delta, \cH_{T}(X)\right )\simeq H_{T}^{\oplus}(X)\otimes_{\cO(\ft^{alg})}\bA_{\ft^{alg}}\left (\Delta,\cO_{\ft^{alg}}\right )$$
and the localization theorem implies that the pullback along the inclusion $X^{T(\xi)}\hookrightarrow X$ becomes an isomorphism after tensoring with the ad\`eles $\bA_{\ft^{alg}}(\Delta,\cO_{\ft^{alg}})$ over the base ring $\cO(\ft^{alg})$.
\end{proof}

\subsection{Equivariant K-theory}\label{ssec:eqvKThy}
The adelic decomposition of equivariant K-theory can be proved as in the singular cohomology case, as long as the chain starts from a closed point.
We work over a field $k$ of characteristic zero. In particular, we denote by $$\cK_{T}(X)$$ the quasi-coherent sheaf on 
$$T^{alg}:=\bG_{m,k}\otimes_{\bZ}\check{T}$$
with global sections given by
$$\Gamma(T^{alg},\cK_{T}(X))=K_{T}^{\ast}(X)\otimes_{\bZ}k$$
Similarly to the equivariant cohomology case, we need to introduce the notion of subgroup of $T$ associated to an element of $T$.

\begin{definition}\label{def:dictK}
Let $x\in T^{alg}$ be a point of $T^{alg}$. We define 
$$T(x)\leq T$$
to be the smallest subgroup of $T$ such that the closure $\overline{\{x\}}$ of the point $x$ is contained in 
$$T(x)^{alg}=\bG_m\otimes_{\bZ}\widecheck{T(x)}$$
\end{definition}
\begin{lemma}\label{lemma:AdK}
Let $X$ be a finite $T$-CW-complex and let $\Delta=(x>x_1>\dots>x_k)$ be a chain of length $k$ on $T_{\bC}$, such that $x_k$ is a closed point. Then
$$\bA_{T_{\bC}}\left (\Delta, \cK_{T}(X)\right )\simeq H_{T}^{\oplus}(X^{T(x)})\otimes_{\cO(\ft)}\bA_{T_{\bC}}\left (\Delta,\cO_{T_{\bC}}\right )$$
\end{lemma}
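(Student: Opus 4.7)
The plan is to mirror the proof of Lemma \ref{lemma:AdSC}, with one additional ingredient required in the K-theoretic setting: the local identification between completed equivariant K-theory at a point $x\in T^{alg}$ and completed equivariant cohomology at $0\in\ft^{alg}$, furnished by the equivariant Chern character together with translation on the torus. The hypothesis that $x_k$ is closed is exactly what makes this comparison available, as it guarantees that the chain completes along a maximal ideal of $\cO(T^{alg})$, around which $T^{alg}$ is formally identified with $\ft^{alg}$.

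First, by Groechenig's observation after Remark 1.9 in \cite{Groech} (applied to the affine Noetherian scheme $T_\bC$), one has
$$\bA_{T_\bC}\bigl(\Delta,\cK_T(X)\bigr)\simeq K_T(X)\otimes_{\cO(T^{alg})}\bA_{T_\bC}\bigl(\Delta,\cO_{T_\bC}\bigr).$$
Next, I would invoke Segal's localization theorem \cite{SegK}: the pullback $K_T(X)\to K_T(X^{T(x)})$ becomes an isomorphism after inverting all elements of $R(T)$ that do not vanish on $T(x)$. Since $x$ is the initial point of $\Delta$ and $x_k$ is closed, the ring $\bA_{T_\bC}(\Delta,\cO_{T_\bC})$ already inverts precisely such elements, exactly as in the proofs of Lemma \ref{lemma:AdSC} and Lemma \ref{lemma:localizadelesclosed}. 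This identifies the right-hand side with
$$K_T(X^{T(x)})\otimes_{\cO(T^{alg})}\bA_{T_\bC}\bigl(\Delta,\cO_{T_\bC}\bigr).$$

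Finally, I would transport the module structure through the exponential. Since $\Delta$ terminates at the closed point $x_k$, the ring $\bA_{T_\bC}(\Delta,\cO_{T_\bC})$ is an algebra over the completion $\cO_{T_\bC,\widehat{x_k}}$. Translation by a lift of $x_k$ together with the formal exponential $\exp\colon\ft_{\widehat 0}\xrightarrow{\sim}T_{\widehat 1}$ yields an isomorphism $\cO_{T_\bC,\widehat{x_k}}\simeq\cO_{\ft,\widehat 0}$, which promotes $\bA_{T_\bC}(\Delta,\cO_{T_\bC})$ to an $\cO(\ft)$-algebra. Under this identification, the equivariant Chern character identifies $K_T(X^{T(x)})_{\widehat{x_k}}$ with $H_T^{\oplus}(X^{T(x)})_{\widehat 0}$ (this is the form of the Atiyah–Segal completion theorem used implicitly in \cite{Ros3}); since $X$ is a finite $T$-CW-complex both modules are finitely generated over Noetherian rings, so the completion is exact and compatible with tensor product, giving the desired presentation
$$\bA_{T_\bC}\bigl(\Delta,\cK_T(X)\bigr)\simeq H_T^{\oplus}(X^{T(x)})\otimes_{\cO(\ft)}\bA_{T_\bC}\bigl(\Delta,\cO_{T_\bC}\bigr).$$

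The main obstacle I anticipate is the careful bookkeeping of the two different module structures on $\bA_{T_\bC}(\Delta,\cO_{T_\bC})$: the tautological $\cO(T^{alg})$-structure coming from adelic descent on $T_\bC$, and the $\cO(\ft)$-structure obtained via translation and the formal exponential. One must check that, once transferred through the Chern character, the ensuing tensor product does not depend on auxiliary choices (e.g.\ the lift of $x_k$), and that this is compatible with the cosimplicial maps of the adelic complex — essentially the K-theoretic analogue of the compatibilities established tacitly in Section \ref{section:adelicGroj}.
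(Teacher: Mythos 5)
Your argument is essentially the paper's: the proof there combines the same three ingredients --- the completion formula $\cK_T(X)_{\widehat{x_k}}\simeq H_T^{\oplus,\ast}(X^{T(x_k)})\otimes_{\cO(\ft)}\cO_{T,\widehat{x_k}}$ via the identifications $\cO_{\ft,\widehat{0}}\simeq\cO_{T,\widehat{1}}\simeq\cO_{T,\widehat{x_k}}$ (exponential plus translation), finite presentation of the equivariant cohomology modules to commute the adelic operations with tensor products (Huber's Proposition 3.2.1, playing the role of Groechenig's observation in your write-up), and the localization theorem --- only in the opposite order, completing at $x_k$ first and then localizing from $X^{T(x_k)}$ to $X^{T(x)}$, whereas you localize in K-theory first and then complete. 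Your bookkeeping of the two module structures, together with the (correct, and worth making explicit) fact that $T(x_k)\subseteq T(x)$ acts trivially on $X^{T(x)}$ so the translated completion theorem applies verbatim, is precisely the implicit content of the paper's identification of completed local rings, so there is no gap.
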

\begin{proof}
We apply Proposition 3.2.1 in \cite{HubAd} to the formula computing the completion of equivariant K-theory at a closed point:
$$\cK_{T}(X)_{\widehat{x_k}}\simeq H_T^{\prod,\ast}(X^{T(x_k)})\simeq H_T^{\oplus,\ast}(X^{T(x_k)})\otimes_{\cO(\ft)}\cO_{T,\widehat{x_k}}$$
where we identified the completions $\cO_{\ft,\widehat{0}}\simeq\cO_{T,\widehat{1}}\simeq \cO_{T,\widehat{x_k}}$ (the former isomorphism is the exponential while the latter follows from the group structure on $T$). The cohomology groups $H_T(X^{T(x)})$ are finitely presented over $H(BT)$ since $X$ is a finite $T$-CW-complex.
The final statement follows from the localization formula.
\end{proof}

We observe at this point that Lemma \ref{lemma:AdK} can be used as input data for an inductive construction akin to the one explained in Section \ref{section:adelicGroj} in the elliptic case. All the proofs of Lemmas \ref{lemma:pullbacks1}, \ref{lemma:grpch11}, \ref{lemma:grpchS1}, \ref{lemma:pullbacksn}, \ref{lemma:gpchangen} and Corollary \ref{corollary:pullchange} go through almost unchanged --- the role of $E$ is now played by $\bG_m$ --- thus the same exact construction produces $k$-rationalized equivariant K-theory out of singular cohomology and induction. 
Indeed, we could propose the following 
\begin{definition}\label{def:Kn}
Let $k$ be a field of characteristic $0$, $T$ be a real torus of rank $n$ and $X$ be a finite $T$-CW-complex. The \emph{adelic} $k$-rationalized $T$-equivariant K-theory of $X$
$$\cK_{T}^{ad}(X;k)$$
is the coherent sheaf on $T^{alg}$ inductively defined by the following adelic descent data:
\begin{itemize}
\item given a reduced chain $\Delta=(x,x_1,\dots,x_p)$,\\
$\bfA_{T^{alg}}(\Delta, \cK_T^{ad}(X;k))=c_{x}^{\ast}\cK_{T'(x)}^{ad}(X^{T(x)};k)\otimes_{\cO_{T^{alg}}}\cO_{T^{alg},\widehat{\Delta}}$;
\item given a reduced chain $\Delta=(x,x_1,\dots,x_p)$, if $x_p$ is closed,\\
$\bfA_{T^{alg}}(\Delta, \cK_T^{ad}(X;k))=H_{T}^{\oplus, \ast}(X^{T(x)};k)\otimes_{\cO(\ft)}\cO_{T^{alg},\widehat{\Delta}}$.
\end{itemize}
where 
$$c_x:T\to T'(x)$$
The cosimplicial structure is the following:
\begin{itemize}
\item if the chain $\Delta=(x,x_1,\dots,x_p)$ is such that $x_p$ is closed, the relevant coface maps for removing a point of the chain are given by tensor products of the corresponding coface maps for the ad\`eles for the structure sheaf of $T^{alg}$ and pullback maps in equivariant cohomology;
\item If $x_p$ is not closed, the coface maps are tensor products of the corresponding maps of the ad\`eles of $\cO_{T^{alg}}$ and pullback and change of group maps in adelic $k$-rationalized equivariant K-theory with respect to tori of rank strictly smaller than $\mathrm{rk}(T)$. 
\end{itemize}
\end{definition}
This definition is well-posed, as the rank one case can be treated as in the case of elliptic cohomology and all the lemmas needed hold, as discussed above. Then Lemma \ref{lemma:AdK} proves the following
\begin{theorem}\label{thm:equivRosu}
Let $X$ be a finite $T$-CW-complex, for $T$ a real torus. Then 
$$\cK_{T}^{ad}(X;k)\simeq\cK_{T}(X)$$
as $\bZ_2$-periodic sheaves on $T^{alg}$.
\end{theorem}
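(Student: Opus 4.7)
The plan is to mimic almost verbatim the strategy used to prove Theorem \ref{thm:equivGroj} in the elliptic setting, with $T^{alg}$ replacing $E_T$ and the K-theoretic localization theorem of Segal replacing the elliptic localization used by Grojnowski/Ro\c{s}u. The objective is to show that the cosimplicial sheaf $\bfA_{T^{alg}}^\bullet(\cK_T(X))$ and the cosimplicial sheaf assembled from the data in Definition \ref{def:Kn} agree chain-by-chain, with matching cosimplicial structure maps; since the ad\`eles embed as a cosimplicial subcomplex of the product of local ad\`eles, an isomorphism at the level of local ad\`eles is enough.

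First I would prove the analog of Lemma \ref{lemma:localizadeles} in K-theory: for any reduced chain $\Delta=(x,x_1,\dots,x_p)$ on $T^{alg}$, there is an isomorphism
\[
\bfA_{T^{alg}}(\Delta,\cK_T(X))\;\simeq\;c_x^\ast \cK_{T'(x)}(X^{T(x)})\otimes_{\cO_{T^{alg}}}\cO_{T^{alg},\widehat{\Delta}}.
\]
The argument is identical to the elliptic one: unwinding the inductive definition of ad\`eles gives
$\bfA_{T^{alg}}(\Delta,\cK_T(X))=\lim_{r\geq 0}\bfA_{T^{alg}}({}_x\Delta,\tilde j_{rx}\cK_T(X))$,
and Segal's localization theorem implies $\tilde j_{rx}\cK_T(X)\simeq\tilde j_{rx}\cK_T(X^{T(x)})\simeq\tilde j_{rx}c_x^\ast\cK_{T'(x)}(X^{T(x)})$, since $T(x)$ acts trivially on $X^{T(x)}$ and equivariant K-theory for a subgroup acting trivially factors through the quotient. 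Combined with Lemma \ref{lemma:AdK} for the closed-endpoint case, this gives descriptions of the ad\`eles for every reduced chain in exactly the two formats used in Definition \ref{def:Kn}.

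Next I would check that the cosimplicial coface maps on the ad\`eles of $\cK_T(X)$ match those prescribed in Definition \ref{def:Kn}. There are two types of faces to verify: (i) removing a non-final point $x_i$ with $i<p$, where the coface map is the tensor product of the identity on the equivariant K-theory/cohomology factor with the structural coface map of $\bA_{T^{alg}}^\bullet$; and (ii) removing the top point $x$, where one must identify the induced map with a pullback along the inclusion $X^{T(x)}\hookrightarrow X^{T(x')}$ tensored with a coface map on the adelic structure sheaf. This is a naturality check, using that the isomorphism in the first step is natural in equivariant pullbacks and change-of-group, exactly as in the elliptic case; the non-closed endpoint presentation is used precisely so that the inductive cosimplicial structure is built out of pullbacks and change-of-group in strictly lower rank, which is the content that makes Definition \ref{def:Kn} well-posed (Lemmas \ref{lemma:pullbacks1}--\ref{lemma:gpchangen} carry over verbatim with $\bG_m$ in place of $E$).

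Finally, having matched local ad\`eles and cosimplicial structure, I would conclude that
\[
\prod_{\Delta\in|T^{alg}|_\bullet}\bfA_{T^{alg}}(\Delta,\cK_T^{ad}(X;k))\;\simeq\;\prod_{\Delta\in|T^{alg}|_\bullet}\bfA_{T^{alg}}(\Delta,\cK_T(X))
\]
as cosimplicial objects, and use Huber's Theorem 2.4.1 in \cite{HubAd} to deduce the isomorphism of adelic cosimplicial sheaves $\bfA_{T^{alg}}^\bullet(\cK_T^{ad}(X;k))\simeq\bfA_{T^{alg}}^\bullet(\cK_T(X))$; Beilinson's adelic resolution (or Groechenig's reconstruction theorem, applicable in the $\bZ_2$-periodic setting by the final remark of Section \ref{sec:adelic}) then yields $\cK_T^{ad}(X;k)\simeq\cK_T(X)$. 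The main obstacle I anticipate is the localization step at non-closed points: one must know that Segal's localization theorem lifts from closed-point completions to the thickened local rings $\cO_{T^{alg},x}/\mathfrak{m}_x^r$ appearing in the definition of the ad\`eles, and that the resulting isomorphism is natural enough to commute with the change-of-group maps $c_x^\ast$. Once this K-theoretic localization-for-chains is cleanly formulated, the rest of the comparison is formal.
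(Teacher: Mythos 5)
Your proposal is correct and follows essentially the same route as the paper: the paper's proof is a terse version of exactly this argument, deferring to the proof of Theorem \ref{thm:equivGroj} and citing Lemma \ref{lemma:AdK} for the identification of the products of local ad\`eles, with the non-closed-endpoint chains handled, as you do, by Segal's localization theorem together with the inductive (lower-rank) structure of Definition \ref{def:Kn}. The only difference is presentational: you state explicitly the K-theoretic analogue of Lemma \ref{lemma:localizadeles}, which the paper leaves implicit (it appears in substance later, in the proof of Proposition \ref{thm:HPK}, via the affineness of $T^{alg}$ and localization), so your worry about localization at non-closed points is resolved exactly as in the elliptic case.
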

\begin{proof}
This proof goes the same as the proof of Theorem \ref{thm:equivGroj}. Lemma \ref{lemma:AdK} gives an equivalence 
$$\prod_{\Delta\in|T^{alg}|_\bullet}\bfA_{T^{alg}}(\Delta,\cK_{T}(X;k))\simeq\prod_{\Delta\in|T^{alg}|_\bullet}\bfA_{T^{alg}}(\Delta,\cK_{T}(X))$$ 
which gives the desired isomorphism.
\end{proof}

Theorem \ref{thm:equivRosu} is an algebraic incarnation of Ro\c{s}u's theorem \cite{Ros3}. Indeed, Ro\c{s}u proves that it is possible to construct the analytic extension of complexified equivariant K-theory only in terms of singular cohomology of fixed loci in $X$. Our Theorem \ref{thm:equivRosu} proves his result without the need to extend scalars by holomorphic functions, and moreover holds over all fields $k$ of characteristic zero. In particular, it answers the question posed by Ro\c{s}u himself of the existence of such an algebraic construction.

\begin{remark}
In particular, if $G$ is a compact Lie group, we could define \emph{adelic} $G$-equivariant K-theory, $\cK_{G}^{ad}(X;k)$, as the Weyl invariants of the adelic $T$-equivariant K-theory, for a maximal torus $T$. Then it would follow immediately that $\cK_{G}^{ad}(X;k)\simeq\cK_G(X)$.
\end{remark}

\section{Geometric presentations for Equivariant Cohomology and K-theory}\label{section:geometry}
In this section we provide a presentation of equivariant K-theory and equivariant singular cohomology of the analytification of a smooth algebraic variety via Hochschild homology counterparts, in terms of objects in derived algebraic geometry. This will be a byproduct of the adelic decomposition for equivariant cohomology and K-theory. 
A similar presentation appears in \cite{MapStI} in the context of rationalized equivariant elliptic cohomology. 
The K-theory case has been analysed by Halpern-Leistner--Pomerleano in \cite{HLPom}; their proof does not make use of adelic descent techniques but rather of Blanc's topological K-theory of a dg-category over $\bC$ \cite{Blanc}. 
They prove the equivalence for smooth quasi-projective schemes acted on by a reductive group so that the quotient admits a \emph{semi-complete KN stratification}, Definition 1.1 of \cite{HLPom}. Our proof is for reductive groups, but we do not require the existence of a semi-complete KN stratification. Moreover, our proof uses more elementary mathematics compared to Halpern-Leistner and Pomerleano's.

The analogous statement for equivariant cohomology --- Theorem \ref{thm:HPlHG} --- is well-known, even though it does not appear in the literature in the formulation we give in this paper. In particular, a version of the theorem for general Artin stacks follows immediately from work of Pantev--To\"en--Vaqui\'e--Vezzosi \cite{PTVV} and Calaque--Pantev--To\"en--Vaqui\'e--Vezzosi \cite{CPTVV}. We include it in the paper only for completeness reasons, and to provide a proof which uses the same techniques that can be  applied to equivariant K-theory and equivariant elliptic cohomology (for this last case see \cite{MapStI}).
\begin{remark}
The construction appearing in \cite{MapStI} for rationalized equivariant elliptic cohomology can be compared, over any field $k$ of characteristic zero, with a \emph{de Rham} variant of the adelic construction of equivariant elliptic cohomology in Section \ref{section:adelicGroj}. In particular, the main result in \cite{MapStI}, Theorem 6.10, would extend over a general field $k$ of characteristic zero to an isomorphism with such a de Rham variant.
\end{remark}

Throughout this section we work over a fixed algebraically closed base field $k$ of characteristic 0.

\subsection{Equivariant K-theory}
In this subsection we prove a small variation of Halpern-Leistner--Pomerleano's Theorem 2.17 in \cite{HLPom}. They prove that, if $X$ is a smooth quasi-projective scheme acted on by an algebraic group $G$ so that $[X/G]$ admits a semi-complete KN stratification (see Definition 1.1 in \cite{HLPom}) there is an equivalence 
$$\mHP(\Perf[X/G])\simeq K^{top}([X/G])\otimes{\bC}$$
between the periodic cyclic homology of $[X/G]$ and Blanc's topological K-theory of the dg-category $\Perf([X/G])$. 

The Proposition we prove in this section is the following:
\begin{proposition}\label{thm:HPK}
Let $X$ be a smooth variety over $\bC$ acted on by an algebraic torus $T$. There is an equivalence of $\bZ_2$-periodic coherent sheaves on $T$
$$\pi_\ast\cHP([X/T])\simeq\cK_{T^{\an}}(X^{\an})$$
which is natural in $X$ with respect to $T$-equivariant maps.
\end{proposition}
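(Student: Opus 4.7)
The plan is to apply adelic descent on the algebraic torus $T$ to both sides. Both $\pi_\ast\cHP([X/T])$ and $\cK_{T^{\an}}(X^{\an})$ are $\bZ_2$-periodic coherent sheaves on $T$, so by Groechenig's reconstruction theorem it suffices to produce a natural isomorphism of the associated adelic cosimplicial objects. The target has an explicit description given by Lemma \ref{lemma:AdK}: for a reduced chain $\Delta=(x>\dots>x_k)$ with $x_k$ closed,
$$\bfA_{T}(\Delta,\cK_{T^{\an}}(X^{\an}))\simeq H^{\oplus,\ast}_{T^{\an}}((X^{\an})^{T(x)};\bC)\otimes_{\cO(\ft)}\bfA_{T}(\Delta,\cO_T),$$
with coface maps built from pullbacks in singular cohomology and the structural coface maps of $\bfA_T$. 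I would prove that $\pi_\ast\cHP([X/T])$ admits exactly the same adelic description.

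\textbf{Local computation at closed points.} The first substantive step is to compute the completion $\pi_\ast\cHP([X/T])_{\widehat{t}}$ at a closed point $t\in T$. The derived loop space projects to $\cL BT$ whose affinization is $T$, and the fiber of $\cL[X/T]\to T$ over $t$ is the inertia $[X^{T(t)}/T]$ --- here the fixed locus of the single element $t$ coincides with $X^{T(t)}$ since $t$ generates a Zariski-dense subgroup of the subtorus $T(t)$. Applying Chen's localization theorem for periodic cyclic homology \cite{HChen} in the formal neighborhood of $t$, I would establish
$$\pi_\ast\cHP([X/T])_{\widehat{t}}\simeq\pi_\ast\cHP([X^{T(t)}/T])_{\widehat{t}}.$$
Since $T(t)$ acts trivially on $X^{T(t)}$, splitting the extension $1\to T(t)\to T\to T'(t)\to 1$ yields $[X^{T(t)}/T]\simeq[X^{T(t)}/T'(t)]\times BT(t)$, and the HKR theorem applied factor by factor identifies the right-hand side with $H^{\oplus,\ast}_{T^{\an}}((X^{\an})^{T(t)};\bC)\otimes_{\cO(\ft)}\cO_{T,\widehat{t}}$, matching the K-theory formula exactly.

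\textbf{Extension to ad\`eles on chains and cosimplicial structure.} For a reduced chain $\Delta$ ending at a closed point $x_k$, the ad\`ele is produced from the completion at $x_k$ by iterated Huber operations $C_{x_i}$ (as in the proof of Lemma \ref{lemma:AdK}), which are harmless on finitely presented modules, so the closed-point computation propagates immediately to all such chains. To match cosimplicial structures, I would check that the coface maps on the HP side --- induced by the inertia presentation together with HKR functoriality --- agree with those described in Lemma \ref{lemma:AdK}: namely, for $x>x'$ the containment $T(x')\subseteq T(x)$ yields an inclusion $X^{T(x)}\hookrightarrow X^{T(x')}$, and the HKR identification is natural under such inclusions. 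Embedding the ad\`eles into the product of local ad\`eles as in the proof of Theorem \ref{thm:equivRosu} and invoking Groechenig's theorem then yields the global isomorphism. Naturality in $X$ with respect to $T$-equivariant maps is automatic from the naturality of Chen localization, HKR, and the Cartesian squares defining $X^{T(t)}$.

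\textbf{Main obstacle.} The principal difficulty is carrying out the localization theorem for $\cHP$ in the torus-equivariant stacky setting \emph{as a statement about sheaves on $T$}, not merely about vector spaces at individual points. Concretely, one must verify that Chen's localization restricts well to formal neighborhoods of closed points of $T$, and that the resulting identification with $H^{\oplus,\ast}_{T^{\an}}((X^{\an})^{T(t)};\bC)\otimes_{\cO(\ft)}\cO_{T,\widehat{t}}$ is compatible with the Beilinson cosimplicial resolution --- i.e.\ that the HKR and localization isomorphisms intertwine the coface maps of the two cosimplicial objects. Once this chain-level compatibility is pinned down, Groechenig's reconstruction does the rest.
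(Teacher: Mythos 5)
Your outline reproduces the paper's strategy for the chains it covers, but it has a genuine gap: you only ever compute and compare the ad\`eles for (reduced) chains $\Delta=(x>\dots>x_k)$ whose minimal point $x_k$ is closed. Groechenig's reconstruction requires matching the \emph{entire} cosimplicial adelic object, and on $T$ (a scheme of dimension $\mathrm{rk}(T)$) this includes chains whose minimal point is not closed --- already in rank one the single chain $(\eta)$ at the generic point, and in rank $\geq 2$ chains such as $(\eta)$, $(x)$ and $(\eta>x)$ with $x$ of positive dimension. For these chains neither Lemma \ref{lemma:AdK} nor the completion theorem of Chen (Theorem 4.3.2 of \cite{HChen}), which you invoke at closed points, gives a description in terms of Borel-equivariant singular cohomology, so your claim that the closed-point computation ``propagates immediately'' does not reach them; matching only the chains ending at closed points does not determine the totalization, so the final appeal to adelic reconstruction is not justified as written.

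The paper closes exactly this gap by an induction on the rank of $T$ (with Lemma \ref{lemma:HPK1} as the base case, where the generic point is handled by Chen's localization theorem, Theorem 3.2.12 of \cite{HChen}): for a chain whose maximal point is $x$ and whose minimal point is not closed, affineness of $T$ gives $\bA_{T}(\Delta,\cHP([X/T]))\simeq\mHP([t_0X^{T(x)}/T])\otimes_{\cO(T)}\cO_{T,\Delta}$ and the analogous formula in K-theory (via loop-space localization and Segal localization respectively); since $T(x)$ acts trivially on $t_0X^{T(x)}$ one reduces the equivariance group to $T'(x)$, which has strictly smaller rank, and then the inductive hypothesis --- crucially including its naturality in $X$, needed to match the coface map that removes $x$ from the chain --- produces the comparison on these chains. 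Your proposal does use the group-splitting idea at closed points, but without the rank induction (or some substitute description of the ad\`eles at non-closed minimal points) the argument is incomplete. The closed-point portion of your sketch, including the use of Huber's operations and the cosimplicial compatibility via functoriality of HKR and localization, is essentially the paper's argument and is fine.
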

In the statement above, $\cHP([X/T])$ denotes the $\bZ_2$-periodic quasi-coherent complex on 
$$T=\Spec \mHP^0([\Spec \bC/T])$$ 
associated to periodic cyclic homology, viewed as a module over $\mHP^0([\Spec \bC/T])$. Similarly, $\cK_{T^\an}(X^\an)$ is the $\bZ_2$-periodic quasi-coherent sheaf on 
$$T=\Spec K_{T^\an}^{0}(\ast)\otimes_{\bZ}\bC$$ 
associated to the $\bC$-rationalized topological K-theory $K_{T^\an}(X^\an)$ viewed as a module over $K_{T^\an}^{0}(\ast)\otimes_{\bZ}\bC$. 

Proposition \ref{thm:HPK} can be extended to reductive groups, by keeping track of the Weyl group action. We do so in Theorem \ref{thm:HPKG}. 
\begin{remark}
The assumption that $X$ is a variety is enough to ensure that $\cHP([X/T])$ belongs to $\Coh(T)^{\bZ_2}\simeq\Perf(T)^{\bZ_2}$. Indeed, this is determined by the finite presentation of the equivariant cohomology of the analytification of $X$ over the base $H_{G^\an}(\ast)$, which is ensured by the fact that, for $X$ a separated scheme of finite type over $\bC$, $X^\an$ is homotopy equivalent to a finite CW-complex (see the discussion in \cite{HChen} in the proof of Corollary 4.3.21).
\end{remark}

\begin{remark}
Let $X$ be a variety acted on by an algebraic torus $T$. Let 
$$q:\cL[X/T]\to T$$
be the structure map. We define:
$$\cHH([X/T]) := q_{\ast}\cO_{\cL[X/T]}$$
The $S^1$-action on the loop space equips this quasi-coherent sheaf with a lift from $\QCoh(T)$ to $\QCoh(T)^{S^1}$ (where $S^1$ acts trivially on $T$). $\cHP([X/T])$ is equivalently the image of $\cHH([X/T])$ with its $S^1$-action in the $\bZ_2$-periodic category of quasi-coherent sheaves 
$$\QCoh(T)^{\bZ_2}$$ 
defined as the Tate construction of $\QCoh(T)$ with respect to a trivial action of $S^1$ on $T$. Such procedure has been defined by Preygel in \cite{PreyTate}. For small $k$-linear categories with an action of $S^1$, it amounts to a base change of the $S^1$-fixed locus from $k[[u]]$ to $k((u))$. For presentable categories there is an additional \emph{regularization} step involving t-structures, as explained in \cite{PreyTate}.
\end{remark}

The arguments we produce to prove Proposition \ref{thm:HPK} are parallel to the ones appearing in the proof of Proposition 6.8 and Theorem 6.9 in \cite{MapStI}. 

\begin{lemma}\label{lemma:HPK1}
Let $X$ be a smooth variety over $\bC$ acted on by an algebraic torus $T$ of rank 1. There is an equivalence of $\bZ_2$-periodic coherent sheaves
$$\pi_\ast \cHP([X/T])\simeq\cK_{T^{\an}}(X^{\an})$$
which is natural in $X$ with respect to $T$-equivariant maps.
\end{lemma}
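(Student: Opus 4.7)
The strategy is to verify the equivalence via adelic descent on $T \simeq \bG_m$, which is a $1$-dimensional Noetherian scheme, so by Groechenig's reconstruction theorem it suffices to match the two sheaves on closed points, on the generic point, and on length-one chains $(\eta > x)$, together with the coface maps of the reduced cosimplicial structure. On the right-hand side, Lemma \ref{lemma:AdK} already gives the adelic pieces of $\cK_{T^\an}(X^\an)$: at a closed point $x$ we have $H_{T^\an}^{\oplus,\ast}(X^{\an, T^\an(x)}) \otimes_{\cO(\ft)} \cO_{T,\widehat{x}}$, and the remaining pieces at $\eta$ and $(\eta > x)$ are determined by the localization theorem applied to the inclusion $X^T \hookrightarrow X$, so that one gets $H^{\oplus,\ast}(X^{\an,T^\an}) \otimes_{k} \cO_{T,\eta}$ and $H^{\oplus,\ast}(X^{\an,T^\an}) \otimes_{k} \Frac \cO_{T,\widehat{x}}$ respectively.

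For the left-hand side, the plan is to compute the adelic pieces of $\pi_\ast \cHP([X/T])$ using Chen's localization theorem \cite{HChen} for the derived loop space. Concretely, at a closed point $x \in T$, the restriction $\cL[X/T]|_{\widehat{x}}$ is equivalent to the formal neighbourhood of the fixed locus $\cL[X^{T(x)}/T]|_{\widehat{x}}$, so after applying the Tate construction the completion $\pi_\ast \cHP([X/T])_{\widehat{x}}$ becomes the periodic cyclic homology of $[X^{T(x)}/T]$ completed at $x$. Since $T$ acts on $X^{T(x)}$ through the quotient torus $T/T(x)$, the loop space factors as $[X^{T(x)}/T(x)] \times \cL B(T/T(x))$ near $x$, and an HKR-type argument (using the example $\Aff(\cL BG) \simeq \Spec K^0_{G^c}(\ast)$ from the preliminaries, combined with the identification of equivariant cohomology of fixed loci) produces exactly $H_{T^\an}^{\oplus,\ast}(X^{\an,T^\an(x)}) \otimes_{\cO(\ft)} \cO_{T,\widehat{x}}$. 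A parallel computation at the generic point --- where localization identifies $\cL[X/T]|_{\eta}$ with $\cL[X^T/T]|_{\eta}$ --- and at chains yields the matching adelic pieces.

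To promote this to a genuine equivalence of sheaves, I would package both sides as objects in $\Perf(T)^{\bZ_2}$ (using the remark that $X$ being a variety ensures coherence) and verify that the reduced coface maps agree. On the adelic K-theory side these are tensor products of pullbacks along the inclusions $X^T \hookrightarrow X^{T(x)}$ with the structural coface maps of the ad\`eles of $\cO_T$; on the periodic cyclic side the analogous coface maps are induced by the same fixed-locus inclusions via functoriality of $\cL$ and $\cHP$, together with the natural inclusions of formal neighbourhoods. Naturality in $X$ with respect to $T$-equivariant maps follows because every piece of the adelic datum is manifestly functorial in $X$.

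The main obstacle is the chain-level comparison: I must make sure that the HKR/localization isomorphism at each stratum is compatible with the restriction maps relating strata, i.e.\ that the square relating $\cL[X^{T(x)}/T]|_{\widehat{x}}$ and $\cL[X^T/T]|_{(\eta>x)}$ intertwines, after Tate, with the pullback $H_{T^\an}^{\oplus,\ast}(X^{\an,T^\an(x)}) \to H^{\oplus,\ast}(X^{\an,T^\an})$ tensored with the coface map of $\cO_T$. Once this compatibility is established, Groechenig's descent identifies the two objects and a diagram chase delivers naturality.
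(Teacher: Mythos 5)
Your overall plan coincides with the paper's proof: both sides are compared through their adelic descent data on the rank-one torus, with the K-theory side supplied by Lemma \ref{lemma:AdK}, the periodic cyclic side supplied by Chen's loop-space results, and the coface maps and naturality handled via compatibility of the HKR/localization isomorphisms with pullbacks.

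The one genuine weak spot is your treatment of the closed-point completions. You propose to identify $\pi_\ast\cHP([X/T])_{\widehat{x}}$ by (i) localizing to $\cL[X^{T(x)}/T]$ near $x$ and then (ii) factoring the loop space as $[X^{T(x)}/T(x)]\times\cL B(T/T(x))$ and running a bare HKR argument. Step (ii) is not literally correct: for rank-one $T$ and finite $T(x)=\mu_n$ the extension $1\to\mu_n\to T\to T/T(x)\to 1$ does not split, so the quotient stack does not decompose as a product in the way you use. More seriously, even after localization, the passage from the completed (Tate-constructed) Hochschild homology to the $\bZ_2$-periodized Betti cohomology of the analytification is not a formal HKR statement: the $S^1$-action induced on the completion at a point $x\neq 1$ is not the standard one, and one must know that the Tate construction nevertheless returns the expected answer --- this is precisely the content of Chen's Theorem 4.3.2 (together with his Example 4.1.6 for torus actions, cf.\ Remark \ref{remark:Tatetori}). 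The paper's proof avoids both issues by citing Chen's Theorem 4.3.2 directly for the closed points and his localization Theorem 3.2.12 for the generic point; your argument becomes complete once your factorization/HKR step is replaced by that citation, or by an argument of comparable strength controlling the $S^1$-action on the completion.
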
 
\begin{proof}
The adelic decomposition for the sheaf of rationalized equivariant K-theory has been computed in Section \ref{section:adelicsingK}. In the case of periodic cyclic homology, the adelic decomposition follows from Chen's Theorem 4.3.2 in \cite{HChen}: 
$$\pi_\ast\cHP([X/T])_{\widehat{z}}\simeq H_{T^\an}^{\oplus,\ast}(t_0 X^{T(z),\an})\otimes_{\cO(\ft)}\cO_{T,\widehat{z}}$$
and Chen's localization theorem for the loop space, Theorem 3.2.12 in \cite{HChen}:
$$\pi_\ast\cHP([X/T])_{\widehat{\eta}}\simeq H^{\oplus,\ast}(t_0 X^{T,\an})\otimes_{\cO(\ft)}\cO_{T,\eta}$$
The isomorphisms satisfy the compatibility conditions necessary to promote them to isomorphisms of cosimplicial groups. Indeed, the diagram
$$\xymatrix{
\pi_\ast\mHP([X/T])_{\widehat{z}}\ar[r]\ar[d] & H_{dR,T}^{\oplus,\ast}(t_0(X^{T(z)})^\an)\otimes_{\cO(\ft)}\cO_{T,\widehat{z}}\ar[d] \\
\pi_\ast\mHP(t_0(X^T))\otimes_{k}\Frac\cO_{T,\widehat{z}}\ar[r] & H_{dR}^{\oplus,\ast}(t_0(X^T)^\an)\otimes_{k}\Frac\cO_{T,\widehat{z}}
}
$$
commutes as a consequence of the compatibility of Chen and Ben-Zvi--Nadler's HKR theorems 4.3.2 in\cite{HChen} and Proposition 4.4 in \cite{BZNLoopConn} with pullbacks, and the diagram 
$$\xymatrix{
\pi_\ast\mHP([X/T])_{\eta}\ar[r]\ar[d] & H_{dR}^{\oplus,\ast}(t_0(X^{T})^\an)\otimes_{\cO(\ft)}\cO_{T,\eta}\ar[d] \\
\pi_\ast\mHP(t_0(X^T))\otimes_{k}\Frac\cO_{T,\widehat{z}}\ar[r] & H_{dR}^{\oplus,\ast}(t_0(X^T)^\an)\otimes_{k}\Frac\cO_{T,\widehat{z}}
}
$$
commutes as the bottom isomorphism is the tensor product of the top one with $\cO_{T,\widehat{z}}$.

Naturality of the isomorphism is a consequence of the compatibility with $T$-equivariant maps of the HKR isomorphisms provided by Theorem 4.3.2 in \cite{HChen} and Proposition 4.4 in \cite{BZNLoopConn}.
\end{proof}

Proposition \ref{thm:HPK} follows from Lemma \ref{lemma:HPK1} applying induction on the rank of $T$.
\begin{proof}[Proof of Proposition \ref{thm:HPK}]
We first focus on the analysis of chains $\Delta=(x>x_1>\dots>x_p)$ where $x_p$ is a closed point of $T$. In this case, we have an explicit description of the adelic groups in terms of singular cohomology for both $\cHP$ and $\cK$. An application of Huber's Proposition 3.2.1 in \cite{HubAd} to Theorem 4.3.2 from Chen's \cite{HChen} yields
$$\bA_{T}(\Delta,\cHP([X/T]))\simeq H_{T^\an}^{\oplus,\ast}(t_0 (X^{T(x)})^{\an})\otimes_{\cO(\ft)}\cO_{T,\Delta}$$
In the above, the subgroup $T(x)$ is determined according to Definition \ref{def:HHlin}. We also applied the localization theorem (Theorem 3.2.12 in \cite{HChen}) to further restrict from $t_0 X^{T(x_p)}$ to $t_0 X^{T(x)}$. Since $X$ is a variety over $\bC$, the equivariant cohomology of the fixed loci is always finitely generated as a module over the equivariant cohomology of the point.

For K-theory, Lemma \ref{lemma:AdK} gives the same formula:
$$\bA_{T}(\Delta,\cK_{T^\an}(X^{\an}))\simeq H_{T^\an}^{\oplus,\ast}(t_0 (X^{T(x)})^{\an})\otimes_{\cO(\ft)}\cO_{T,\Delta}$$
The cosimplicial structure is induced by pullback maps in equivariant cohomology and the coface/codegeneracy maps for the cosimplicial adelic group of $\cO_{T}$, in both cases. 

We now assume $x_p$ is not a closed point. In this situation, we apply induction on the rank of the torus $T$. Since $T$ is affine, we have decompositions
$$\bA_{T}(\Delta,\cHP([X/T]))\simeq\mHP([t_0 X^{T(x)}/T])\otimes_{\cO(T)}\cO_{T,\Delta}$$
$$\bA_{T}(\Delta,\cK_{T^\an}(X^{\an}))\simeq K_{T^\an}(t_0 (X^{T(x)})^{\an}))\otimes_{\cO(T)}\cO_{T,\Delta}$$
where we are allowed to restrict to the locus fixed by $T(x)$ by the localization theorem respectively for the loop space (Theorem 3.1.12 in \cite{HChen}) and in equivariant K-theory.

Observing that $T(x)$ acts trivially on $t_0 X^{T(x)}$, we can reduce the equivariance group to $T'(x)$:
$$\mHP([t_0 X^{T(x)}/T])\simeq\mHP([t_0 X^{T(x)}/T'(x)])\otimes_{\cO(T'(x))}\cO(T)$$
$$K_{T^\an}(t_0 (X^{T(x)})^{\an}))\simeq K_{T'(x)^\an}(t_0 (X^{T(x)})^{\an}))\otimes_{\cO(T'(x))}\cO(T)$$
Then the inductive hypothesis produces an equivalence of the adelic groups relative to the chain $\Delta$. The cosimplicial structure is controlled by the coface/codegeneracy maps of the adelic group relative to $\cO_{T}$, hence the isomorphism produced above is compatible with the cosimplicial maps induced by removing points $x_i$, $i\in \{1,\dots,p\}$, from the chain $\Delta$. The only coface map that needs to be analysed separately is the one relative to removing the point $x$ from the chain $\Delta$. In this case, naturality with respect to $X$ of the equivalence (which is assumed inductively) allows us to conclude the compatibility.
\end{proof}
\begin{remark}\label{remark:Tatetori}
Chen's Theorem 4.3.2 in \cite{HChen} requires the variety $X$ to be quasi-projective, as this assumption ensures that the Tate construction on the completions recovers the completed Betti cohomology of the analytification --- by forcing the induced $S^1$-action on such completions to be the standard one on Hochschild homology. In Example 4.1.6 Chen shows that for actions of tori, while the induced action on the completions is not the standard one, the Tate construction recovers the expected result. Thus we can omit the quasi-projectivity assumption in the statement of Proposition \ref{thm:HPK}.
\end{remark}

In their paper \cite{HLPom}, Halpern-Leistner and Pomerleano remark that their theorem 2.17 follows from a chain level identification. The same is true in our case. Indeed, we can produce a cochain-level version of rationalized equivariant K-theory via adelic descent, using the same techniques that we use to define rationalized adelic equivariant K-theory in Subsection \ref{ssec:eqvKThy}. Let us call this object $\cCK_T(X)$, for a finite $T$-CW-complex $X$. We impose its ad\`eles at a chain $\Delta=(x,x_1,\dots,x_p)$ with $x_p$ closed to be given by
$$\bA_{T}(\Delta,\cCK_T(X))=C^{\oplus,\ast}_T(X^{T(x)})\otimes_{\cO(\ft)}\bA_{T}(\Delta,\cO_T)$$
where $C^{\oplus,\ast}_T(X^{T(x)})$ are the $\bZ_2$-periodized $T$-equivariant cochains on $X^{T(x)}$ with coefficients in $k$, i.e. the $\bZ_2$-periodization of the cochains on the Borel construction $X^{T(x)}//T$. The cochains $C^{\oplus,\ast}_T(X^{T(x)})$ are a module over $C^{\oplus,0}_T(\ast)$, which is formal, hence $C^{\oplus,0}_T(\ast)\simeq\cO(\ft)$.
The perfect complex $\cCK_{T}(X)$ is then constructed following the same inductive techniques in Subsection \ref{ssec:eqvKThy}.

The same proof of Proposition \ref{thm:HPK} then proves a cochain-level variant:

\begin{proposition}\label{thm:HPKchains}
Let $X$ be a smooth variety over $\bC$ acted on by an algebraic torus $T$. There is an equivalence of objects in $\Perf(T)$
$$\cHP([X/T])\simeq\cCK_{T^{\an}}(X^{\an})$$
which is natural in $X$ with respect to $T$-equivariant maps.
\end{proposition}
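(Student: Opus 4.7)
The plan is to adapt the adelic-descent proof of Proposition~\ref{thm:HPK} to the chain level. By Groechenig's reconstruction theorem (recalled in Subsection~\ref{sec:adelic}), both $\cHP([X/T])$ and $\cCK_{T^\an}(X^\an)$ are recovered, as objects of $\Perf(T)$, from their adelic descent data. It therefore suffices to produce a natural cosimplicial equivalence of the products of local ad\`eles, and then argue (as in the proof of Theorem~\ref{thm:equivGroj}) that this restricts to an equivalence on the sub-cosimplicial objects of ad\`eles.

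For a reduced chain $\Delta=(x>x_1>\dots>x_p)$ with $x_p$ closed, Definition~\ref{def:Kn} and the discussion preceding the statement of Proposition~\ref{thm:HPKchains} give by construction
$$\bfA_{T}(\Delta,\cCK_{T^\an}(X^\an)) = C^{\oplus,\ast}_{T^\an}((X^{T(x)})^\an)\otimes_{\cO(\ft)}\bA_{T}(\Delta,\cO_T).$$
On the cyclic-homology side, I would invoke the chain-level enhancement of Chen's HKR theorem (Theorem~4.3.2 in \cite{HChen}) combined with the loop-space localization theorem (Theorem~3.2.12 in \cite{HChen}) and Huber's Proposition~3.2.1 in \cite{HubAd} to obtain the same tensor factor, after using the formality of $C^{\oplus,0}_T(\ast)\simeq\cO(\ft)$ to identify the coefficient ring. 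For chains with $x_p$ non-closed, I would proceed by induction on the rank of $T$: applying loop-space localization, one restricts to $t_0 X^{T(x)}$, and since $T(x)$ acts trivially there, the equivariance can be reduced from $T$ to $T'(x)$, yielding
$$\bfA_{T}(\Delta,\cHP([X/T])) \simeq \cHP([t_0 X^{T(x)}/T'(x)])\otimes_{\cO(T'(x))}\cO_{T,\Delta}$$
and the analogous formula for $\cCK_{T^\an}$; the inductive hypothesis then supplies the required equivalence.

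It remains to promote these local equivalences to a cosimplicial equivalence. The coface maps associated to removing a point $x_i$ with $i\geq 1$ are controlled entirely by the corresponding coface maps of $\bA_{T}^{\bullet}(\cO_T)$ tensored with the identity, so compatibility is automatic. The only subtle map is the one that removes the top point $x$ of the chain; here, naturality in $X$ of the inductive equivalence (assumed as part of the inductive package, exactly as in the proof of Proposition~\ref{thm:HPK}) together with the naturality of Chen's HKR and of the localization equivalence with respect to $T$-equivariant maps forces the relevant square to commute.

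The main obstacle I foresee is ensuring that all these identifications are genuinely coherent as a cosimplicial equivalence in $\Perf(T)$, rather than a collection of separately constructed equivalences at each chain. The cleanest route is to refrain from exhibiting explicit homotopies by hand and instead let the adelic recipe do the bookkeeping: both $\cCK_{T^\an}$ (by construction) and $\cHP([X/T])$ (via Chen's derived HKR and the localization theorems, which are formulated as equivalences of chain complexes) are assembled from the same local inputs by the same functorial, adelic gluing procedure. The requisite coherence is then a formal consequence of the naturality of these inputs under the pullback, change-of-group, and reduced-coface maps that define the cosimplicial structure in Definition~\ref{def:Kn}, and formality of $C^{\oplus,0}_T(\ast)$ removes any discrepancy between the cohomology and cochain models of the coefficient ring.
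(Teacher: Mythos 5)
Your proposal is correct and follows essentially the same route as the paper: the paper simply declares that ``the same proof of Proposition~\ref{thm:HPK} proves the cochain-level variant,'' and your argument is exactly that proof rerun at chain level --- adelic reconstruction, Chen's HKR and loop-space localization, Huber's Proposition~3.2.1, formality of $C^{\oplus,0}_T(\ast)\simeq\cO(\ft)$, and induction on the rank with the reduction from $T$ to $T'(x)$. Your closing remarks on cosimplicial coherence are handled at the same level of detail as in the paper's proof of Proposition~\ref{thm:HPK}, so nothing essential is missing.
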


\subsubsection{The case of reductive $G$}
Proposition \ref{thm:HPK} can be also stated for quotients by a reductive group $G$, as long as the variety $X$ is quasi-projective. This follows from Theorem 4.3.2 in Chen's \cite{HChen}. The quasi-projectivity hypothesis is required in the case of general reductive $G$ to ensure that the Tate construction on the ad\'eles gives the expected result (see remark \ref{remark:Tatetori}). 

\begin{theorem}\label{thm:HPKG}
Let $X$ be a smooth quasi-projective variety over $\bC$ acted on by a reductive algebraic group $G$. There is an equivalence of $\bZ_2$-periodic coherent sheaves on $G//G=\Spec \cO(G)^G$
$$\pi_\ast\cHP([X/G])\simeq\cK_{G^{\an}}(X^{\an})$$
which is natural in $X$ with respect to $G$-equivariant maps.
\end{theorem}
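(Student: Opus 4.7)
The plan is to reduce the statement to Proposition \ref{thm:HPK} (the torus case) by taking Weyl invariants. Fix a maximal torus $T \subset G$ with Weyl group $W = N_G(T)/T$. Since $G$ is reductive, Chevalley's theorem gives an isomorphism $T//W \simeq G//G$, so both sides of the desired equivalence are naturally sheaves on $T//W$, and it suffices to produce a $W$-equivariant equivalence after pullback along $q\colon T \to T//W$.

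First I would reduce each side to its torus counterpart. On the K-theory side, the classical decomposition $R(G^{\mathrm{an}}) \simeq R(T^{\mathrm{an}})^W$ extends naturally to $K_{G^{\mathrm{an}}}(X^{\mathrm{an}}) \simeq K_{T^{\mathrm{an}}}(X^{\mathrm{an}})^W$, giving $\cK_{G^{\mathrm{an}}}(X^{\mathrm{an}}) \simeq (q_* \cK_{T^{\mathrm{an}}}(X^{\mathrm{an}}))^W$. On the HP side, I would factor $[X/T] \to [X/G]$ through $[X/N_G(T)]$: the map $[X/N_G(T)] \to [X/G]$ is a $G/N_G(T)$-bundle whose fiber is a projective variety with rational cohomology concentrated in degree zero, which implies an equivalence on $\mHP$; the map $[X/T]\to [X/N_G(T)]$ is a $W$-torsor of stacks, so $\mHP([X/N_G(T)])\simeq\mHP([X/T])^W$ (in characteristic zero, homotopy invariants and strict invariants agree for the action of the finite group $W$). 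Combining these yields $\pi_*\cHP([X/G]) \simeq (q_*\pi_*\cHP([X/T]))^W$. Then Proposition \ref{thm:HPK} identifies the torus-level sheaves, and taking $W$-invariants gives the theorem; naturality in $X$ with respect to $G$-equivariant maps is inherited from the torus case.

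The main obstacle will be verifying that the two Weyl actions --- on $\pi_*\cHP([X/T])$ coming from the outer $N_G(T)/T$ symmetry of $[X/G]$, and on $\cK_{T^{\mathrm{an}}}(X^{\mathrm{an}})$ coming from the compact form $N_{G^{\mathrm{an}}}(T^{\mathrm{an}})/T^{\mathrm{an}}$ --- are intertwined by the equivalence of Proposition \ref{thm:HPK}. This amounts to extending the naturality statement of Proposition \ref{thm:HPK} from strictly $T$-equivariant maps to maps covering the automorphisms of $T$ induced by conjugation by representatives of $W$ in $N_G(T)$. I expect this to follow directly from the adelic presentation: conjugation by $N_G(T)$ permutes the stratification of $T$ by the subgroups $T(x)$ of Definition \ref{def:dictK} compatibly with the induced Weyl action on equivariant singular cohomology of fixed loci, so the identification of ad\`eles given by Lemma \ref{lemma:AdK} and Chen's Theorem 4.3.2 in \cite{HChen} intertwines the two $W$-actions chain by chain. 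The quasi-projectivity assumption on $X$ is required precisely so that Chen's theorem applies for a reductive rather than toral group (cf. Remark \ref{remark:Tatetori}).
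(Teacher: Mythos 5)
The overall skeleton --- reduce to the maximal torus and take Weyl invariants --- matches the paper, but the step where you establish $\mHP([X/G])\simeq\mHP([X/T])^W$ is exactly where your argument breaks down, and it is the step that needs real work. The claim that $[X/N_G(T)]\to[X/G]$ induces an equivalence on $\mHP$ because the fiber has the rational cohomology of a point is false. Take $X=\mathrm{pt}$ and $G=\SL_2$ with $N=N_G(T)$: then $\mHP(BG)\simeq\cO(G//G)\simeq\cO(\bA^1)$ ($\bZ_2$-periodically), while $\mHP(BN)\simeq\cO(N//N)$, and $N//N$ has an extra component coming from the non-identity component of $N$; equivalently, $R(N^c)\otimes\bC\neq R(G^c)\otimes\bC$, since the sign character of $N$ is not in the image of restriction even rationally. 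The ``fiber with trivial rational cohomology'' principle is valid for Borel-equivariant cohomology, which only sees the formal/unipotent part of the loop space, but $\mHP$ of a quotient stack sees the whole adjoint quotient, and $N//N\neq G//G$. (Also $G/N_G(T)$ is not projective --- the flag variety is $G/B$ --- though that is secondary.) Your second step fails as well: for a finite group $W$ acting on a stack $\cY$, $\mHH([\cY/W])$ is not $\mHH(\cY)^{W}$ (homotopy or strict); there are twisted sectors, e.g.\ $\mHH(B\bZ/2)\simeq\bC^2$. In the $\SL_2$ example the two errors cancel in the composite, which is why the statement you are aiming for is nonetheless true, but neither intermediate equivalence holds, so the proof does not go through.

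The paper instead constructs the comparison map $\alpha:\cHP([X/G])\to\cHP([X/T])^W$ directly from the map of stacks $[\cL[X/T]/W]\to\cL[X/G]$ (using $T//W\simeq G//G$), and proves it is an equivalence by completing at every closed point $z$ of $G//G$: Chen's Theorem 4.3.2 identifies $\pi_\ast\cHP([X/G])_{\widehat{z}}$ with completed equivariant cohomology of the fixed locus for the centralizer $G^z$, the Atiyah--Segal completion theorem identifies that with $\cK_{G^\an}(X^\an)_{\widehat{z}}$, the classical fact $K_{G^\an}(X^\an)\otimes\bC\simeq(K_{T^\an}(X^\an)\otimes\bC)^W$ passes to $W$-invariants, and Proposition \ref{thm:HPK} handles the torus side. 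Something of this sort (a pointwise-on-$G//G$ completion argument, or another genuine proof of $\pi_\ast\cHP([X/G])\simeq\pi_\ast\cHP([X/T])^W$) is what you need in place of the fibration argument. Your concern about intertwining the two $W$-actions under the equivalence of Proposition \ref{thm:HPK} is legitimate, but note that the completion-based strategy largely sidesteps it, since all identifications are checked after completing at points of $G//G$ where both actions have already been quotiented out.
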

We recall that $G^\an$-equivariant K-theory is a $\bZ_2$-periodic coherent sheaf on $G//G$, and that there are canonical maps
$$\cL[X/G]\to\cL BG\to\Spec \cO(G)^G=G//G$$

First, observe that there is a comparison map
$$\alpha:\cHP([X/G])\to\cHP([X/T])^W$$
for a maximal torus $T$ and Weyl group $W$, induced by the map of stacks
$$[\cL[X/T]/W]\to\cL[X/G]$$
Here $\cHP([X/T])^W$ denotes the $W$-invariant complex, i.e. the pushforward of $\cHP([X/T])$ along the map
$$[T/W]\to T//W$$
from the stack quotient to the GIT quotient. Moreover, we have that $T//W=G//G$.

\begin{proof}[Proof of Theorem \ref{thm:HPKG}]
In order to prove that $\alpha$ is an equivalence it suffices to show that the completions of $\alpha$ at all closed points of $G//G$ are equivalences. We recall here that such closed points are in bijection with semisimple orbits of the adjoint $G$-action on $G$. Given such a point $z$, we have the following sequence of equivalences:
$$\pi_\ast\cHP([X/G])_{\widehat{z}}\simeq H_{G^z}^{\oplus,\ast}((t_0 X^z))^\an\otimes_{\cO(\fg)^G}\cO_{G//G,\widehat{z}}\simeq\cK_{G^\an}(X^\an)_{\widehat{z}}\simeq$$
$$\simeq (\cK_{T^\an}(X^\an)^W)_{\widehat{z}}\simeq (\pi_\ast\cHP([X/T])^W)_{\widehat{z}}$$
where $G^z$ denotes the centralizer of $z$ in $G$.

In the above, the first equivalence is Theorem 4.3.2 in Chen's \cite{HChen}, the second equivalence is the Atiyah--Segal completion theorem, the third one is the classical fact that, after complexification,
$$K_{G^\an}(X^\an)\otimes_{\bZ}\bC\simeq (K_{T^\an}(X^\an)\otimes_{\bZ}\bC)^W$$
and the last equivalence is a corollary of Proposition \ref{thm:HPK}.

This proves that 
$$\pi_\ast\alpha:\pi_\ast\cHP([X/G])\to\pi_\ast\cHP([X/T])^W$$ 
is an equivalence. Then, as we have that 
$$\pi_\ast\cHP([X/T])^W\simeq\cK_{G^\an}(X^\an)$$
by Proposition \ref{thm:HPK}, we finally obtain
$$\pi_\ast\cHP([X/G])\simeq\cK_{G^\an}(X^\an)$$
\end{proof} 

\begin{remark}\label{remark:HPKGchains}
Similarly to the case of the torus, also Theorem \ref{thm:HPKG} has a chain-level refinement. Define
$$\cC\cK_{G}(X):=\cC\cK_T(X)^W$$
and the identification $\cHP([X/G])\simeq\cC\cK_{G^\an}(X^\an)$ follows immediately from Proposition \ref{thm:HPKchains} using the same arguments. Moreover, we also obtain that
$$\alpha:\cHP([X/G])\to\cHP([X/T])^W$$
is an equivalence.
\end{remark}

Theorem 2.10 in \cite{HLPom} provides and equivalence 
$$K^{top}(\Perf([X/G]))\otimes\bC\simeq K_{G^\an}(X^\an)$$
of presheaves on the category of quasi-projective $G$-schemes valued in the homotopy category of spectra. In the above, $K^{top}$ is Blanc's topological K-theory of a DG-category \cite{Blanc}.
In particular, we can combine this result with Theorem \ref{thm:HPKG} and Remark \ref{remark:HPKGchains} to obtain the following:

\begin{corollary}[lattice conjecture for quotients of varieties]\label{cor:lattice} 
Let $X$ be a smooth quasi-projective variety acted on by a reductive group $G$. Then we have an equivalence
$$K^{top}(\Perf([X/G]))\otimes\bC\simeq\mHP([X/G])$$
\end{corollary}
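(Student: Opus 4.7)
The plan is straightforward: simply chain together the two equivalences that have been prepared, one cited from \cite{HLPom} and one established in the present paper. The statement has the flavor of a compatibility result, so the work lies entirely in the two inputs, not in the corollary itself.

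First I would apply Theorem 2.10 of \cite{HLPom}, which gives an equivalence $K^{top}(\Perf([X/G]))\otimes\bC\simeq K_{G^\an}(X^\an)$ of presheaves of spectra on quasi-projective $G$-schemes. This reduces the task to identifying $K_{G^\an}(X^\an)$ with $\mHP([X/G])$. Second, I would invoke the chain-level refinement of Theorem \ref{thm:HPKG} supplied by Remark \ref{remark:HPKGchains}, which provides the equivalence $\cHP([X/G])\simeq\cCK_{G^\an}(X^\an)$ of perfect complexes on the GIT quotient $G//G$. Taking global sections on both sides recovers $\mHP([X/G])$ on the left and $\bC$-rationalized $G^\an$-equivariant topological K-theory on the right, packaged as a $\bZ_2$-periodic $\bC$-chain complex.

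Composing these two equivalences then gives the stated identification $K^{top}(\Perf([X/G]))\otimes\bC\simeq\mHP([X/G])$. The only subtlety to check is that the three invariants — Blanc's $K^{top}\otimes\bC$, $\bC$-rationalized $G^\an$-equivariant topological K-theory, and periodic cyclic homology — are compared in a common target category. Over $\bC$ all three can be viewed as $\bZ_2$-periodic $\bC$-linear complexes (Bott periodicity on the K-theory side, natural $\bZ_2$-periodicity of $\mHP$ on the other), and this alignment is already implicit in the formulations of both input theorems. Thus no genuine obstacle arises; the corollary is a bookkeeping composition.
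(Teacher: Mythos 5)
Your proposal is correct and follows essentially the same route as the paper: it combines Theorem 2.10 of \cite{HLPom} with the chain-level identification $\cHP([X/G])\simeq\cC\cK_{G^{\an}}(X^{\an})$ from Theorem \ref{thm:HPKG} and Remark \ref{remark:HPKGchains}, passing to global sections on $G//G$ to recover $\mHP([X/G])\simeq K_{G^\an}(X^\an)$ before composing. Your closing remark about comparing all three invariants as $\bZ_2$-periodic $\bC$-linear complexes matches the paper's implicit framing, so no further work is needed.
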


Corollary \ref{cor:lattice} offers a variant of the lattice conjecture that does not require that the quotient admits a semi-complete KN stratification, as opposed to Theorem 2.17 in \cite{HLPom}. The \emph{lattice conjecture} in this context means the existence of a rational structure on the periodic cyclic homology of a stable $\infty$-category.

\begin{proof}[Proof of Corollary \ref{cor:lattice}]
Theorem 2.10 in \cite{HLPom} gives an equivalence of $\bC$-module spectra
$$K^{top}(\Perf([X/G]))\otimes\bC\simeq K_{G^\an}(X^\an)$$
for smooth quasi-projective schemes $X$ acted on by algebraic groups $G$. On the other hand, Theorem \ref{thm:HPKG} and remark \ref{remark:HPKGchains} give an equivalence
$$\mHP([X/G])\simeq\Gamma(G//G,\cC\cK_{G^{\an}}(X^{\an}))=K_{G^\an}(X^\an)$$
where $\Gamma$ denotes derived global sections. In particular, we conclude
$$K^{top}(\Perf([X/G]))\otimes\bC\simeq\mHP([X/G])$$
\end{proof}

\subsection{Equivariant cohomology}\label{ssection:HPlin}
In this subsection we prove an analogue of Proposition \ref{thm:HPK} for equivariant singular cohomology, using the same technique as in the K-theory case. 
The equivalence at the level of global sections is a consequence of work of Calaque--Pantev--To\"en--Vaqui\'e--Vezzosi \cite{CPTVV}. Their work indeed proves the statement in far grater generality, for any Artin stack in characteristic zero. Our proof is tailored to quotient stacks, and for the statement at the level of sheaves.  Nevertheless, since the base space $\fg//G$ is an affine scheme, we do not get a more general theorem with our techniques. 
We include these results here for completeness. In particular, this allows us to observe that our techniques --- based on adelic descent and induction --- adapt to treat equivariant cohomology as well as equivariant K-theory and equivariant elliptic cohomology, that is discussed in the paper \cite{MapStI}.

First we define the relevant notion of Hochschild homology in this context.
\begin{definition}\label{def:HHlin}
Let $\cX$ be a derived stack. 
We define the \emph{linearized Hochschild homology} of $\cX$ as the global sections
$$\mHHl(\cX):=\cO(T_{\cX}[-1])$$
as an object of $\Mod_k$.
Let $X$ be a scheme acted on by a reductive algebraic group $G$. Let 
$$r:T_{[X/G]}[-1]\to\fg//G$$
be the structure map. We define:
$$\cHHl([X/G]):=r_{\ast}\cO_{T_{[X/G]}[-1]}$$
\end{definition}
\begin{remark}
The \emph{de Rham complex} of $\cX$, $\DR(\cX)$, is a canonical structure of graded mixed complex on the linearized Hochschild homology of $\cX$. This is explained in \cite{PTVV}.
\end{remark}
The next step is defining a periodic cyclic version of linearized Hochschild homology. The first ingredient we need is a global $B\widehat{\bG}_a$-action on the shifted tangent stack
$$T_{[X/G]}[-1]$$
which follows from work of Naef and Safronov \cite{SafNaef}.
This action will be used to perform a Tate construction similar to the procedure used in \cite{MapStI} in the context of equivariant elliptic cohomology, where the relevant action is that of the elliptic curve $E$.
In our setting, the $B\widehat{\bG}_a$-action comes from an identification 
$$T_{\cX}[-1]\simeq\Map{B\widehat{\bG}_a}{\cX}$$
for any derived stack $\cX$ over $k$, which is explained in \cite{SafNaef}. 

\begin{lemma}\label{lemma:trivaction}
Let $G$ be a smooth reductive algebraic group over $k$. The $B\widehat{\bG}_a$-action on 
$$T_{BG}[-1]$$
induces a trivial action of $B\widehat{\bG}_a$ on the affinization $\fg//G$.
\end{lemma}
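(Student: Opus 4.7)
The plan is to pass to the level of functions, translate the statement into a coaction statement, and exploit cohomological degree considerations. By the example in Subsection \ref{ssec:stbdls}, $T_{BG}[-1] \simeq [\fg/G]$ and its affinization is $\fg//G = \Spec \Sym(\fg^\vee)^G$. For $G$ reductive in characteristic zero, higher cohomology of $BG$ with coefficients in any representation vanishes, so the affinization map induces an equivalence of function rings $\cO(T_{BG}[-1]) \simeq \cO(\fg//G) \simeq \Sym(\fg^\vee)^G$, and this ring sits entirely in cohomological degree $0$.

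Since $\fg//G$ is affine, specifying a $B\widehat{\bG}_a$-action on it is equivalent to specifying an $E_\infty$-coaction
\[
\rho : \Sym(\fg^\vee)^G \longrightarrow \cO(B\widehat{\bG}_a) \otimes_k \Sym(\fg^\vee)^G,
\]
and the lemma amounts to showing $\rho(x) = 1 \otimes x$ for every $x$. The key input is the cohomological structure of $\cO(B\widehat{\bG}_a)$: over a field of characteristic zero, $\widehat{\bG}_a$ is the additive formal group and $\cO(B\widehat{\bG}_a)$ is a coconnective $E_\infty$-$k$-algebra with $H^0 = k$ (concretely, one may model it as $k[[u]]$ with $u$ in cohomological degree $2$, via the Chevalley--Eilenberg/Koszul calculus applied to the Lie algebra of $\widehat{\bG}_a$). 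Because $\rho$ is a map of $E_\infty$-algebras it preserves cohomological degree; as the source is concentrated in degree $0$, the image of $\rho$ must lie in the degree-$0$ part of the target, which is exactly $k \otimes \Sym(\fg^\vee)^G$. Composing with the counit $\cO(B\widehat{\bG}_a) \to k$ and applying the counit axiom for the coaction then forces the resulting map $\Sym(\fg^\vee)^G \to \Sym(\fg^\vee)^G$ to be the identity of $\Sym(\fg^\vee)^G$. This gives $\rho(x) = 1 \otimes x$, i.e.\ triviality of the action on $\fg//G$.

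The main obstacle is justifying the structural claim that $\cO(B\widehat{\bG}_a)$ is coconnective with $H^0 = k$. This is standard for formal affine classifying stacks in characteristic zero and is implicit in the Safronov--Naef identification $T_X[-1] \simeq \Map{B\widehat{\bG}_a}{X}$ already invoked in the text, but it is precisely the input needed for the degree argument to pin down triviality; once granted, the rest of the argument is a direct application of the counit axiom.
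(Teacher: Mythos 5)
Your route is genuinely different from the paper's: the paper identifies the induced structure on $\cO(T_{BG}[-1])\simeq\DR(BG)$ with a mixed structure via $\QCoh(BB\widehat{\bG}_a)\simeq\QCoh(BS^1)$ and then invokes To\"en's theorem that every $p$-form on $BG$ is canonically closed, whereas you want to force triviality purely from the discreteness of $\cO(\fg//G)=\Sym(\fg^\vee)^G$ (the reductivity input). That idea is viable, but two points need attention. First, your model of $\cO(B\widehat{\bG}_a)$ is incorrect: the Chevalley--Eilenberg cochains of the one-dimensional abelian Lie algebra give $\cO(B\widehat{\bG}_a)\simeq k[\epsilon]$ with $\epsilon$ exterior in cohomological degree $1$ --- this is exactly what makes $\Map{B\widehat{\bG}_a}{\Spec A}\simeq T_{\Spec A}[-1]$, since $k[\epsilon]\otimes A=A\oplus A[-1]$ is the trivial square-zero extension --- while $k[[u]]$ with $|u|=2$ is rather $\cO(B^2\widehat{\bG}_a)\simeq C^{\ast}(BS^1)$, the algebra relevant to the Tate construction. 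Your argument only uses coconnectivity and $H^0=k$, so the slip is not fatal, but it conflates $B\widehat{\bG}_a$ with its delooping.

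The genuine gap is the last step. ``$\rho$ preserves cohomological degree'' is a statement about $H^{\ast}(\rho)$; together with the counit axiom it only shows that the induced map on cohomology is the canonical inclusion $x\mapsto 1\otimes x$. Triviality of the $B\widehat{\bG}_a$-action is a homotopy-coherent assertion: you must exhibit a homotopy between $\rho$ and the trivial coaction and, beyond that, trivialize the full comodule-algebra (equivalently, the coherent action) datum, not just the binary coaction map on cohomology. This can be repaired along your lines: $E_\infty$-lifts of $\id_A$ along the counit $k[\epsilon]\otimes A\to A$ are classified by derivations $A\to A[-1]$, i.e. by $\Ext^{1}_A(\bL_A,A)$ on $\pi_0$, which vanishes because $A=\Sym(\fg^\vee)^G$ is a polynomial ring (Chevalley), so $\bL_A\simeq\Omega^1_A$ is projective in degree $0$; and since $A$ is discrete, $\mathrm{Aut}_{E_\infty}(A)$ is homotopy-discrete, so any action of the connected group stack $B\widehat{\bG}_a$ (equivalently, the corresponding $S^1$-action under $\QCoh(BB\widehat{\bG}_a)\simeq\QCoh(BS^1)$) on $A$ is trivializable. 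With those additions your argument closes and is arguably more self-contained than the paper's appeal to canonical closedness of forms on $BG$; as written, however, the counit manipulation alone does not prove the lemma.
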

\begin{proof}
This is a consequence of the fact that any differential p-form on $BG$ over a field of characteristic zero is canonically closed, which implies that the de Rham differential acts trivially. This fact is explained in Section 5 of \cite{ToenDAG}.
The $B\widehat{\bG}_a$-action on $T_{BG}[-1]$ induces a mixed structure on the global sections 
$$\cO(T_{BG}[-1])$$
as there is an identification 
$$\QCoh(BB\widehat{\bG}_a)\simeq\QCoh(BB\bG_a)\simeq\QCoh(BS^1)$$
which can be found in \cite{PreyMF}.
In particular, the triviality of the de Rham differential implies that the mixed structure on $\cO(T_{BG}[-1])$ is trivial, which in turn induces a trivial $B\widehat{\bG}_a$-action on 
$$\Spec \cO(T_{BG}[-1])\simeq\fg//G$$ 
\end{proof}

We follow the same steps as in \cite{MapStI}: Lemma \ref{lemma:trivaction} allows us to promote the action of $B\widehat{\bG}_a$ on the shifted tangent stack 
$$T_{[X/G]}[-1]$$
to an action relative to the base $\Aff{T_{BG}[-1]}\simeq\fg//G$ of the group stack $B\widehat{\bG}_a\times\fg//G$ as an object of the comma $\infty$-category 
$$(\dSt_k)_{/\fg//G}$$
This action induces on the quasi-coherent complex
$$\cHHl([X/G])$$
the structure of a comodule over the Hopf algebra object 
$$\pi_\ast\cO_{B\widehat{\bG}_a\times\fg//G}$$
internal to the $\infty$-category $\QCoh(\fg//G)$, for $\pi:B\widehat{\bG}_a\times\fg//G\to\fg//G$ the canonical projection.
This corresponds to a lift 
$$\cHHl([X/G])\in\QCoh(\fg//G)^{S^1}$$
\begin{definition}
Let $X$ be a scheme acted on by a reductive smooth algebraic group $G$. The \emph{linearized periodic cyclic homology} of $[X/G]$ 
$$\cHPl([X/G])$$
is the image of $\cHHl([X/G])$, as an object of $\QCoh(\fg//G)^{S^1}$, inside the $\bZ_2$-periodic $\infty$-category $\QCoh(\fg//G)^{\bZ_2}$.
\end{definition}
The $\bZ_2$-periodic $\infty$-category $\QCoh(\fg//G)^{\bZ_2}$ is obtained following Preygel's definition of \emph{Tate construction} for $\infty$-categories described in \cite{PreyTate}, equipping $\fg//G$ with a trivial $S^1$-action.

As a preliminary to the theorem, we need a localization formula for the shifted tangent stack of a quotient.  We prove a slightly weaker version of the formula:
\begin{proposition}\label{prop:shiftedloc}
Let $X$ be a smooth variety over $\bC$ acted on by a reductive group $G$. For any closed point $\xi$ of $\fg$ corresponding to a semisimple element, the completion at $\xi$ of the map 
$$T_{[t_0 X^{T(\xi)}/G^{T(\xi)}]}[-1]\to T_{[X/G]}[-1]$$
induced by the inclusion $t_0 X^{T(\xi)}\hookrightarrow X$ of fixed loci is an equivalence.
\end{proposition}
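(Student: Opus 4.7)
The plan is to reduce the statement to the classical Atiyah--Bott/Borel--Quillen localization in equivariant cohomology via an equivariant HKR identification. Both sides are shifted tangent stacks of quotient stacks $[Y/H]$ with $Y$ smooth and $H$ reductive, and such stacks are affine over $[Y/H]$; after pulling back to the formal disk at $\xi$ via the structure map $T_{[Y/H]}[-1] \to \fh//H$, each is determined by its algebra of global sections $\mHHl([Y/H])$. Thus the strategy is to prove the corresponding completion statement at the level of $\mHHl$, and then upgrade it to an equivalence of derived stacks by affineness.

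To set up the algebra side, I would use that by Definition \ref{def:HHlin}, $\mHHl([Y/H]) = \cO(T_{[Y/H]}[-1])$, with its $\cO(\fh)^H = \cO(\fh//H)$-algebra structure inherited from the map $T_{[Y/H]}[-1] \to T_{BH}[-1] \simeq [\fh/H]$ discussed in Section \ref{ssec:stbdls}. For smooth $Y$ with reductive $H$-action, the classical (non-equivariant) HKR applied to $Y$ followed by taking $H$-invariants (which is exact since $H$ is reductive) identifies $\mHHl([Y/H])$ with the Cartan/Weil model of equivariant de Rham cohomology, and via GAGA with the Borel-equivariant singular cohomology $H^*_{H^\an}(Y^\an)$ as a graded $\cO(\fh//H)$-algebra. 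At this point I would invoke the classical localization theorem together with Atiyah--Segal completion: the completion of $H^*_{G^\an}(X^\an)$ along the orbit of the semisimple $\xi$ in $\fg//G$ is computed by the completion of $H^*_{G^{T(\xi),\an}}((X^{T(\xi)})^\an)$ at $\xi \in \fg^{T(\xi)}//G^{T(\xi)}$. This is the reductive analogue of Lemma \ref{lemma:AdSC}, with $T(\xi)$ the Zariski closure of the one-parameter subgroup generated by $\xi$ (so that $G^{T(\xi)}$ is a Levi subgroup). On the source of the map in question, $t_0 X^{T(\xi)}$ is already $T(\xi)$-fixed, so the analogous completion statement is tautological; combining the two yields an equivalence of $\xi$-completed algebras.

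To upgrade to derived stacks, I would use that $T_{[Y/H]}[-1]$ is affine over $[Y/H]$ and hence the pullback $T_{[Y/H]}[-1] \times_{\fh//H} \Spec \cO_{\fh//H, \widehat{\xi}}$ is controlled entirely by its algebra of functions, which is precisely the $\xi$-completion of $\mHHl([Y/H])$; the algebra-level equivalence then translates directly into the desired equivalence of derived stacks. The main obstacle I expect is ensuring naturality: one must check that the comparison map $T_{[t_0 X^{T(\xi)}/G^{T(\xi)}]}[-1] \to T_{[X/G]}[-1]$ induced by $t_0 X^{T(\xi)} \hookrightarrow X$ corresponds, under HKR, to the restriction map in equivariant cohomology that is precisely the map shown to be an equivalence after $\xi$-completion by Atiyah--Bott localization. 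A secondary subtlety is making sure that the equivariant HKR identification for reductive (non-abelian) $H$ is compatible with the base change to $[\fh/H]$ coming from the canonical map $T_{[Y/H]}[-1] \to T_{BH}[-1]$, which is where the grading/algebra structure is controlled.
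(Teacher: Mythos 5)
Your reduction to a statement about global sections cannot prove the proposition as stated, and the step you use to ``upgrade'' is exactly where the argument breaks. The shifted tangent stack $T_{[X/G]}[-1]$ is affine over the quotient stack $[X/G]$, not over its affinization $\fg//G$; hence its completion along the fibre over $\xi\in\fg//G$ is a formal \emph{quotient stack} and is not determined by its algebra of global functions. Already for $X=\mathrm{pt}$ one has $T_{BG}[-1]\simeq[\fg/G]$, whose global functions are $\cO(\fg)^G=\cO(\fg//G)$, yet $[\fg/G]\not\simeq\fg//G$. So an isomorphism of $\xi$-completed $\mHHl$-algebras (even granting it) does not yield the asserted equivalence of completed derived stacks. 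The paper's proof is genuinely stack-level: it transports the problem to the derived loop space via the formal exponential/Chern character equivalence $T_{[X/G]}[-1]_{\widehat{0}}\simeq\cL[X/G]_{\widehat{1}}$ of Ben--Zvi--Nadler, translates by $z=\exp(\xi)$, applies Chen's localization theorem for completed loop spaces (a statement about stacks, not about their functions), and then handles the discrepancy between $X^{T(\xi)}$ and $X^{T(z)}$ by a direct orbit analysis showing the complement contributes nothing over the formal neighbourhood of $\xi$. None of these steps factors through global sections, and your affineness argument does not substitute for them.

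There is also a gap earlier in your chain of identifications: $\mHHl([X/G])=\cO(T_{[X/G]}[-1])=\Gamma([X/G],\Sym\,\bL_{[X/G]}[1])$ is an equivariant Hodge-type invariant carrying no differential, so identifying it ``via HKR, invariants and GAGA'' with Borel-equivariant singular cohomology $H^\ast_{G^{\an}}(X^{\an})$ is not correct as stated; that comparison requires the mixed structure (the $B\widehat{\bG}_a$-action), the Tate/$\bZ_2$-periodicization, and a completion, and in the loop-space setting it is precisely the nontrivial content of Chen's Theorem 4.3.2 rather than a formal consequence of HKR. If all you want is the localization of $\xi$-completed global sections, you could argue coherently with $\Sym\,\bL_{[X/G]}[1]$ and supports (a reductive analogue of Lemma \ref{lemma:AdSC}), and that weaker statement is indeed what feeds into the adelic computations later (e.g. Theorem \ref{thm:HPlHG}); but it is strictly weaker than the equivalence of completed stacks claimed in Proposition \ref{prop:shiftedloc}, which your proposal does not establish.
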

\begin{proof}
This fact is a consequence of the localization formula for the loop space, Theorem 3.1.12 of \cite{HChen}, and the non-equivariant Chern character. Choose $\xi$ a closed point in $\fg$ corresponding to a semisimple element and consider the analytic exponential map
$$\exp:\fg\to G$$
Call $z=\exp(\xi)$.
Localization for the loop space $\cL[X/T]$ implies that we have an equivalence
$$T_{[t_0 X^{T(z)}/G^{T(z)}]}[-1]_{\widehat{\xi}}\simeq T_{[X/G]}[-1]_{\widehat{\xi}}$$
Indeed, as explained in \cite{BZNLoopConn}, the completion at the respective identities of $\fg$ and $G$, i.e. the non-equivariant Chern character 
$$T_{[X/G]}[-1]_{\widehat{0}}\simeq\widehat{T}_{[X/G]}[-1]\simeq\widehat{\cL}[X/G]\simeq\cL[X/G]_{\widehat{1}}$$
is an equivalence.
Moreover, multiplication by points on the respective base --- obtained via the following pullback diagram
$$\xymatrix{
\cL[X/G]_{\widehat{1}}\ar[r]^{\sim}\ar[d] & \cL[X/G]_{\widehat{z}}\ar[d] \\
T_{\widehat{1}}\ar[r]_{\widehat{\mu}_z}^{\sim} & T_{\widehat{z}}
}
$$
is also an equivalence, natural in $[X/G]$, and similarly for the shifted tangent stack. Finally, we apply the localization on the loop space:
$$\cL[t_0 X^{T(z)}/G^{T(z)}]_{\widehat{z}}\simeq T_{[t_0 X^{T(z)}/G^{T(z)}]}[-1]_{\widehat{\xi}}\simeq T_{[X/G]}[-1]_{\widehat{\xi}}\simeq\cL[X/G]_{\widehat{z}}$$ 
The next step is to show that the natural map
\begin{equation}\label{eq:shifteddifference}
T_{[t_0 X^{T(\xi)}/G^{T(\xi)}]}[-1]_{\widehat{\xi}}\to T_{[t_0 X^{T(z)}/G^{T(z)}]}[-1]_{\widehat{\xi}}
\end{equation}
induced by the inclusion $t_0 X^{T(\xi)}\hookrightarrow t_0 X^{T(z)}$ is an equivalence (recall that $T(\xi)\geq T(z)$). Let us analyse the complement $Y=t_0 X^{T(z)}-t_0 X^{T(\xi)}$. Call $S$ the set
$$S=\{K\mbox{ proper subgroup of }T(\xi)\mbox{ containing }T(z)\}$$
We can decompose $Y$ as a union of orbits 
$$Y=\bigcup_{K\in S} \cO_K$$
$Y$ has an action of the centralizer $G^{T(\xi)}$, and every orbit $\cO_K$ is isomorphic to the coset $G^{T(\xi)}/K$. In particular, the shifted tangent stack of $[Y/G^{T(\xi)}]$ decomposes as 
$$T_{[Y/G^{T(\xi)}]}[-1]\simeq\bigcup_{K\in S}T_{\cO_K/K}[-1]\simeq\bigcup_{K\in S} T_{BK}[-1]\simeq\bigcup_{K\in S} [\mathrm{Lie}(K)/K]$$
The conclusion is that the map \eqref{eq:shifteddifference} is an equivalence if $\xi$ does not belong to $\bigcup_{K\in S} \mathrm{Lie}(K)//K$. If $\xi$ belongs to $\mathrm{Lie}(K)$ for some $K\in S$, it follows that $\mathrm{Lie}(T(\xi))\subset \mathrm{Lie}(K)$, as $T(\xi)$ is the smallest subgroup of $T<G$ such that $\xi\in\mathrm{Lie}(T(\xi))$. But if $K\in S$, $K$ is a proper subgroup of $T(\xi)$, hence $\xi$ cannot belong to $\mathrm{Lie}(K)$ for any $K\in S$. 

\end{proof}

We can finally prove the following proposition:
\begin{proposition}\label{thm:HPlH}
Let $X$ be a smooth variety over $\bC$ acted on by an algebraic torus $T$. There is an isomorphism of $\bZ_2$-periodic perfect complexes on $\ft$
$$\cHPl([X/T])\simeq\cC\cH_{T^{\an}}(X^{\an})$$
where on the left-hand side we have the homotopy sheaves of linear periodic cyclic homology, and on the right-hand side the perfect complex on $\ft=\Spec H^{\oplus,0}_{T^\an}(\ast)\simeq\Spec C^{\oplus,0}_{T^\an}(\ast)$ whose global sections are the equivariant singular cochains $C^{\oplus,\ast}_{T^\an}(X^\an)$.
\end{proposition}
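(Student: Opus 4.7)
Following the pattern established for K-theory in the proof of Proposition \ref{thm:HPK}, the plan is to show that both $\cHPl([X/T])$ and $\cC\cH_{T^\an}(X^\an)$ admit matching adelic decompositions on $\ft$, and then invoke Groechenig's adelic reconstruction. Concretely, I would work chain-by-chain on $\ft$ and treat separately the chains terminating at a closed point (where an explicit singular-cochains description is available on both sides) and the chains whose minimal point is non-closed (where induction on the rank of $T$ takes over).

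The first step would be to handle a chain $\Delta = (\xi > \xi_1 > \dots > \xi_p)$ with $\xi_p$ closed. On the cochain side, Lemma \ref{lemma:AdSC} (together with its straightforward cochain upgrade, valid by the formality of $C^{\oplus,0}_{T^\an}(\ast)\simeq\cO(\ft)$ in characteristic zero) gives
\[
\bA_\ft(\Delta, \cC\cH_{T^\an}(X^\an)) \simeq C^{\oplus,\ast}_{T^\an}((X^{T(\xi)})^\an) \otimes_{\cO(\ft)} \bA_\ft(\Delta, \cO_\ft).
\]
On the shifted-tangent side, I would combine Proposition 3.2.1 of \cite{HubAd} with the localization formula for the shifted tangent stack (Proposition \ref{prop:shiftedloc}) to replace $X$ by $t_0 X^{T(\xi)}$ after completion at $\xi_p$, and then invoke the non-equivariant HKR theorem of Ben-Zvi--Nadler (Proposition 4.4 in \cite{BZNLoopConn}) together with the trivial $B\widehat{\bG}_a$-action on $\ft$ afforded by Lemma \ref{lemma:trivaction} to identify the Tate construction with the $\bZ_2$-periodized de Rham cohomology of $(t_0 X^{T(\xi)})^\an$ equivariantly over $\ft$. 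Via the analytic de Rham comparison this produces the same formula as on the cochain side.

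For a chain whose final point $\xi_p$ is not closed, I would induct on the rank of $T$. Since $X$ is a variety and $T$ is affine, the inductive step uses the decomposition
\[
\bA_\ft(\Delta, \cHPl([X/T])) \simeq \mHPl([t_0 X^{T(\xi)}/T]) \otimes_{\cO(\ft)} \cO_{\ft, \widehat{\Delta}},
\]
obtained from Proposition \ref{prop:shiftedloc}, together with the observation that $T(\xi)$ acts trivially on $t_0 X^{T(\xi)}$, which reduces the equivariance group to the lower-rank torus $T'(\xi)$. The analogous reduction on the cochain side is provided by the adelic definition from Subsection \ref{ssec:chains}, so the inductive hypothesis finishes the identification of ad\`eles. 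Compatibility with the cosimplicial structure follows from naturality in $X$ of the non-equivariant HKR equivalence and of the localization formula, under pullback along closed inclusions of fixed loci, exactly as in the proof of Proposition \ref{thm:HPK}.

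The main obstacle I expect is ensuring that the Tate construction of the shifted tangent stack computes $\bZ_2$-periodized equivariant cohomology at each completion with the correct equivariance over the cosimplicial base. This is precisely where Lemma \ref{lemma:trivaction} is used: the trivial $B\widehat{\bG}_a$-action on $\ft$ ensures that the mixed structure on $\cO(T_{[X/T]}[-1])$ descends compatibly along each completion so that, after periodization, one recovers classical Betti cohomology of the fixed-point locus. Once this compatibility is in place, the remainder of the argument is bookkeeping of cosimplicial data parallel to the proof of Proposition \ref{thm:HPK}.
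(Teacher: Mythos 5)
Your proposal follows essentially the same route as the paper's proof: an adelic, chain-by-chain comparison on $\ft$ driven by the localization formula for the shifted tangent stack (Proposition \ref{prop:shiftedloc}), with explicit equivariant-cochain descriptions for chains ending at closed points and induction on the rank of $T$ (plus the same naturality argument for the cosimplicial structure) for the rest. The extra detail you supply on identifying the Tate construction with $\bZ_2$-periodized equivariant cohomology via the Ben-Zvi--Nadler HKR equivalence and Lemma \ref{lemma:trivaction} is exactly what the paper leaves implicit by referring back to the proof of Proposition \ref{thm:HPK}.
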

\begin{proof}
The proof of this proposition is the same as for Proposition \ref{thm:HPK}, so we give a very short treatment. It is based on the localization formula for the shifted tangent of a quotient, Proposition \ref{prop:shiftedloc}, and on the same inductive argument of the proof of Proposition \ref{thm:HPK}. In this situation, the localization theorem for the shifted tangent stack of $[X/T]$ implies
$$\bA_{\ft}(\Delta, \cHPl([X/T]))=\mHP([t_0 X^{T(\xi)}/T])\otimes_{\cO(\ft)}\cO_{\ft,\widehat{\Delta}}$$
for a chain $\Delta=(\xi>\xi_1>\dots>\xi_0)$ on $\ft$, where $\xi_0$ is a closed point. In this context, the group $T(\xi)$ is the one appearing in Definition \ref{def:dictH}.
On the other hand, for the complex $\cH_{T^{\an}}(X^{\an})$ we have
$$\bA_{\ft}(\Delta, \cH_{T^{\an}}(X^{\an}))=C^{\oplus,\ast}_{T^\an}((t_0 X^{T(\xi)})^\an)\otimes_{\cO(\ft)}\cO_{\ft,\widehat{\Delta}}$$

If $\xi_0$ is not a closed point, and to deal with the cosimplicial structure, we use the exact same inductive argument as in the proof of Proposition \ref{thm:HPK}. Indeed, if $\mathrm{rk}(T)=1$ the conclusion is immediate.
\end{proof}

\begin{remark}
As explained in Remark \ref{remark:Tatetori}, we can omit the quasi-projectivity assumption on $X$ in the statement of Proposition \ref{thm:HPlH}.
\end{remark}

\subsubsection{The case of reductive $G$}
Similarly to Proposition \ref{thm:HPK}, we can state Proposition \ref{thm:HPlH} for quotients by a reductive group $G$ if the variety $X$ is quasi-projective. To do so, we only need to replace the loop space with the shifted tangent bundle in the statements and arguments that led to the proof of Proposition \ref{thm:HPKG}. The result is

\begin{theorem}\label{thm:HPlHG}
Let $X$ be a smooth quasi-projective variety over $\bC$ acted on by a reductive algebraic group $G$. There is an equivalence of $\bZ_2$-periodic coherent sheaves on $\fg//G=\Spec \cO(\fg)^G$
$$\pi_\ast\cHPl([X/G])\simeq\cH_{G^{\an}}(X^{\an})$$
which is natural in $X$ with respect to $G$-equivariant maps.
\end{theorem}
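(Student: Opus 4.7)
The plan is to mirror the proof of Theorem \ref{thm:HPKG}, substituting the shifted tangent stack for the loop space and equivariant singular cohomology for equivariant K-theory throughout. First I would construct a natural comparison map
$$\alpha:\cHPl([X/G])\to\cHPl([X/T])^W$$
for a maximal torus $T\subset G$ with Weyl group $W$, induced by the map of derived stacks $[T_{[X/T]}[-1]/W]\to T_{[X/G]}[-1]$ and the identification $\fg//G\simeq\ft//W$, so that pushforward along $[\ft/W]\to\ft//W$ lands in $\QCoh(\fg//G)^{\bZ_2}$ on the right-hand side.

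To show $\alpha$ is an equivalence, it suffices to check completions at closed points of $\fg//G$, which are in bijection with semisimple $G$-orbits in $\fg$. For such a point $\xi$, I would string together the following sequence of equivalences: the localization formula for the shifted tangent stack (Proposition \ref{prop:shiftedloc}), applied on the centralizer $G^\xi$ of $\xi$, gives
$$\pi_\ast\cHPl([X/G])_{\widehat{\xi}}\simeq H_{G^{\xi,\an}}^{\oplus,\ast}((t_0 X^{T(\xi)})^{\an})\otimes_{\cO(\fg)^G}\cO_{\fg//G,\widehat{\xi}};$$
the Atiyah--Bott completion theorem for Borel equivariant cohomology then identifies this completion with $\cH_{G^\an}(X^\an)_{\widehat{\xi}}$; the classical rationalized identification $H_{G^\an}^{\oplus,\ast}(X^\an)\simeq H_{T^\an}^{\oplus,\ast}(X^\an)^W$ rewrites the latter as $(\cH_{T^\an}(X^\an)^W)_{\widehat{\xi}}$; and finally Proposition \ref{thm:HPlH} combined with the compatibility of the $W$-invariants with completion yields $(\pi_\ast\cHPl([X/T])^W)_{\widehat{\xi}}$. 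Naturality in $X$ propagates through every step, since each equivalence is functorial in $G$-equivariant maps.

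The main obstacle I expect is ensuring that Proposition \ref{prop:shiftedloc} extends cleanly to reductive $G$ at a semisimple $\xi$: one must verify that replacing the abelian torus $T$ of the proof by the reductive centralizer $G^\xi$ does not disturb the decomposition of $t_0 X^{T(\xi)}$ into orbits needed to rule out stray components in the shifted tangent bundle. A secondary delicate point is that the Tate construction used to define $\cHPl$ depends on the $B\widehat{\bG}_a$-action interacting correctly with $W$-invariants and completions; one must check, in the spirit of Remark \ref{remark:Tatetori} and Example 4.1.6 of \cite{HChen}, that the mixed structure induced after completing at $\xi$ is the standard one, so that the $\bZ_2$-periodic image really computes completed Borel cohomology. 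Once these points are addressed, assembling the completion-level equivalences gives $\pi_\ast\alpha$ is an equivalence, and the theorem follows by composing with Proposition \ref{thm:HPlH}.
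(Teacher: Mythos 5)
Your proposal follows essentially the same route as the paper: construct the comparison map $\alpha:\cHPl([X/G])\to\cHPl([X/T])^W$ from the stack map $[T_{[X/T]}[-1]/W]\to T_{[X/G]}[-1]$, check it is an equivalence on completions at closed (semisimple) points of $\fg//G$ via the localization formula of Proposition \ref{prop:shiftedloc} and the identification $\cH_{G^\an}\simeq\cH_{T^\an}^W$, then conclude with Proposition \ref{thm:HPlH}. Note only that Proposition \ref{prop:shiftedloc} is already stated for reductive $G$, so the extension you flag as a potential obstacle is not needed, and the Tate-construction subtlety is exactly why the quasi-projectivity hypothesis appears in the statement.
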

\begin{proof}
The proof is the same as in the case of Theorem \ref{thm:HPKG}. We have a comparison map
$$\cHPl([X/G])\to\cHPl([X/T])^W$$ 
induced by a map at the level of stacks 
$$[T_{[X/T]}[-1]/W]\to T_{[X/G]}[-1]$$
The completions at closed points of $\fg//G$ are computed by the localization formula (Proposition \ref{prop:shiftedloc}):
$$\pi_\ast\cHPl([X/G])_{\widehat{\xi}}\simeq H_{G^{T(\xi)}}^{\oplus,\ast}((t_0 X^{T(\xi)}))^\an\otimes_{\cO(\fg)^G}\cO_{\fg//G,\widehat{\xi}}\simeq\cH_{G^\an}(X^\an)_{\widehat{\xi}}\simeq$$
$$\simeq (\cH_{T^\an}(X^\an)^W)_{\widehat{\xi}}\simeq (\pi_\ast\cHPl([X/T])^W)_{\widehat{\xi}}$$
and this allows us to conclude that 
$$\pi_\ast\cHPl([X/G])\xrightarrow{\sim}\pi_\ast\cHPl([X/T])^W$$
from which the theorem follows from an application of Proposition \ref{thm:HPlH}.
\end{proof}
\begin{remark}
Similarly to the case of $\cHP$ it also holds that 
$$\cHPl([X/G])\xrightarrow{\sim}\cHPl([X/T])^W$$ 
\end{remark}

\end{document}